\theoremstyle{plain}
\newtheorem{theorem}{Theorem}
\newtheorem{proposition}[theorem]{Proposition}
\newtheorem{lemma}[theorem]{Lemma}
\theoremstyle{remark}
\newtheorem{assumption}[theorem]{Assumption}
\newtheorem{conjecture}[theorem]{Conjecture}
\newcommand{\tinyspace}{\hspace{0.06em}}
\DeclareMathOperator*{\argmax}{arg\tinyspace{}max}
\newcommand{\bigO}{\ensuremath{\mathrm{O}}}
\newcommand{\bigOp}[1][\P]{\ensuremath{\bigO_{\scriptscriptstyle{}#1}}}
\newcommand{\smallO}{\ensuremath{\mathrm{o}}}
\newcommand{\smallOp}[1][\P]{\ensuremath{\smallO_{\scriptscriptstyle{}#1}}}
\newcommand{\bigOmega}{\ensuremath{\mathrm{\Omega}}}
\newcommand{\bigOmegap}[1][\P]{\ensuremath{\bigOmega_{\scriptscriptstyle{}#1}}}
\newcommand{\sss}{\scriptscriptstyle}
\DeclareRobustCommand{\HOF}[3]{#2} 
\begin{document}
\numberwithin{theorem}{section}

\renewcommand{\P}{\ensuremath{\mathbb{P}}}
\newcommand{\E}{\ensuremath{\mathbb{E}}}
\newcommand{\V}{\ensuremath{\mathbb{E}}}
\newcommand{\R}{\ensuremath{\mathbb{R}}}
\newcommand{\N}{\ensuremath{\mathbb{N}}}
\newcommand{\e}{\ensuremath{\mathrm{e}}}
\renewcommand{\epsilon}{\varepsilon}


\begin{frontmatter}
\title{Detecting a late changepoint\\in the preferential attachment model}
\runtitle{Detecting a late changepoint in the preferential attachment model}

\begin{aug}
\author[A]{\fnms{Gianmarco}~\snm{Bet}\ead[label=e1]{gianmarco.bet@unifi.it}\orcid{0000-0001-8431-0636}},
\author[B]{\fnms{Kay}~\snm{Bogerd}\ead[label=e2]{kaybogerd@hotmail.com}}
\author[B]{\fnms{Rui~M.}~\snm{Castro}\ead[label=e3]{rmcastro@tue.nl}\orcid{0000-0003-4398-0718}}
\author[B]{\fnms{Remco}~\snm{van~der~Hofstad}\ead[label=e4]{r.w.v.d.hofstad@tue.nl}\orcid{0000-0003-1331-9697}}
\address[A]{Department of Mathematics and Computer Science, University of Florence\printead[presep={,\ }]{e1}}
\address[B]{Department of Mathematics and Computer Science, Eindhoven University of Technology\printead[presep={,\ }]{e2,e3,e4}}
\end{aug}

\begin{abstract}
Motivated by the problem of detecting a change in the evolution of a network, we consider the preferential attachment random graph model with a \emph{time-dependent} attachment function. Our goal is to detect whether the attachment mechanism changed over time, based on a single snapshot of the network and without directly observable information about the dynamics. We cast this question as a hypothesis testing problem, where the null hypothesis is a preferential attachment model with a constant affine attachment parameter $\delta_0$, and the alternative hypothesis is a preferential attachment model where the affine attachment parameter changes from $\delta_0$ to $\delta_1$ at an unknown changepoint time $\tau_n$. For our analysis we focus on the regime where $\delta_0$ and $\delta_1$ are fixed, and the changepoint occurs close to the observation time of the network (i.e., $\tau_n = n - c n^\gamma$ with $c>0$ and $\gamma \in (0, 1)$). This corresponds to the relevant scenario where we aim to detect the changepoint shortly after it has happened.

We present two tests based on the number of vertices with minimal degree, and show that these are asymptotically powerful when $ \tfrac{1}{2} < \gamma < 1$. We conjecture that there is no powerful test based on the final network snapshot when $\gamma<\tfrac{1}{2}$. The first test we propose requires knowledge of $\delta_0$. The second test is significantly more involved, and does not require the knowledge of $\delta_0$ while still achieving the same performance guarantees. Furthermore, we prove that the test statistics for both tests are asymptotically normal, allowing for accurate calibration of the tests. This is demonstrated by numerical experiments, that also illustrate the finite sample test properties.
\end{abstract}

\begin{keyword}[class=MSC]
\kwd[Primary ]{05C80} 
\kwd[; secondary ]{60F05} 
\kwd{62M02} 
\kwd{60C05} 
\end{keyword}

\begin{keyword}
\kwd{Preferential attachment model}
\kwd{Changepoint detection}
\end{keyword}

\end{frontmatter}
 \tableofcontents %


\section{Introduction}
\label{sec:changepoint_detect_introduction}
One of the most celebrated successes of complex network theory has been the recognition that simple \emph{dynamical} random graph models with local connection rules are able to successfully explain important macroscopic features observed in real-world networks. The preferential attachment model and its generalizations are perhaps the most successful of such models. Barab\'{a}si and Albert \cite{Barabasi1999} proposed this model to explain the occurrence of power-law degree sequences, which are often observed in real-world networks such as the world wide web \cite{Broder2000,Adamic2000} or internet \cite{Faloutsos1999}, biological networks \cite{Jeong2000,Farkas2003,Middendorf2005}, collaboration networks of movie actors \cite{Albert2000,Gao2011}, and citation networks \cite{Redner1998,Newman2001,Barabasi2002,Wang2008}.
Furthermore, the typical distance between vertices in the preferential attachment model is small \cite{DomHofHoo10} (see also \cite[Chapter 8]{VanderHofstad2020} and references therein). This is called the \emph{small-world} phenomenon \cite{Watts1998,Watts1999}.

The preferential attachment model considers the entire evolution of a network by adding vertices one by one using a simple \emph{preferential attachment} rule. Informally, as new vertices are added to the graph, they are more likely to attach to vertices that already have a large degree, hence further increasing the degree of these vertices. This formalism essentially creates a paradigm where ``the rich get richer'', which is often invoked to explain the wide-spread inequality in socio-economic contexts \cite{perk:2014}. Accordingly, the degree of the oldest vertices grows as new vertices attach to the graph. On the other hand, the degree of the last few vertices to join is typically quite small. Since its introduction in \cite{Barabasi1999}, the preferential attachment model has received a tremendous amount of attention thanks to its early explanatory successes. The structural properties of the model are investigated formally in \cite{Bollobas2001a,Bollobas2004}, see also \cite{VanderHofstad2017,VanderHofstad2020} for a detailed overview on this model and many of its properties. 

In our work we are interested in situations where the growth dynamics of the network do not remain constant over time, but have a change at some point. This captures a situation where a major event could cause a change in the subsequent evolution of the network. To model this, we consider a time-inhomogeneous affine preferential attachment model, where a new vertex $v_t$ that enters the graph at time $1 \leq t \leq n$ connects to a pre-existing vertex with degree $k$ with probability proportional to $f(k) = k + \delta(t)$. We consider the hypothesis testing problem where $\delta(t) = \delta_0$ remains constant under the null hypothesis, whereas under the alternative the affine attachment parameter $\delta(t)$ changes from $\delta_0$ to $\delta_1$ at an unknown moment $\tau_n$, called a \emph{changepoint}. For our work we are particularly interested in scenarios where the change occurs very late, and affects only a very small part of the graph. Specifically, in the regime we are interested in, the changepoint has the form $\tau_n = n - c n^\gamma$ with $c > 0$ and $\gamma \in (0, 1)$, as explained in Section~\ref{sec:model}. From a practical standpoint this is relevant when one wants to detect the change as quickly as possible.

\subsection{Related work}
Our work nicely complements earlier results \cite{Bhamidi2018,Banerjee2018} that focus on the detection of a changepoint in the setting of preferential attachment trees, where every vertex that enters the graph connects to $m = 1$ other vertices. There are also some differences. First, our results consider the more general case of preferential attachment graphs, where vertices may enter the graph with $m \geq 1$ edges. The other difference is that we focus on a \textit{late} changepoint $\tau_n = n - c n^\gamma$, whereas \cite{Bhamidi2018,Banerjee2018} focus on a changepoint that happens at a linear time $\bigO(n)$ or even $\smallO(n)$. Thus, in our setting a much smaller number of vertices enter the graph after the changepoint, making it harder to detect. The authors of \cite{Cirkovic2022} propose a likelihood-ratio testing procedure to detect a changepoint in a preferential attachment tree and the associated changepoint estimator. Crucially the methods in \cite{Cirkovic2022} rely on the knowledge of the entire network evolution. This is not the case for our test, which only requires a snapshot of the network at the final time. The authors of \cite{Cirkovic2022} extend their test to detect multiple changepoints by applying two general techniques (namely, Screening and Ranking, and Binary Segmentation) to decompose the multiple-changepoints problem into a sequence of single changepoint problems. This work, however, is still in the scenario where the changepoint occurs at a linear time, in stark contrast with the regime we investigate. 

Although different from this work, there has been much interest in understanding and detecting the effect of an initial seed graph on the evolution of the preferential attachment tree \cite{Bubeck2015,Curien2015,Bubeck2017,Bubeck2017a,Marchand2020}. Here one starts with a given initial graph at time $t = 1$ and then grows the remaining tree according to a preferential (or uniform) attachment. The goal is to estimate the initial seed graph based on an observation of the fully developed graph at a much later time. Finally, changepoint detection and related inference questions have also received much attention in the setting of dynamic stochastic block models \cite{Wang2014,Wang2018,Pensky2019,Zhao2019,Bhattacharjee2020}. In those works the aim is primarily to understand the evolution of the network's community structure.

\section{Model}\label{sec:model}
We formalize the problem of detecting a changepoint in a dynamical network as a hypothesis testing problem on random graphs. We first explain the model that we use in general, and then define concrete versions of this model for the null and alternative hypothesis.
This model has parameters $m\in\N$ and $\delta:\N\to(-m,\infty)$ and produces a sequence of undirected graphs without loops. Let $G_n = (V_n, E_n)$ be an undirected graph, where $V_n = \{v_0, \ldots, v_n\}$ denotes the vertex set and $E_n \subseteq \{(i,j) : i, j \in V_n\}$ denotes a random set of edges. Note that $G_n$ has $n+1$ vertices. For $v\in V_n$ let $D_{v}(G_n)$ denote the degree of vertex $v$ in the graph $G_n$.

There exist various versions of the preferential attachment model, each following slightly different conventions for adding new vertices. Here we consider the following model: the first graph $G_1$, also called the seed graph, consists of two vertices $v_0$ and $v_1$ connected by $m$ edges. For $t>1$, the graph $G_t$ is constructed by taking $G_{t-1}$ and adding one extra vertex $v_t$, that is connected to the vertices in $G_t$ by exactly $m$ edges. In the model we consider this process is better described by introducing a number of intermediate steps, described by graphs $G_{t,0}, G_{t,1}, \ldots G_{t,m}$, with vertex set $V_t=\{v_0,\ldots,v_t\}$. Begin by defining $G_{t,0}$ to be identical to $G_{t-1}$ together with an isolated vertex $v_t$. The graph $G_{t,1}$ is obtained by adding an edge between $v_t$ and one of the vertices in $V_{t-1}$ with probability proportional to $D_v(G_{t,0})+\delta(t)$. In general, for $i\in[m]$, we proceed by sampling vertex $v_{t,i}\in\{v_0,\ldots,v_{t-1}\}$ with conditional probability
\begin{equation}\label{eq:attachment_function_general}
\P\left(v_{t,i} = v\mid G_{t,i-1}\right)=\frac{D_{v}(G_{t,i-1})+\delta(t)}{\sum_{j=0}^{t-1} D_{j}(G_{t,i-1})+\delta(t)}\ ,
\end{equation}
and constructing $G_{t,i}$ by adding the edge $\{v_{t,i},v_t\}$ to $G_{t,i-1}$. Finally, define $G_t=G_{t,m}$. Note that the degree of $v_t$ in $G_t$ is exactly $m$.

The above model is rather general, as it allows for quite a bit of flexibility in terms of the function $\delta(t)$, as the only requirement is that $\delta(t)>-m$ to ensure that \eqref{eq:attachment_function_general} is indeed a valid probability. A classical choice is to take $\delta(t)$ as a constant, and the properties of the ensuing graphs are well studied (e.g., see \cite{VanderHofstad2017,VanderHofstad2020} and the references therein). However, we are interested in knowing when it is possible to distinguish graphs generated by a model where $\delta(t)$ is constant versus graphs generated by a model where $\delta(t)$ is a step function. The latter models a preferential attachment evolution, where at some point the characteristics of the attachment process change.

Consider the above model, and let $G_n$ denote the ``last'' graph obtained. This is our only observation, i.e., we do not have access to the sequence $\{G_1,\ldots,G_{n-1}\}$. In particular, the order of the vertices is unknown to us.
Clearly the distribution of this random graph is parameterized by $m$ and $\delta$. Since $G_n$ has exactly $n+1$ vertices and $nm$ edges, we have knowledge of $m$ (so this is not an unknown parameter). Therefore the only unknown parameter is the function $\delta$.

Our goal is to determine when one can find evidence in the final graph that the growth dynamics of the network has changed at some point. This can be rather naturally formulated as an hypothesis testing problem. Namely, we would like to conduct the following binary hypothesis test: under the null hypothesis (denoted by $H_0$) we assume $\delta(t)=\delta_0>-m$ for all $t\in\N$. Under the alternative hypothesis (denoted by $H_1$) we assume $\delta$ is a step function, namely
    \begin{equation}
    \delta(t) = \mathds 1\{t \leq \tau_n\}\delta_0 + \mathds 1\{t > \tau_n\}\delta_1\ ,
    \end{equation}
for some $\delta_0\neq\delta_1$ with $\delta_0,\delta_1>-m$, and $\tau_n\in\N$ with $\tau_n \leq n$.

Our main research goal is to determine when it is possible to distinguish the two hypotheses, based solely on $G_n$ (where we do not know the order in which the various vertices have arrived). For our first result we consider $\delta_0$ to be known, but we then relax this assumption and devise a test that does not require this knowledge while retaining the same asymptotic power characterization. In both cases we use the parameterization $\tau_n=n-cn^\gamma$, where $c>0$ and $\gamma\in(0,1)$, and obviously $c$ and $\gamma$ are unknown.

Note that the alternative model does coincide with the null model when either $\delta_1=\delta_0$ or $\tau_n=n$. Furthermore, since $\delta$ is a step-function, the attachment rule in \eqref{eq:attachment_function_general} can be further simplified to get, for $v\in\{v_0,\ldots,v_{t-1}\}$,

\begin{equation}
\label{eq:attachment_function_alt}
\P\left(v_{t,i} = v\mid G_{t,i-1}\right)=\left\{\begin{array}{ll}
\frac{D_{v}(G_{t,i-1})+\delta_0}{2(t-1)m + t\delta_0 + (i-1)} & \quad\text{if } t\leq \tau_n,\\\\
\frac{D_{v}(G_{t,i-1})+\delta_1}{2(t-1)m + t\delta_1 + (i-1)} & \quad\text{if } t>\tau_n.
\end{array}\right. \ 
\end{equation}

\subsection{Assumptions and notation} Throughout this paper, when limits are unspecified, they are taken as the graph size $n \to \infty$. We recall that we consider the parameterization $\tau_n=n-cn^\gamma$. All the parameters $m$, $\delta_0$, $\delta_1$, $c$, and $\gamma$ are assumed to remain constant as a function of $n$. We use the subscripts $0$ and $1$ in the expectation and probability operators to indicate whether we are considering the null or alternative hypothesis. Finally, we also use standard asymptotic notation: $a_n = \bigO(b_n)$ when $| a_n / b_n |$ is bounded, $a_n = \bigOmega(b_n)$ when $b_n = \bigO(a_n)$, $a_n = \smallO(b_n)$ when $a_n / b_n \to 0$, $a_n=\omega(b_n)$ when $b_n=o(a_n)$, and $a_n \asymp b_n$ when $a_n = (1 + \smallO(1)) b_n$. Also, we use the probabilistic versions of these: $a_n = \bigOp(b_n)$ when $| a_n / b_n |$ is stochastically bounded, $a_n = \bigOmegap(b_n)$ when $b_n = \bigOp(a_n)$, and $a_n = \smallOp(b_n)$ when $a_n / b_n$ converges to $0$ in probability.

\section{Minimal degree tests}\label{sec:minimal_degree_test}

All the tests we consider use the information contained in low-degree vertices, in particular the number of vertices of minimal degree. Although this is a rather simple idea, the number of minimal degree vertices is substantially affected  by the presence of a changepoint, even at late stages in the growth of the graph.

\subsection{Powerful test for known \texorpdfstring{$\delta_0$}{δ₀}}
We begin with the assumption that $\delta_0$ is known. Although this might seem unrealistic, it provides a wealth of information about the properties of the test statistic we consider, and paves the way for more general results when $\delta_0$ is unknown.

To define our test we first introduce some notation. To reduce the notational burden we identify the set of vertices $\{v_0,\ldots,v_n\}$ with $[n]\coloneqq \{0,1,\ldots,n\}$. Furthermore, let $D_v(t)\coloneqq D_v(G_t)$ denote the degree of vertex $v$ in graph $G_t$, and let $N_k(t)$ be the number of vertices of degree $k$ in the graph $G_t$, that is,
\begin{equation}
N_k(t) \coloneqq \sum_{v\in[t]} \mathds 1\{D_v(t) = k\}\ .
\end{equation}
Since each vertex is attached to at least $m$ other vertices, in our model we naturally have that $N_k(n) = 0$ for $k < m$, and $N_m(n)$ denotes the number of vertices with minimal degree. The latter quantity plays a crucial role in our test.

It is well-known that in the classical preferential attachment model, which corresponds to our null model, the number of vertices of degree $k\geq m$ is highly concentrated \cite{Deijfen2007,VanderHofstad2017}. In particular, $N_k(n)$ is well concentrated around $n p_k(\delta_0)$ where $p_k=p_k(\delta_0)$ satisfies the recursion
\begin{equation}\label{eqn:recursion_p_k}
p_k=\frac{k-1+\delta_0}{2+\delta_0/m}p_{k-1}-\frac{k+\delta_0}{2+\delta_0/m}p_k\ ,
\end{equation}
for $k>m$ with
\begin{equation}
\label{eq:limiting_degree_distribution}
p_m(\delta_0) = \frac{2 + \delta_0 / m}{m + \delta_0 + 2 + \delta_0 / m}\ .
\end{equation}
This recursion is easily solved, giving rise to the following expression for $p_k(\delta_0)$:
\begin{equation}\label{eqn:p_k}
p_k(\delta_0)\coloneqq(2+\delta_0/m)\frac{\Gamma(k+\delta_0)\Gamma(m+2+\delta_0+\delta_0/m)}{\Gamma(m+\delta_0)\Gamma(k+3+\delta_0+\delta_0/m)}\ .
\end{equation}
Thus, $p_k(\delta_0)$ is the limiting degree distribution of the random graph $G_n$ under the null model.

We are now able to introduce our test statistic, that simply compares the number of minimal degree vertices to its asymptotic expected value under the null model, as
\begin{equation}
T(G_n)\coloneqq N_m(n) - n p_m(\delta_0)\ .
\end{equation}
If the observed value of $T(G_n)$ is significantly different from zero, then we have evidence to reject the null model. This brings us to the first result, characterizing when such a test is powerful. Specifically we introduce a test based on this statistic that is guaranteed to have type-I error that is at most $\alpha$ (asymptotically), and that is asymptotically powerful provided $\gamma>\tfrac{1}{2}$. In other words, under the alternative hypothesis the type-II error converges to zero provided $\gamma>\tfrac{1}{2}$. When $\gamma<\tfrac{1}{2}$, the test is powerless, and when $\gamma=\tfrac{1}{2}$, the type-II error is bounded by a constant that depends on the specific model parameters.

Although this result indicates when the test is powerful or powerless, it provides only a conservative upper bound on the type I error. It follows from \cite{Baldassarri2021} that this test statistic is asymptotically normal, and therefore we can calibrate this test to guarantee that the type I error is $(1+\smallO(1))\alpha$ as $n\to\infty$. We do this in Section \ref{sec:asymptotically_calibrated_tests}. The proof of Theorem~\ref{thm:minimal_degree_test_known_delta} is given in Section~\ref{sec:proof_known}.

\begin{theorem}[Asymptotically powerful test: known $\delta_0$]
\label{thm:minimal_degree_test_known_delta}
Consider the test that rejects the null hypothesis for large values of $T(G_n)$. Namely, define the test
\begin{equation}\label{eq:minimal_degree_test_known_delta}
\psi(G_n)\coloneqq \mathds 1{\left\{|T(G_n)|\geq m \sqrt{8n \log(2 / \alpha)}\right\}}\ ,
\end{equation}
where $\alpha\in(0,1)$. The type-I error of this test is asymptotically bounded by $\alpha$, i.e.,
\begin{equation}
\P_0\left(\psi(G_n)\neq 0\right)\leq (1 + \smallO(1)) \alpha\ .
\end{equation}
Furthermore, the type-II error of this test satisfies
\begin{align}
\lefteqn{\P_1\left(\psi(G_n)= 0\right)}\\
&\leq \left\{\begin{array}{ll}
\smallO(1) & \text{ when } \gamma > \tfrac{1}{2} ,\\
(2 + \smallO(1)) \exp\left(-\left(\left(\frac{c  |1 - p_m(\delta_0) / p_m(\delta_1)|}{m \sqrt{8}} - \sqrt{\log(2 / \alpha)}\right) \vee 0\right)^2\right) & \text{ when } \gamma = \tfrac{1}{2},
\end{array}\right.\ 
\end{align}
and
$\P_1\left(\psi(G_n)= 0\right)\geq(1+\smallO(1))(1-\alpha)$
when $\gamma<\tfrac{1}{2}$.
\end{theorem}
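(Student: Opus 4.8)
The plan is to combine sharp first‑moment estimates for $N_m(n)$ under each hypothesis with a sub‑Gaussian concentration bound for $N_m(n)$ around its mean. For the concentration I would apply the Azuma--Hoeffding inequality to the Doob martingale $t\mapsto\E_i[N_m(n)\mid G_t]$, $t=1,\dots,n$, using that its increments are bounded by $2m$ uniformly in $n$; this holds under both $H_0$ and $H_1$, since the bound follows from a coupling of two preferential attachment processes that coincide up to time $t-1$ and differ only in the construction of vertex $v_t$, and is insensitive to the value of $\delta$ used at each step. This yields, for $i\in\{0,1\}$,
\[
\P_i\big(|N_m(n)-\E_i[N_m(n)]|\ge s\big)\le 2\exp\!\big(-s^2/(8m^2 n)\big),
\]
which is exactly what dictates the form of the threshold $m\sqrt{8n\log(2/\alpha)}$. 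For the type‑I error, it is classical (see e.g.\ \cite{Deijfen2007,VanderHofstad2017}) that $\E_0[N_m(n)]=np_m(\delta_0)+\bigO(1)$, so the event $\{|T(G_n)|\ge m\sqrt{8n\log(2/\alpha)}\}$ forces $|N_m(n)-\E_0[N_m(n)]|\ge(1-\smallO(1))\,m\sqrt{8n\log(2/\alpha)}$, and the concentration bound gives $\P_0(\psi(G_n)\neq0)\le2\exp(-(1-\smallO(1))\log(2/\alpha))=(1+\smallO(1))\alpha$.

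The heart of the argument is the first moment of $N_m(n)$ under $H_1$. Since $\delta(t)=\delta_0$ for $t\le\tau_n$, the graph $G_{\tau_n}$ has exactly the null law and hence $\E_1[N_m(\tau_n)]=\tau_n p_m(\delta_0)+\bigO(1)$. For $\tau_n<t\le n$, using that the normalization in \eqref{eq:attachment_function_alt} is deterministic and that $N_m$ can only decrease when one of the $m$ edges of $v_t$ attaches to a current degree‑$m$ vertex, I would derive
\[
\E_1[N_m(t)]=\E_1[N_m(t-1)]\Big(1-\tfrac{a_1}{t}\Big)+1+\bigO(1/t),\qquad a_1\coloneqq\tfrac{m(m+\delta_1)}{2m+\delta_1},
\]
whose fixed‑point slope is $p_m(\delta_1)=1/(1+a_1)$, consistent with \eqref{eq:limiting_degree_distribution}. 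Solving this recursion from $t=\tau_n$ to $t=n$ (using $\prod_{s=\tau_n+1}^n(1-a_1/s)\sim(\tau_n/n)^{a_1}=1-a_1cn^{\gamma-1}+\smallO(n^{\gamma-1})$ together with $\tau_n=n-cn^\gamma$) and collecting the terms of order $n^\gamma$ gives
\[
\E_1[T(G_n)]=(1+\smallO(1))\,c\,n^\gamma\big(1-p_m(\delta_0)/p_m(\delta_1)\big),
\]
the constant being nonzero because $p_m$ is strictly monotone in $\delta$.

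For the type‑II error, write $\{\psi(G_n)=0\}=\{|T(G_n)|<\theta\}$ with $\theta\coloneqq m\sqrt{8n\log(2/\alpha)}$; this event forces $|N_m(n)-\E_1[N_m(n)]|>|\E_1[T(G_n)]|-\theta$, so the concentration bound yields
\[
\P_1(\psi(G_n)=0)\le 2\exp\!\Big(-\big((|\E_1[T(G_n)]|-\theta)\vee0\big)^2/(8m^2 n)\Big).
\]
When $\gamma>\tfrac12$ we have $|\E_1[T(G_n)]|-\theta=\omega(\sqrt n)$, so the exponent diverges and the bound is $\smallO(1)$. When $\gamma=\tfrac12$, $|\E_1[T(G_n)]|-\theta=(1+\smallO(1))\big(c\,|1-p_m(\delta_0)/p_m(\delta_1)|-m\sqrt{8\log(2/\alpha)}\big)\sqrt n$, and the right‑hand side above converges to $(2+\smallO(1))\exp(-((\tfrac{c|1-p_m(\delta_0)/p_m(\delta_1)|}{m\sqrt{8}}-\sqrt{\log(2/\alpha)})\vee0)^2)$. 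When $\gamma<\tfrac12$, coupling the $H_1$ graph to an $H_0$ graph that agrees with it up to time $\tau_n$ shows that $T(G_n)$ differs from its $H_0$ counterpart by $\smallOp(\sqrt n)$ (only $\bigO(n^\gamma)=\smallO(\sqrt n)$ vertices are affected); hence, by the asymptotic normality of $T(G_n)$ under $H_0$ \cite{Baldassarri2021} and Slutsky's theorem, $\P_1(\psi(G_n)=0)\to\P(|\mathcal{N}(0,\sigma^2)|<m\sqrt{8\log(2/\alpha)})$, and this limit is at least $1-\alpha$ because, by the type‑I analysis, $\P(|\mathcal{N}(0,\sigma^2)|\ge m\sqrt{8\log(2/\alpha)})=\lim_n\P_0(|T(G_n)|\ge\theta)\le\alpha$. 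This gives the stated lower bound $(1+\smallO(1))(1-\alpha)$.

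The step I expect to require the most care is the concentration inequality: the bounded‑increment property of the Doob martingale is subtle because changing the attachment of a single vertex perturbs the attachment probabilities of all later edges, so it has to be established through an explicit coupling of two preferential attachment processes (or, under $H_0$, through the P\'olya‑urn representation of affine preferential attachment), with the time‑inhomogeneity under $H_1$ handled phase by phase. A secondary delicate point is the error bookkeeping in the recursion above, where one must verify that the accumulated $\bigO(1/t)$ terms and the multi‑edge corrections sum to $\smallO(n^\gamma)$ — precisely what makes the constant in the $\gamma=\tfrac12$ bound sharp.
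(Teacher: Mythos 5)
Your proposal is correct and reaches every claim of the theorem, but it differs from the paper's proof in two substantive places, both worth noting. First, for the key estimate $\E_1[T(G_n)]=(1+\smallO(1))\,c\,n^\gamma(1-p_m(\delta_0)/p_m(\delta_1))$ the paper (Proposition~\ref{prp:A}) decomposes $\E_1[N_m(n)]-\E_1[N_m(\tau_n)]$ \emph{vertex by vertex}, splitting into ``old'' vertices (via Lemma~\ref{lem:crucial}, which estimates $\P_\ell(D_v(n)-D_v(\tau_n)>0\mid D_v(\tau_n)=k)$) and ``new'' vertices, and then takes the difference with the corresponding null-model quantity; you instead iterate a \emph{time recursion} $\E_1[N_m(t)]=\E_1[N_m(t-1)](1-a_1/t)+1+\bigO(1/t)$ from $\tau_n$ to $n$. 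Your fixed-point identity $p_m(\delta_1)=1/(1+a_1)$ checks out against \eqref{eq:limiting_degree_distribution}, the product $(\tau_n/n)^{a_1}=1-a_1cn^{\gamma-1}+\smallO(n^{\gamma-1})$ is right, and the accumulated errors are $\bigO(n^{2\gamma-1}\vee n^{\gamma-1})=\smallO(n^\gamma)$ for $\gamma<1$, so both routes yield the same leading constant; the recursion is arguably more self-contained for this theorem, whereas the paper's vertex decomposition (via Lemma~\ref{lem:crucial}) is reused heavily in the proof of Theorem~\ref{thm:main2} and Proposition~\ref{prp:ana-diff-means}, which is why the authors set it up that way. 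Second, for $\gamma<\tfrac12$ the paper invokes the CLT for $T(G_n)$ \emph{under the alternative} (Theorem~\ref{thm:asymptotic_normality_test_statistics}, proved via Lemma~\ref{lem:joint_CLT}) together with $\E_1[T(G_n)]/\sqrt{n}=\smallO(1)$; you instead couple the two models up to $\tau_n$ and observe that $T(G_n)$ equals the time-$\tau_n$ null statistic up to a deterministic $\bigO(n^\gamma)=\smallO(\sqrt n)$ correction, then apply only the null-model CLT of \cite{Baldassarri2021} and Slutsky. This is a lighter argument for this specific case (it avoids re-proving normality under $H_1$), and your identification of the limiting rejection probability as the (at most $\alpha$) limit of the type-I error correctly delivers the bound $(1+\smallO(1))(1-\alpha)$. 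The remaining ingredients — the Doob martingale with increments bounded by $2m$ via the coupling/resampling argument, the Azuma--Hoeffding bound $2\e^{-x^2/(8m^2n)}$, the $\bigO(1)$ bias bound from \cite{Deijfen2007}, and the case analysis at $\gamma=\tfrac12$ — coincide with the paper's.
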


The proof of this result is based on the following observations. Under the null model it is known that $\E_0(N_m(n))-np_m(\delta_0)=\bigO(1)$. Under the alternative model we can show that
\begin{equation}
\E_1\left[N_m(n)\right] - np_m(\delta_0)= (1+\smallO(1)) \eta(\delta_0,\delta_1) n^\gamma\ ,
\end{equation}
where
\begin{equation}
\eta(\delta_0,\delta_1)\coloneqq c (1 - p_m(\delta_0) / p_m(\delta_1))\ .\label{eqn:shift_T}
\end{equation}
Therefore there is a substantial difference in the expected values of the test statistic under the null and alternative models. Note that both $\E_1\left[N_m(n)\right]$ and $np_m(\delta_0)$ have the same order of magnitude $\bigO(n)$, so the above result characterizes the second-order behavior of $\E_1\left[N_m(n)\right]$ and thus is somewhat delicate. Besides characterizing the mean of $N_m(n)$, we must also characterize the fluctuations of $N_m(n)$ around it. These are of small order, and controlled by a rather standard application of Azuma-Hoeffding's inequality. Specifically $N_m-\E[N_m(n)]=\bigOp(\sqrt{n})$. This result holds both under the null and alternative hypothesis, showing that it is possible to construct a powerful test when $\gamma>\tfrac{1}{2}$.

As the proposed test is powerless when $\gamma<\tfrac{1}{2}$, one might wonder if there is any test that can have power in that situation. Although a formal answer to this question is still open, we conjecture that no test can be powerful in that scenario:
\begin{conjecture}[Powerless tests when $\gamma<\tfrac{1}{2}$]\label{conj:lower_bounds}
Consider the case $\gamma<\tfrac{1}{2}$. We conjecture the following:
\begin{enumerate}
\item[(i)] All tests based on the vertex degrees $\{D_v(n)\}_{v\in[n]}$ are powerless.
\item[(ii)] All tests based on $G_n$ are powerless.
\end{enumerate}
\end{conjecture}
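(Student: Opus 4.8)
The plan is to prove a total-variation upper bound rather than to construct a test, and the argument must be conducted at the level of the \emph{observable} (the degree multiset, resp.\ the unlabelled graph $G_n$). Since extra information can only help the statistician, it suffices to fix arbitrary $c>0$ and $\gamma\in(0,\tfrac12)$ and prove powerlessness in the \emph{simple-versus-simple} problem in which $\tau_n=n-cn^\gamma$ is revealed; moreover the degree sequence is a function of $G_n$, so part~(ii) implies part~(i) and (i) is the accessible case. The conceptual point — and the reason a naive argument cannot work — is that the Kullback--Leibler divergence between the two full attachment \emph{trajectories} (the ordered graphs together with all intermediate steps $G_{t,i}$) is of order $n^\gamma\to\infty$, since each of the $\bigTheta(n^\gamma)$ post-changepoint vertices carries $\bigTheta(1)$ bits; the conjecture asserts that almost all of this is destroyed by the data-processing map that forgets the vertex order and the intermediate graphs, so a proof must exploit this forgetting and cannot start from a coupling or divergence bound on labelled histories.

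For part~(i) I would carry out a Gaussian-approximation programme on the degree-count vector. First, reduce to the truncation $\vec N^{\le K}(n)=(N_m(n),\dots,N_K(n))$ with $K=K(n)\to\infty$ slowly, bounding the contribution of the degrees above $K$ (the number of such vertices being $\bigOp(nK^{-(2+\delta_0/m)})$ under both hypotheses). Second, establish a multivariate central limit theorem \emph{in total variation} for $\vec N^{\le K}(n)$ under $\P_0$ and under $\P_1$, with a common leading covariance $\Sigma_n=n\Sigma+\smallO(n)$, where $\Sigma$ is positive definite on the subspace $\{\vec x:\sum_k x_k=\sum_k kx_k=0\}$; the marginal normality of $N_m$ here is already available from \cite{Baldassarri2021}. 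Third, compute the mean shift by generalising the computation \eqref{eqn:shift_T}: using the recursion \eqref{eqn:recursion_p_k} and tracking the perturbation introduced by using $\delta_1$ rather than $\delta_0$ for $t>\tau_n$, show $\vec\mu_1-\vec\mu_0=n^\gamma\vec\eta\,(1+\smallO(1))$ with fixed coefficients $\eta_k$ obeying $\sum_k|\eta_k|<\infty$ and $\sum_k\eta_k=\sum_k k\eta_k=0$ (the last two because the numbers of vertices and of edges agree exactly under the two hypotheses). Since $\vec\eta$ then lies in the subspace on which $\Sigma$ is invertible, the Mahalanobis norm of the shift is $\bigTheta(n^{2\gamma}/n)=\bigTheta(n^{2\gamma-1})\to0$ for $\gamma<\tfrac12$, so the two approximating Gaussians satisfy $d_{\mathrm{TV}}=\bigO(n^{\gamma-1/2})\to0$; combining with the second step gives $d_{\mathrm{TV}}\big(\mathcal L_0(\vec N(n)),\mathcal L_1(\vec N(n))\big)\to0$, and hence no powerful degree-based test.

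I expect the bottleneck in part~(i) to be the second step: convergence \emph{in distribution} does not control total variation, so one needs a local central limit theorem with an explicit rate, uniform over the growing window of degrees $m,\dots,K(n)$ and valid under the time-inhomogeneous dynamics of $H_1$ — precisely the delicate ingredient whose absence is presumably why the statement is only conjectured. Part~(ii) is harder still, since $G_n$ is not asymptotically Gaussian; there I would instead try to bound the chi-squared divergence $\chi^2(\P_1^{G_n}\|\P_0^{G_n})=\E_0\big[(\mathrm d\P_1/\mathrm d\P_0)^2\big]-1$, writing $(\mathrm d\P_1/\mathrm d\P_0)(G_n)=\E_0[L_n\mid G_n]$ with $L_n$ the product likelihood ratio over the post-$\tau_n$ attachment steps of the unobserved ordered history, so that the second moment becomes an annealed computation over \emph{two} independent vertex orderings producing the same final graph, governed by their overlap structure. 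Making this tractable — in particular excluding that the low-degree vertices, which are almost surely a sizeable subset of the true late arrivals, can be combined so as to beat the $\sqrt{n}$ noise floor — is the genuine obstacle, and is entirely consistent with the authors stating only a conjecture.
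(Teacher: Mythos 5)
The statement you are addressing is a \emph{conjecture}: the paper contains no proof of it, only a heuristic motivation (the mean shift of the degree counts under $H_1$ is of order $n^\gamma$ while their fluctuations are of order $\sqrt{n}$, and Lemma~\ref{lem:joint_CLT} shows asymptotic normality under both models with the same covariance), together with the remark in the discussion that part (i) ``might be approached by relying on the asymptotic normality characterization in Lemma~\ref{lem:joint_CLT}, together with bounds on $\E_1[N_k(n)]-\E_0[N_k(n)]$.'' Your plan for part (i) is essentially a fleshed-out version of exactly that route, and your reductions (revealing $\tau_n$ can only help the statistician; the degree sequence is $G_n$-measurable, so (ii) implies (i)) are correct and match the paper's own remarks. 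So your proposal is faithful to the authors' intended strategy — but it is a research programme, not a proof, and it should not be presented as establishing the statement.

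The gaps are precisely where you flag them, and they are genuine. First, the decisive step is a total-variation (local) central limit theorem for the truncated degree-count vector, with an explicit rate, uniform over the growing window $m\le k\le K(n)$ and valid under the time-inhomogeneous dynamics of $H_1$; nothing of this kind is available — Lemma~\ref{lem:joint_CLT} and \cite{Baldassarri2021} give only convergence in distribution in the product topology, which does not control total variation, and your truncation step itself needs a joint TV bound on the high-degree tail, not merely a stochastic bound on the number of vertices of degree $>K(n)$. Second, two structural facts are asserted rather than proved: positive definiteness of the limiting covariance $\Sigma$ on the constrained subspace $\{\vec x:\sum_k x_k=\sum_k k x_k=0\}$, and the existence of a summable shift vector $\vec\eta$ with $\E_1[N_k(n)]-\E_0[N_k(n)]=\eta_k n^\gamma(1+\smallO(1))$ for \emph{all} $k$; the latter would require extending Proposition~\ref{prp:A} (via Lemma~\ref{lem:crucial}) beyond $k=m$, with control uniform enough in $k$ to justify the Mahalanobis-norm computation — plausible, but not done in the paper or in your sketch. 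Third, for part (ii) the conditional likelihood ratio $\E_0[L_n\mid G_n]$ and the annealed second-moment computation over two vertex orderings with a common final graph are only named; the overlap analysis that would exclude a test beating the $\sqrt{n}$ noise floor is the entire difficulty. In short: your approach is sensible and consistent with what the authors suggest, but none of the steps that make the statement a conjecture rather than a theorem are closed by your argument.
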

Obviously the statement (ii) implies (i). The main motivation for (i) is that when $\gamma<\tfrac{1}{2}$ the number of vertices with degree $k$ will deviate from $np_k(\delta_0)$ by at most $\bigO(n^\gamma)$. These deviations become smaller when $k$ gets larger. On the other hand, the fluctuations of $N_k(n)$ around its mean will also become smaller, but should always be of higher order than this mean-shift. Actually, the results in \cite{Baldassarri2021} characterize the joint distribution of the degree counts under the null model and show this is asymptotically a multi-variate normal distribution. As shown in Lemma~\ref{lem:joint_CLT} these degree counts are also asymptotically normal, with the same covariance matrix, but a rather small mean-shift. We conjecture this shift is small enough so to imply the total variation distance between the two distributions is close to zero, implying (i). The conjecture (ii) is significantly stronger, stating that higher-level information contained on the edge structure of $G_n$ will not be helpful for this testing problem. This is expected given that the attachment dynamics are only driven by the vertex degrees, but proving such a statement requires a careful formalization of this insight.

\subsection{Powerful test for unknown \texorpdfstring{$\delta_0$}{δ₀}}

The knowledge of $\delta_0$ was crucial for the test above, as it gives a benchmark to compare $N_m(n)$ against, namely $np_m(\delta_0)$. Without this knowledge we must essentially estimate, from $G_n$, the value of $\delta_0$. For such an approach to be fruitful a candidate estimator must be ``close enough'' to $\delta_0$ both under the null and alternative models. Gao and van der Vaart in \cite{Gao2017} consider the problem of estimating $\delta_0$ when the preferential attachment function is \emph{constant}, meaning that we are under the null model in our formulation. The authors proposed a maximum likelihood estimator based on $G_n$, and showed it consistently estimates $\delta_0$ and is asymptotically normal. A natural idea is to start by considering this estimator as well, and understand how its properties change when $G_n$ is generated under the alternative model.

As done in \cite{Gao2017}, to avoid the usual issues at the boundary of the parameter space we make an extra assumption that the range of possible values for $\delta_0$ and $\delta_1$ is known:
\begin{assumption}[Containment of $\delta_0,\delta_1$]\label{ass:parameter_space}
Let $-m<\delta_{\min}\leq \delta_{\max}$ be known, and assume that $\delta_0,\delta_1\in(\delta_{\min},\delta_{\max})$.
\end{assumption}
As shown in \cite{Gao2017}, under the null model the (normalized) log-likelihood function $\iota_n:[\delta_{\min},\delta_{\max}]\to\R$ is given by
\begin{align}
\iota_n(\delta)
\coloneqq{}& \frac{1}{n+1}\left(\sum_{k=1}^\infty \log(k+\delta) \left(N_{>k}(n)-(n+1)\mathds 1\{k<m\}\right)\ -\ \sum_{t=2}^n\sum_{i=1}^m \log S_{t,i-1}(\delta)\right)\notag\\
={}& \frac{1}{n+1}\sum_{k=m}^\infty \log(k+\delta) N_{>k}(n)\ -\ \frac{1}{n+1}\sum_{t=2}^n\sum_{i=1}^m \log S_{t,i-1}(\delta)\ ,
\end{align}
where $S_{t,i-1}(\delta)\coloneqq t\delta+2m(t-1)+(i-1)$ and $N_{>k}(n)\coloneqq \sum_{j>k} N_j(n)$. The maximum-likelihood estimator is defined as
\begin{equation}\label{eqn:MLE}
\hat\delta_n\coloneqq\argmax_{\delta\in[\delta_{\min},\delta_{\max}]} \iota_n(\delta)\ .
\end{equation}
Equivalently (for large $n$) we can define $\hat\delta_n$ as the solution in $\delta\in[\delta_{\min},\delta_{\max}]$ of $\frac{\partial}{\partial \delta} \iota_n(\delta)\coloneqq \iota'_n(\delta)=0$. Although not obvious, this definition coincides with \eqref{eqn:MLE} for large $n$, since it is shown in \cite{Gao2017} that the solution of $\iota'_n(\delta)=0$ exists and is unique for large enough $n$ with high probability. Note that the score function is given by
\begin{equation}\label{eqn:iota_n_prime}
\iota'_n(\delta)=\frac{1}{n+1}\sum_{k=m}^\infty \frac{1}{k+\delta}N_{>k}(n)\ -\ \frac{1}{n+1}\sum_{t=2}^n\sum_{i=1}^m \frac{t}{S_{t,i-1}(\delta)}\ .
\end{equation}

Motivated by this estimator we consider the test statistic
\begin{equation}\label{eqn:test_statistic_unknown}
Q(G_n)\coloneqq N_m(n)-np_m(\hat\delta_n)\ .
\end{equation}
This is analogous to the previously considered statistic, with the exception that $\delta_0$ is replaced by the above estimator.

A test based on the above statistic will only be sensible if $\hat\delta_n$ is a good surrogate for $\delta_0$, under both the null and alternative models. When $\tfrac{1}{2}<\gamma<1$ this is indeed the case, and we show that $\hat\delta_n$ is a consistent estimator of $\delta_0$ under both the null and alternative models. However, consistency is not enough, and it is necessary to carefully characterize the rate of convergence of this estimator under the alternative model. It turns out that the deviations of $N_m(n)$ and $np_m(\hat\delta_n)$ around their respective means have exactly the same order, but with \textit{different} leading constants. A careful characterization of those constants is the crucial result leading to our main result:
\begin{theorem}[Asymptotically powerful test, unknown $\delta_0$]\label{thm:main2}
Consider Assumption~\ref{ass:parameter_space} and the test statistic defined in \eqref{eqn:test_statistic_unknown}.
Let $a_n$ be a positive diverging sequence such that $a_n=\omega(\sqrt{n}\log n)$ and $a_n=n^{\tfrac{1}{2}+o(1)}$ and define the test
\begin{equation}
\phi(G_n)\coloneqq \mathds 1{\left\{|Q(G_n)|\geq a_n\right\}}\ .
\end{equation}
The type-I error of this test converges to zero as $n\to\infty$, i.e., 
\begin{equation}
\P_0\left(\phi(G_n)\neq 0\right)=\smallO(1)\ .
\end{equation}
Furthermore, when $\delta_0\neq\delta_1$ and $\tfrac{1}{2}<\gamma<1$ this test has vanishing type II error, i.e., as $n\to\infty$,
\begin{equation}
\P_1\left(\phi(G_n)=0\right)=\smallO(1)\ .
\end{equation}
\end{theorem}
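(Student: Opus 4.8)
The plan is to control $Q(G_n)$ from \eqref{eqn:test_statistic_unknown} through the decomposition
\begin{equation}
Q(G_n)=\bigl(N_m(n)-\E[N_m(n)]\bigr)+\bigl(\E[N_m(n)]-np_m(\delta_0)\bigr)-n\bigl(p_m(\hat\delta_n)-p_m(\delta_0)\bigr),
\end{equation}
valid under either hypothesis with $\E$ the relevant expectation. The first term is a centred degree count, controlled by Azuma--Hoeffding exactly as in the proof of Theorem~\ref{thm:minimal_degree_test_known_delta}, so that $N_m(n)-\E[N_m(n)]=\bigOp(\sqrt n)$ under both $\P_0$ and $\P_1$. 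The second term is a deterministic mean shift, equal to $\bigO(1)$ under $\P_0$ and, by the computation leading to \eqref{eqn:shift_T}, to $(1+\smallO(1))\,\eta(\delta_0,\delta_1)\,n^\gamma$ under $\P_1$. Thus the whole proof reduces to a sufficiently sharp analysis of $\hat\delta_n-\delta_0$ under each hypothesis, combined with the Taylor expansion $n\bigl(p_m(\hat\delta_n)-p_m(\delta_0)\bigr)=np_m'(\delta_0)(\hat\delta_n-\delta_0)+\bigO\bigl(n(\hat\delta_n-\delta_0)^2\bigr)$, where $p_m'$ denotes the derivative of \eqref{eq:limiting_degree_distribution}, which is nonzero for every admissible $\delta_0$.

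\emph{Type-I error.} Here I work under $\P_0$. Since $\iota'_n(\hat\delta_n)=0$, a mean-value expansion of \eqref{eqn:iota_n_prime} around $\delta_0$ gives $\hat\delta_n-\delta_0=-\iota'_n(\delta_0)/\iota''_n(\tilde\delta_n)$ for some $\tilde\delta_n$ between $\hat\delta_n$ and $\delta_0$. By \cite{Gao2017} the estimator $\hat\delta_n$ exists, is unique for large $n$, and is consistent for $\delta_0$; moreover a direct computation shows that $\iota''_n(\delta)$ converges, uniformly on $[\delta_{\min},\delta_{\max}]$ and in probability, to a deterministic limit which we write as $-I(\delta)$ with $I(\delta)>0$, so that $\iota''_n(\tilde\delta_n)=-I(\delta_0)+\smallOp(1)$. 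For the numerator, \eqref{eqn:iota_n_prime} exhibits $\iota'_n(\delta_0)$ as $(n+1)^{-1}$ times an affine functional of $\{N_{>k}(n)\}_{k\ge m}$ with bounded coefficients $1/(k+\delta_0)\le 1/(m+\delta_{\min})$, so Azuma--Hoeffding again yields $\iota'_n(\delta_0)-\E_0[\iota'_n(\delta_0)]=\bigOp(1/\sqrt n)$, while $\E_0[\iota'_n(\delta_0)]=\bigO(1/n)$ (the identity that makes $\delta_0$ the population maximiser in \cite{Gao2017}). Allowing an extra $\log n$ to make the various controls uniform over the needed discretizations of $k$ and of $\delta$, one gets $\hat\delta_n-\delta_0=\bigOp(\sqrt{\log n/n})$ under $\P_0$, hence the third term of the decomposition is $\bigOp(\sqrt{n\log n})$ and $Q(G_n)=\bigOp(\sqrt{n\log n})$. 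Since $a_n=\omega(\sqrt n\log n)$, this gives $\P_0(\phi(G_n)\neq0)=\P_0(|Q(G_n)|\ge a_n)=\smallO(1)$.

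\emph{Type-II error.} Now I work under $\P_1$ with $\delta_0\neq\delta_1$ and $\tfrac12<\gamma<1$. The mean-value identity for $\hat\delta_n-\delta_0$ is unchanged and $\iota''_n(\tilde\delta_n)=-I(\delta_0)+\smallOp(1)$ still holds, since the changepoint perturbs the degree counts only at order $n^\gamma$, so $N_{>k}(n)=(1+\smallOp(1))\,n\sum_{j>k}p_j(\delta_0)$ under $\P_1$ as well (this also re-establishes $\hat\delta_n\to\delta_0$ under $\P_1$). The decisive new quantity is the bias of the score. Refining the mean computation behind \eqref{eqn:shift_T} from $N_m$ to all degree counts yields $\E_1[N_{>k}(n)]-\E_0[N_{>k}(n)]=(1+\smallO(1))\,b_k(\delta_0,\delta_1)\,c\,n^\gamma$ with explicit constants $b_k$ decaying in $k$, and summing against $1/(k+\delta_0)$ (a convergent sum) gives
\begin{equation}
\E_1[\iota'_n(\delta_0)]=(1+\smallO(1))\,\beta(\delta_0,\delta_1)\,c\,n^{\gamma-1},\qquad\beta(\delta_0,\delta_1)\coloneqq\sum_{k\ge m}\frac{b_k(\delta_0,\delta_1)}{k+\delta_0},
\end{equation}
the $\P_0$-contribution $\bigO(1/n)$ being negligible against $n^{\gamma-1}$. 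Since also $\iota'_n(\delta_0)-\E_1[\iota'_n(\delta_0)]=\bigOp(\sqrt{\log n/n})=\smallOp(n^{\gamma-1})$ (here $\gamma>\tfrac12$ is used), we obtain $\hat\delta_n-\delta_0=(1+\smallOp(1))\,\beta(\delta_0,\delta_1)\,c\,n^{\gamma-1}/I(\delta_0)$, and then, via the Taylor expansion with remainder $\bigO(n(\hat\delta_n-\delta_0)^2)=\bigO(n^{2\gamma-1})=\smallO(n^\gamma)$ (here $\gamma<1$ is used), $n\bigl(p_m(\hat\delta_n)-p_m(\delta_0)\bigr)=(1+\smallOp(1))\,p_m'(\delta_0)\,\beta(\delta_0,\delta_1)\,c\,n^\gamma/I(\delta_0)$. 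Plugging into the decomposition, absorbing the $\bigOp(\sqrt n)$ fluctuation of $N_m(n)$, and using $\eta(\delta_0,\delta_1)=c\bigl(1-p_m(\delta_0)/p_m(\delta_1)\bigr)$,
\begin{equation}
Q(G_n)=(1+\smallOp(1))\,c\,C(\delta_0,\delta_1)\,n^\gamma,\qquad C(\delta_0,\delta_1)\coloneqq\Bigl(1-\frac{p_m(\delta_0)}{p_m(\delta_1)}\Bigr)-p_m'(\delta_0)\,\frac{\beta(\delta_0,\delta_1)}{I(\delta_0)}.
\end{equation}
The heart of the argument, and the step I expect to be the main obstacle, is to show that $C(\delta_0,\delta_1)\neq0$ whenever $\delta_0\neq\delta_1$; this is the precise meaning of the statement that $N_m(n)$ and $np_m(\hat\delta_n)$ have deviations of the same order $n^\gamma$ but with different leading constants. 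Establishing it requires carrying the two second-order computations ($\E_1[N_m(n)]$, which gives $\eta$, and $\E_1[N_{>k}(n)]$ for all $k\ge m$, which gives $\beta$) through explicitly enough to compare $1-p_m(\delta_0)/p_m(\delta_1)$ with $p_m'(\delta_0)\beta(\delta_0,\delta_1)/I(\delta_0)$ as functions of $(\delta_0,\delta_1)$ and to check they agree only on the diagonal; since by \eqref{eqn:p_k} all the ingredients are ratios of Gamma functions, I expect this to reduce to verifying that an explicit rational function of $(\delta_0,\delta_1)$ vanishes only when $\delta_0=\delta_1$. Granting $C(\delta_0,\delta_1)\neq0$, the conclusion is immediate: because $\gamma>\tfrac12$ is fixed and $a_n=n^{1/2+\smallO(1)}$, we have $n^\gamma=\omega(a_n)$, so $|Q(G_n)|=(1+\smallOp(1))\,|C(\delta_0,\delta_1)|\,c\,n^\gamma\ge a_n$ with probability $1-\smallO(1)$, i.e.\ $\P_1(\phi(G_n)=0)=\smallO(1)$.
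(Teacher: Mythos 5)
Your decomposition is essentially the paper's: the paper writes $Q(G_n)=A+B+C$ with $A=\E_\ell[N_m(n)]-np_m(\delta_0)$, $B=np_m(\delta_0)-np_m(\tilde\delta_n)$ for a deterministic ``population'' version $\tilde\delta_n$ of the estimator (defined by $\E_\ell[\iota_n'(\tilde\delta_n)]=0$), and a concentrated remainder $C$; your route through the mean-value identity $\hat\delta_n-\delta_0=-\iota_n'(\delta_0)/\iota_n''(\tilde\delta_n)$ is the same mechanism, and your type-I argument matches the paper's (Azuma--Hoeffding for $N_m$ and for the score, with the $\log n$ slack absorbed by $a_n=\omega(\sqrt n\log n)$).

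The genuine gap is that you leave unproved the one claim on which the whole theorem rests: that your leading constant $C(\delta_0,\delta_1)=\bigl(1-p_m(\delta_0)/p_m(\delta_1)\bigr)-p_m'(\delta_0)\beta(\delta_0,\delta_1)/I(\delta_0)$ is nonzero for $\delta_0\neq\delta_1$. You explicitly write ``Granting $C(\delta_0,\delta_1)\neq0$,'' but this cannot be granted: the two contributions are of the same order $n^\gamma$ by construction, and a priori they could cancel identically, which would make the test powerless. The paper resolves this by carrying out the computation in full. Proposition~\ref{prp:ana-diff-means} gives the score bias constant $\kappa(\delta_1,\delta_0,\delta_0)$ (your $c\,\beta$), and in the proof of Proposition~\ref{prp:B} the ratio $\kappa(\delta_1,\delta_0,\delta_0)/|\iota''(\delta_0)|$ undergoes an exact cancellation of the infinite series $\sum_{k\geq m}p_k(\delta_0)/(k+\delta_0)$ between numerator and denominator, collapsing to the closed form $c(\delta_1-\delta_0)\tfrac{2m+\delta_0}{2m+\delta_1}$; combining with $p_m'(\delta_0)=-(m+\delta_0+2+\delta_0/m)^{-2}$ and with $1-p_m(\delta_0)/p_m(\delta_1)$ then yields
\begin{equation}
\alpha(\delta_0,\delta_1)=c(\delta_0-\delta_1)\frac{m+\delta_0}{(2+\delta_1/m)(m+\delta_0+2+\delta_0/m)^2}\ ,
\end{equation}
which is manifestly nonzero off the diagonal. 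Without this computation (or some other argument ruling out cancellation), your proof establishes only that $Q(G_n)=\bigOp(n^\gamma)$ under the alternative, not that it is $\bigOmegap(n^\gamma)$, so the type-II bound does not follow. A secondary, much smaller point: your derivation of $\beta$ (``refining the mean computation to all degree counts'') compresses what is in fact the long appendix proof of Proposition~\ref{prp:ana-diff-means}, which requires a truncation at $k\asymp n^{1-\gamma}/\log n$ and a separate argument (via Lemma~\ref{lem:recursion_expected_degrees}) showing that vertices gaining two or more edges after $\tau_n$ contribute only $\smallO(n^\gamma)$; these error terms are exactly where the hypotheses $\gamma>\tfrac12$ and $\gamma<1$ enter, so they cannot be waved through.
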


The proof of the theorem is deferred to Section~\ref{sec:proof_unknown} and it is rather involved. It builds upon some of the results used to prove Theorem~\ref{thm:minimal_degree_test_known_delta}, namely the characterization of $N_m(n) - np_m(\delta_0)$. However, it does require a very careful characterization of $np_m(\hat\delta_n)-np_m(\delta_0)$. It turns out that both quantities have essentially the same order of magnitude, namely $\bigOp(n^{\gamma})$. However, the leading constants are different, and this fact allows the test to be powerful in the regime $\tfrac{1}{2}<\gamma<1$. 

The cases $\gamma=\tfrac{1}{2}$ and $\gamma=1$ are special. For $\gamma=\tfrac{1}{2}$, one can expect Gaussian fluctuations of $N_m(n)$ under the null and alternative hypotheses to compete with the resulting change in expectations of $N_m(n)$, so the Type-II error can not be expected to vanish, as it does for $\gamma\in (\tfrac{1}{2},1)$. For $\gamma=1$, on the other hand, both the maximal and minimal degree tests seem to perform well, but it is unclear which performs best, or whether there even is a better test available.

\subsection{Asymptotically calibrated tests}\label{sec:asymptotically_calibrated_tests}

The two theorems above characterize the regime when the proposed tests are powerful. However, they fall short of providing guidelines to properly calibrate the tests. Particularly, in a fixed significance testing framework, for any $\alpha\in(0,1)$ we would like to ensure that under the null model the type-I error is approximately $\alpha$. Theorem~\ref{thm:minimal_degree_test_known_delta} provides only an rather conservative asymptotic upper bound on the type-I error, due to the worst-case nature of the Azuma-Hoeffding inequality. To introduce our calibrated test, we define
\begin{align}%
\label{w-var-def}
w(\delta_0,m)&\coloneqq \frac{m^2 (m+\delta_0) (1 + m + \delta_0) (2m+\delta_0)}{(\delta_0 + 2m(1+m+\delta_0)) (\delta_0 + m(2+m+\delta_0))^2}\ .
\end{align}%
Furthermore, let $z_\alpha$ denote the right-quantile function of the standard normal distribution\footnote{For $\alpha\in(0,1)$, let $z_\alpha$ be the unique solution of $\alpha=\int_{z_\alpha}^\infty \frac{1}{\sqrt{2\pi}}{\rm e}^{-\frac{z^2}{2}}dz$.}. In particular, $z_{\alpha/2}>0$ when $\alpha\in(0,1)$. We are now ready to present the asymptotically calibrated test when $\delta_0$ is assumed known.
\begin{theorem}[Asymptotically calibrated test for known $\delta_0$]
\label{thm:asymptotically_calibrated_minimal_degree_test_known_delta}
Let $\alpha\in(0,1)$ and define the test
\begin{equation}\label{eq:asymptotically_calibrated_minimal_degree_test_known_delta}
\psi_{\rm cal}(G_n)\coloneqq \mathds 1{\left\{|T(G_n)|\geq \sqrt{n w(\delta_0,m)}z_{\alpha/2}\right\}}\ .
\end{equation}
As $n\to\infty$, the type-I error of this test converges to $\alpha$:
\begin{equation}
\P_0\left(\psi_{\rm cal}(G_n)\neq 0\right)\to \alpha\ .
\end{equation}
Furthermore, when $\delta_0\neq\delta_1$ and $\tfrac{1}{2}<\gamma<1$ this test has vanishing type-II error as $n\rightarrow \infty$:
\begin{align}
\P_1\left(\psi_{\rm cal}(G_n)= 0\right)\to 0\ .
\end{align}
\end{theorem}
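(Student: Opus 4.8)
The plan is to reduce Theorem~\ref{thm:asymptotically_calibrated_minimal_degree_test_known_delta} to the two ingredients already isolated for Theorem~\ref{thm:minimal_degree_test_known_delta}, namely the mean-shift computation and the fluctuation bound, upgrading only the fluctuation part from an Azuma--Hoeffding tail bound to a central limit theorem with explicit variance. For the type-I error, under $H_0$ we have $T(G_n)=N_m(n)-np_m(\delta_0)$, and since $\E_0[N_m(n)]-np_m(\delta_0)=\bigO(1)$ (as recalled after Theorem~\ref{thm:minimal_degree_test_known_delta}), the recentering is negligible at scale $\sqrt n$. First I would invoke the CLT from \cite{Baldassarri2021} (or its specialization in Lemma~\ref{lem:joint_CLT}) to conclude that $(N_m(n)-\E_0[N_m(n)])/\sqrt{n}\Rightarrow \mathcal N(0,\sigma^2)$ under $H_0$, and then I would identify the variance $\sigma^2$ with $w(\delta_0,m)$ defined in \eqref{w-var-def}; this is a bookkeeping step matching the covariance formula of \cite{Baldassarri2021} at $k=m$. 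Combining, $T(G_n)/\sqrt{n w(\delta_0,m)}\Rightarrow\mathcal N(0,1)$ under $\P_0$, so $\P_0(|T(G_n)|\geq \sqrt{n w(\delta_0,m)}\,z_{\alpha/2})\to \P(|Z|\geq z_{\alpha/2})=\alpha$ by the definition of $z_{\alpha/2}$ and continuity of the normal CDF.

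For the type-II error, the argument is the standard ``mean-shift dominates fluctuations'' estimate, now with the sharper threshold. Under $H_1$ with $\tfrac12<\gamma<1$ we have, from \eqref{eqn:shift_T}, $\E_1[N_m(n)]-np_m(\delta_0)=(1+\smallO(1))\eta(\delta_0,\delta_1)n^\gamma$ with $\eta(\delta_0,\delta_1)=c(1-p_m(\delta_0)/p_m(\delta_1))\neq 0$ since $\delta_0\neq\delta_1$ (note $p_m$ is strictly monotone in $\delta$, so $p_m(\delta_0)\neq p_m(\delta_1)$). Hence $|\E_1[T(G_n)]|$ grows like a constant times $n^\gamma$. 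By the Azuma--Hoeffding bound used in Theorem~\ref{thm:minimal_degree_test_known_delta}, $N_m(n)-\E_1[N_m(n)]=\bigOp(\sqrt n)$ under $H_1$ as well. The threshold is $\sqrt{n w(\delta_0,m)}\,z_{\alpha/2}=\bigTheta(\sqrt n)$, which is of strictly smaller order than $n^\gamma$ because $\gamma>\tfrac12$. Therefore
\begin{equation}
\P_1\bigl(|T(G_n)|< \sqrt{n w(\delta_0,m)}\,z_{\alpha/2}\bigr)\leq \P_1\Bigl(\bigl|N_m(n)-\E_1[N_m(n)]\bigr| > |\eta(\delta_0,\delta_1)|n^\gamma(1+\smallO(1)) - \sqrt{n w(\delta_0,m)}\,z_{\alpha/2}\Bigr),
\end{equation}
and the right-hand side is a large-deviation probability for a $\bigOp(\sqrt n)$ quantity exceeding a threshold of order $n^\gamma\gg\sqrt n$, which tends to $0$ by Azuma--Hoeffding (e.g.\ $\leq 2\exp(-\Omega(n^{2\gamma-1}))$).

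The main obstacle is not conceptual but a matching computation: one must verify that the asymptotic variance of $N_m(n)$ under the null preferential attachment model, as delivered by the multivariate CLT of \cite{Baldassarri2021}, equals precisely $w(\delta_0,m)$ as written in \eqref{w-var-def}. This requires extracting the $(m,m)$ entry of the limiting covariance matrix from \cite{Baldassarri2021} (whose covariance is typically expressed through the recursion constants $p_k$ and the attachment ratios) and simplifying it into the closed rational form in \eqref{w-var-def}; care is needed because the normalization conventions (number of vertices $n$ vs.\ $n+1$, and the exact parametrization of the attachment weights) in \cite{Baldassarri2021} may differ slightly from ours and must be reconciled. A secondary minor point is confirming that the CLT transfers the recentering from $\E_0[N_m(n)]$ to $np_m(\delta_0)$ at no cost, which follows from $\E_0[N_m(n)]-np_m(\delta_0)=\bigO(1)=\smallO(\sqrt n)$ together with Slutsky's theorem. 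Everything else is a direct reuse of the estimates established for Theorem~\ref{thm:minimal_degree_test_known_delta}.
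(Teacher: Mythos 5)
Your proposal is correct and follows essentially the same route as the paper, which derives Theorem~\ref{thm:asymptotically_calibrated_minimal_degree_test_known_delta} directly from the asymptotic normality \eqref{eq:asymptotic_normality_known_delta_test_statistic_null_hypothesis} (with the variance identification $w(\delta_0,m)$ carried out in the proof of Proposition~\ref{prop:joint_asymptotic_normality_degree_counts_estimator_delta_zero}) for the type-I error, and from the mean shift of Proposition~\ref{prp:A} dominating the $\bigOp(\sqrt n)$ fluctuations for the type-II error. The only cosmetic difference is that you bound the fluctuations under $H_1$ via Azuma--Hoeffding rather than via the alternative-model CLT \eqref{eq:asymptotic_normality_known_delta_test_statistic_alternative_hypothesis}; both suffice since $\gamma>\tfrac{1}{2}$.
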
%
A statement regarding partial power when $\gamma=\frac{1}{2}$ is also possible, but not particularly insightful. To define the next test we require some additional notation. Let
\begin{align}%
\label{v-var-def}
v(\delta_0,m)&\coloneqq \sum_{k=m}^\infty \frac{m p_k(\delta_0)}{(k+\delta_0)(2m+\delta_0)} - \frac{m}{(2m+\delta_0)^2}\ ,
\end{align}%
and
\begin{align}
\label{u-var-def}
u(\delta_0,m)&\coloneqq - \frac{m^4}{v(\delta_0,m) (\delta_0 + m(2 + m + \delta_0))^4}\ .
\end{align}
The next theorem presents the asymptotically calibrated test for the unknown $\delta_0$ case:

\begin{theorem}[Asymptotically calibrated test, unknown $\delta_0$]\label{thm:asymptotically_calibrated_minimal_degree_test_unknown_delta}
Consider Assumption~\ref{ass:parameter_space}, let $\alpha\in(0,1)$, and define the test
\begin{equation}
\phi_{\rm cal}(G_n)\coloneqq \mathds 1{\left\{|Q(G_n)|\geq \sqrt{n (w(\hat{\delta}_n,m)+u(\hat{\delta}_n,m)) } z_{\alpha/2} \right\}}\ .
\end{equation}
As $n\to\infty$, the type-I error of this test converges to $\alpha$:
\begin{equation}
\P_0\left(\phi_{\rm cal}(G_n)\neq 0\right)\to\alpha\ .
\end{equation}
Furthermore, when $\delta_0\neq\delta_1$ and $\tfrac{1}{2}<\gamma<1$ this test has vanishing type-II error as $n\rightarrow \infty$:
\begin{equation}
\P_1\left(\phi_{\rm cal}(G_n)=0\right)\to0\ .
\end{equation}
\end{theorem}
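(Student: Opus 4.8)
The plan is to mirror the structure already used for Theorems~\ref{thm:minimal_degree_test_known_delta}--\ref{thm:asymptotically_calibrated_minimal_degree_test_known_delta}, but now tracking the extra randomness coming from the plug-in estimator $\hat\delta_n$. The key identity is the decomposition
\begin{equation}
Q(G_n)=\big(N_m(n)-np_m(\delta_0)\big)-n\big(p_m(\hat\delta_n)-p_m(\delta_0)\big),
\end{equation}
where the first term is exactly $T(G_n)$ from the known-$\delta_0$ setting. Under $H_0$, the work of \cite{Baldassarri2021} gives that $N_m(n)-np_m(\delta_0)$ is asymptotically $\mathcal N(0,nw(\delta_0,m))$ with $w$ as in \eqref{w-var-def}; the results in \cite{Gao2017}, together with the delta method applied to the smooth map $\delta\mapsto p_m(\delta)$, give that $-n(p_m(\hat\delta_n)-p_m(\delta_0))$ is also asymptotically Gaussian of order $\sqrt n$, with variance governed by the Fisher information of the model and the derivative $p_m'(\delta_0)$ --- this is precisely what the combination $w(\delta_0,m)+u(\delta_0,m)$ encodes, with $v(\delta_0,m)$ from \eqref{v-var-def} being (up to constants) the per-edge Fisher information. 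The first step is therefore to establish a joint CLT: the pair $\big(N_m(n)-np_m(\delta_0),\ \sqrt n(\hat\delta_n-\delta_0)\big)$ converges jointly to a bivariate normal, so that their (linear combination forming the) sum $Q(G_n)/\sqrt n$ is asymptotically $\mathcal N(0,w(\delta_0,m)+u(\delta_0,m))$. The negative sign in \eqref{u-var-def} reflects that the fluctuations of $N_m(n)$ and of $\hat\delta_n$ are negatively correlated, so the variance of the difference is smaller than $w$ alone --- making the sign of $u$ natural rather than alarming.

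Given this joint CLT, the type-I error statement is routine: by continuity of $\delta\mapsto w(\delta,m)+u(\delta,m)$ at $\delta_0$ and consistency of $\hat\delta_n$ under $H_0$ (from \cite{Gao2017}), the data-dependent threshold $\sqrt{n(w(\hat\delta_n,m)+u(\hat\delta_n,m))}\,z_{\alpha/2}$ is $(1+o_{\P_0}(1))\sqrt{n(w(\delta_0,m)+u(\delta_0,m))}\,z_{\alpha/2}$, and Slutsky's theorem gives $\P_0(|Q(G_n)|\ge \text{threshold})\to 2(1-\Phi(z_{\alpha/2}))=\alpha$. One must check that $w(\delta_0,m)+u(\delta_0,m)>0$ so the threshold is well defined and non-degenerate; this should follow from the fact that it is a genuine limiting variance (hence nonnegative) together with a direct argument that it cannot vanish, or from a Cauchy--Schwarz-type inequality comparing the covariance structure.

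For the type-II error when $\tfrac12<\gamma<1$, I would reuse the mean-shift computation underlying Theorem~\ref{thm:minimal_degree_test_known_delta}: under $H_1$, $\E_1[N_m(n)]-np_m(\delta_0)=(1+o(1))\eta(\delta_0,\delta_1)n^\gamma$ with $\eta\ne0$ since $\delta_0\ne\delta_1$, and $N_m(n)-\E_1[N_m(n)]=\bigOp(\sqrt n)$ by Azuma--Hoeffding. The crux is then to show that the plug-in correction $np_m(\hat\delta_n)-np_m(\delta_0)$ is of \emph{strictly smaller order} of magnitude than $n^\gamma$, or, if it is of the same order $\bigOp(n^\gamma)$, that its leading constant differs from $\eta(\delta_0,\delta_1)$ so that $Q(G_n)=\bigOmegap(n^\gamma)$. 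Concretely one analyses $\hat\delta_n$ under $H_1$ via the score equation $\iota_n'(\hat\delta_n)=0$ in \eqref{eqn:iota_n_prime}: the $cn^\gamma$ vertices arriving after $\tau_n$ contribute an $\bigO(n^\gamma)$ perturbation to the (rescaled) score, which by a Taylor expansion around $\delta_0$ and the non-degeneracy of $\iota_n''$ translates into $\hat\delta_n-\delta_0=\bigOp(n^{\gamma-1})$ with an explicit leading constant; composing with $p_m'(\delta_0)$ gives the leading constant of $np_m(\hat\delta_n)-np_m(\delta_0)$. Showing this constant does not exactly cancel $\eta(\delta_0,\delta_1)$ --- equivalently, that the minimal-degree count and the MLE-based prediction respond to the changepoint in genuinely different ways --- is the main obstacle, and is exactly the ``careful characterization of the constants'' flagged before the theorem. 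Once $|Q(G_n)|=\bigOmegap(n^\gamma)$ is established, the threshold is $n^{1/2+o(1)}=o(n^\gamma)$ (since $\gamma>\tfrac12$ and $w(\hat\delta_n,m)+u(\hat\delta_n,m)=\bigOp(1)$ under $H_1$ too, using consistency of $\hat\delta_n$ for $\delta_0$ in this regime), so $\P_1(\phi_{\rm cal}(G_n)=0)\to0$ follows immediately.

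I expect the genuinely hard part to be the analysis of $\hat\delta_n$ under $H_1$ with enough precision to pin down the leading constant of $np_m(\hat\delta_n)-np_m(\delta_0)$ and to verify it is incompatible with $\eta(\delta_0,\delta_1)$; the joint CLT under $H_0$, while technically substantial, is essentially an application of \cite{Baldassarri2021} and \cite{Gao2017} combined with the delta method, and the remaining steps are Slutsky-type arguments.
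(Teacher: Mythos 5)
Your proposal is correct and follows essentially the same route as the paper: the paper obtains this theorem as an immediate consequence of the joint martingale CLT for $(N_m(n)/n,\hat\delta_n)$ followed by the delta method applied to $(x,y)\mapsto x-p_m(y)$ (exactly your joint-CLT step, yielding the variance $w(\delta_0,m)+u(\delta_0,m)$), combined with consistency of $\hat\delta_n$ and a Slutsky argument for the plug-in threshold. The non-cancellation you flag as the main obstacle is indeed the content of the paper's proof of Theorem~\ref{thm:main2}: the combined mean shift of $Q(G_n)$ under $H_1$ is $(1+\smallO(1))\,\alpha(\delta_0,\delta_1)n^\gamma$ with $\alpha(\delta_0,\delta_1)=c(\delta_0-\delta_1)\frac{m+\delta_0}{(2+\delta_1/m)(m+\delta_0+2+\delta_0/m)^2}\neq 0$, so the $\bigOp(\sqrt n)$ threshold is dominated when $\gamma>\tfrac{1}{2}$.
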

The proofs of Theorems \ref{thm:asymptotically_calibrated_minimal_degree_test_known_delta} and \ref{thm:asymptotically_calibrated_minimal_degree_test_unknown_delta} are an immediate consequence of the asymptotic normality of the test statistics as discussed in the next section, together with the consistency of $\hat\delta_n$ as an estimator of $\delta_0$.

\section{Asymptotic normality of test statistics}

In this section we characterize the asymptotic distribution of the proposed test statistics which allowed us to calibrate the tests in Section \ref{sec:asymptotically_calibrated_tests}.

When $\delta_0$ is known, the situation is relatively simple. Under the null model and when $m=1$, it is known that $N_m(n)$ with $k \geq m$ admits a central limit theorem \cite{Samorodnitsky2016}. In particular, this shows that $N_m(n)$ is asymptotically normally distributed. Furthermore, \cite{Baldassarri2021} extends these results to the general case $m\geq 1$, making them applicable to our setting. For the case of unknown $\delta_0$ the situation is a bit more complicated.  Recall that our test statistic is $Q(G_n) = N_m(n)-n p_m(\hat\delta_n)$. It is known from \cite{Gao2017} that, under the null model, $\hat\delta_n$ is asymptotically normal. This does not, however, immediately imply that $Q(G_n)$ is also asymptotically normal under the null model. The results in \cite{Baldassarri2021} establish that $(N_m(n),N_{m+1}(n),\ldots)$ is asymptotically normal (under the null model), strongly hinting at asymptotic normality of $Q(G_n)$. Furthermore, even under the alternative model, one might expect asymptotic normality of the test statistics, with exactly the same asymptotic variance, as the number of vertices that enter after the (late) changepoint is too small to change the asymptotic variance.

Let $\mathcal{N}(\mu,\sigma^2)$ denote the normal distribution with mean $\mu$ and variance $\sigma^2$, and let $\xrightarrow{\smash{\raisebox{-1.5pt}{$\scriptstyle{}D$}}}$ denote convergence in distribution. The following theorem, proved in Section~\ref{sec:proof_asymptotic_normality_test_statistics}, establishes the asymptotic properties of the test statistics.

\begin{theorem}[Asymptotic normality of test statistics]\label{thm:asymptotic_normality_test_statistics}
Recall the definitions of $w$ and $u$ in \eqref{w-var-def} and \eqref{u-var-def} respectively. As $n\to\infty$,
\begin{align}\label{eq:asymptotic_normality_known_delta_test_statistic_null_hypothesis}%
\frac{T(G_n)}{\sqrt{n}} \xrightarrow{D} \mathcal{N}\left(0,w(\delta_0,m)\right)\ .
\end{align}%
Moreover, under Assumption~\ref{ass:parameter_space} and the null model, as $n\to\infty$,
\begin{align}%
\frac{Q(G_n)}{\sqrt{n}}=\frac{N_m(n)-np_m(\hat\delta_n)}{\sqrt{n}}\ \xrightarrow{D} \mathcal{N}\left(0,w(\delta_0,m)+u(\delta_0,m)\right)\ .\label{eq:asymptotic_normality_unknown_delta_test_statistic}
\end{align}%
Furthermore, under the alternative model with $\gamma\in(0,1)$, as $n\to\infty$,
\begin{align}\label{eq:asymptotic_normality_known_delta_test_statistic_alternative_hypothesis}%
\frac{T(G_n)-\E_1[T(G_n)]}{\sqrt{n}}\xrightarrow{D} \mathcal{N}\left(0,w(\delta_0,m)\right)\ .
\end{align}%
Moreover, under the alternative model with $\gamma\in(\tfrac{1}{2},1)$ and Assumption~\ref{ass:parameter_space}, as $n\to\infty$,
\begin{align}\label{eq:asymptotic_normality_unknown_delta_test_statistic_alternative_hypothesis}%
\frac{Q(G_n)-\E_1[Q(G_n)]}{\sqrt{n}}\xrightarrow{D} \mathcal{N}\left(0,w(\delta_0,m)+u(\delta_0,m)\right)\ .
\end{align}%
\end{theorem}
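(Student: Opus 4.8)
\emph{Strategy.} The crucial structural point is that both statistics depend on $G_n$ only through the degree-count vector $N(n)\coloneqq(N_m(n),N_{m+1}(n),\dots)$: this is immediate for $T(G_n)=N_m(n)-np_m(\delta_0)$, and it holds for $Q(G_n)=N_m(n)-np_m(\hat\delta_n)$ because $\iota_n(\delta)$ depends on $G_n$ only through $N(n)$ (via $N_{>k}(n)=\sum_{j>k}N_j(n)$) and $\delta$-dependent deterministic terms, so $\hat\delta_n=\hat\delta_n(N(n))$. All four convergences thus follow from a joint CLT for $N(n)$ transported through the (asymptotically linear) maps $N\mapsto T$ and $N\mapsto Q$. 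Quantitatively, write $Q(G_n)=T(G_n)-n(p_m(\hat\delta_n)-p_m(\delta_0))$; using that $\hat\delta_n$ solves $\iota_n'(\hat\delta_n)=0$ together with the uniform control of $\iota_n'',\iota_n'''$ near $\delta_0$ from \cite{Gao2017} (under Assumption~\ref{ass:parameter_space}), a Taylor expansion gives $\hat\delta_n-\delta^\star=\iota_n'(\delta^\star)/I(\delta_0)+\smallOp(n^{-1/2})$ with $I(\delta_0)$ the limiting Fisher information, where $\delta^\star=\delta_0$ under $\P_0$ and, under $\P_1$, $\delta^\star=\delta^\star_n$ is the deterministic solution of $\E_1[\iota_n'(\delta^\star_n)]=0$, which satisfies $\delta^\star_n\to\delta_0$. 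Taylor-expanding $p_m$ about $\delta^\star$ and absorbing the quadratic term using $\hat\delta_n-\delta^\star=\bigOp(n^{-1/2})$, one obtains
\[
\frac{Q(G_n)-\E[Q(G_n)]}{\sqrt n}=\frac{N_m(n)-\E[N_m(n)]}{\sqrt n}-\frac{p_m'(\delta_0)}{I(\delta_0)}\,\sqrt n\,\bigl(\iota_n'(\delta^\star)-\E[\iota_n'(\delta^\star)]\bigr)+\smallOp(1),
\]
while under $\P_0$ one notes $\E_0[N_m(n)]-np_m(\delta_0)=\bigO(1)$ and $\E_0[\iota_n'(\delta_0)]=0$ (the exact likelihood score has mean zero), so the deterministic centerings in $T$ and $Q$ agree with the true means to leading order.

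\emph{Null model and variance identification.} Under $\P_0$, \cite{Baldassarri2021} (and \cite{Samorodnitsky2016} for $m=1$) gives that $(N_j(n)-\E_0[N_j(n)])_{j\ge m}/\sqrt n$ converges to a centered Gaussian field with explicit covariance $\Sigma$; every finite linear combination is then asymptotically normal, and the infinite combination $\sqrt n\,\iota_n'(\delta^\star)=\tfrac1{\sqrt n}\sum_{k\ge m}(k+\delta^\star)^{-1}(N_{>k}(n)-\E[N_{>k}(n)])+\smallOp(1)$ is handled by truncating the sum at level $K$ and bounding the tail in $L^2$ uniformly in $n$ via $\mathrm{Var}(N_{>k}(n))=\bigO(nP_{>k})$ with $P_{>k}=\sum_{j>k}p_j(\delta_0)=\bigO(k^{-(2+\delta_0/m)})$, then letting $K\to\infty$. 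This yields joint asymptotic normality of $(T(G_n),\sqrt n\,\iota_n'(\delta_0))/\sqrt n$, hence of $(T(G_n),Q(G_n))/\sqrt n$ through the display, proving \eqref{eq:asymptotic_normality_known_delta_test_statistic_null_hypothesis} and \eqref{eq:asymptotic_normality_unknown_delta_test_statistic} once the variances are identified: $\lim\mathrm{Var}(T(G_n)/\sqrt n)=\Sigma_{mm}=w(\delta_0,m)$ (a lengthy but routine evaluation of $\Sigma$); $\lim\mathrm{Var}(\sqrt n\,\iota_n'(\delta_0))=I(\delta_0)=v(\delta_0,m)$, where the last equality uses the telescoping identity $(2+\delta_0/m)P_{>k}=(k+\delta_0)p_k(\delta_0)$ implied by \eqref{eqn:recursion_p_k}; and the score identity $\mathrm{Cov}_\delta(N_m(n),(n+1)\iota_n'(\delta))=\partial_\delta\E_\delta[N_m(n)]=(1+\smallO(1))np_m'(\delta_0)$ gives $\lim\mathrm{Cov}(T(G_n)/\sqrt n,\sqrt n\,\iota_n'(\delta_0))=p_m'(\delta_0)$. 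Substituting into the variance of the right-hand side of the display gives $w(\delta_0,m)-p_m'(\delta_0)^2/v(\delta_0,m)=w(\delta_0,m)+u(\delta_0,m)$, using $p_m'(\delta_0)=-m^2/(\delta_0+m(2+m+\delta_0))^2$.

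\emph{Alternative model and the main obstacle.} For \eqref{eq:asymptotic_normality_known_delta_test_statistic_alternative_hypothesis} it suffices to prove $(N_m(n)-\E_1[N_m(n)])/\sqrt n\to\mathcal N(0,w(\delta_0,m))$, which holds for every $\gamma\in(0,1)$: in the Doob-martingale CLT for $N_m(n)$ under $\P_1$, the $n-\tau_n=cn^\gamma=\smallO(n)$ steps after the changepoint contribute only $\smallO(n)$ to the order-$n$ predictable quadratic variation, leaving the limiting variance equal to $w(\delta_0,m)$; more generally the whole field converges to the same Gaussian with covariance $\Sigma$ (Lemma~\ref{lem:joint_CLT}). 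For \eqref{eq:asymptotic_normality_unknown_delta_test_statistic_alternative_hypothesis} one reruns the delta-method argument under $\P_1$, using Lemma~\ref{lem:joint_CLT} in place of \cite{Baldassarri2021} and the consistency of $\hat\delta_n$ under $\P_1$ (which forces $\gamma<1$), so that the limiting variance of $(Q(G_n)-\E_1[Q(G_n)])/\sqrt n$ is again $w(\delta_0,m)+u(\delta_0,m)$. The main obstacle is precisely this last step: under $\P_1$ the MLE obeys $\hat\delta_n-\delta_0=\bigOp(n^{\gamma-1})$, so after $\sqrt n$-scaling its bias is of the \emph{diverging} order $n^{\gamma-1/2}$ when $\gamma>\tfrac12$, and one must split this bias off cleanly — exactly the sharp analysis of $np_m(\hat\delta_n)-np_m(\delta_0)$ underlying Theorem~\ref{thm:main2}, which is also why $\gamma\in(\tfrac12,1)$ is imposed — leaving a genuine $\bigOp(n^{-1/2})$ fluctuation around the deterministic proxy $\delta^\star_n$; in parallel one must re-establish, under $\P_1$, the uniform derivative bounds on $\iota_n$ (to control all Taylor remainders at the $\sqrt n$ scale) and Lemma~\ref{lem:joint_CLT} itself, i.e.\ that the late changepoint leaves the \emph{limiting covariance} $\Sigma$ of the degree counts unchanged, this last point being the conceptual heart of the argument.
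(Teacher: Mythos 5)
Your overall architecture is sound and, for the alternative-model statements, essentially coincides with the paper's: the paper also reduces \eqref{eq:asymptotic_normality_known_delta_test_statistic_alternative_hypothesis} and \eqref{eq:asymptotic_normality_unknown_delta_test_statistic_alternative_hypothesis} to the null case by showing that the $n-\tau_n=cn^\gamma$ post-changepoint steps are negligible at scale $\sqrt{n}$ (via Azuma--Hoeffding on the Doob martingale started at $G_{\tau_n}$) and that the pre-changepoint part reproduces the null-model Gaussian field (Lemma~\ref{lem:joint_CLT}) and the null-model score fluctuations (Lemma~\ref{lem:iota_prime_normality}); your identification of the diverging bias of $\hat\delta_n$ under $\P_1$ and of the deterministic proxy $\delta^\star_n=\tilde\delta_n$ as the main obstacle is exactly right. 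Where you genuinely diverge is in the null-model joint normality of $(N_m(n),\hat\delta_n)$. The paper (Proposition~\ref{prop:joint_asymptotic_normality_degree_counts_estimator_delta_zero}) applies a multivariate martingale CLT directly to the pair of increment arrays $(X_{t,i},Y_{t,i})$ of \cite{Baldassarri2021} and \cite{Gao2017} and computes the cross term $\E[X_{t,i}Y_{t,i}\mid G_{t,i-1}]$ explicitly, which yields the off-diagonal entry $-b(\delta_0,m)/v(\delta_0,m)$ by direct summation. You instead (a) upgrade the finite-dimensional Gaussian-field convergence of $(N_j(n))_j$ to the infinite linear combination $\sqrt{n}\,\iota_n'(\delta_0)$ by truncation plus a uniform $L^2$ tail bound, and (b) identify the limiting covariance via the Bartlett-type score identity $\mathrm{Cov}_\delta\bigl(N_m(n),(n+1)\iota_n'(\delta)\bigr)=\partial_\delta\E_\delta[N_m(n)]$. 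This is an elegant shortcut that does reproduce the paper's value $p_m'(\delta_0)=-b(\delta_0,m)$ and hence the correct variance $w+u=w-p_m'(\delta_0)^2/v$.

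However, both ingredients of your shortcut are asserted rather than proved, and neither is trivial. First, the tail bound $\mathrm{Var}(N_{>k}(n))=\bigO(nP_{>k})$ uniformly in $n$ does not follow from the Azuma bound (which only gives $\bigO(n)$) nor from the product-topology convergence in \cite{Baldassarri2021}; it requires a genuine second-moment computation (or the explicit covariance formulas of \cite{Baldassarri2021} together with summability estimates), and the paper explicitly remarks that finite-dimensional convergence ``is unfortunately not sufficient'' precisely at this point. Second, the score identity gives you $\partial_\delta\E_\delta[N_m(n)]$ exactly, but to conclude that this equals $(1+\smallO(1))\,np_m'(\delta_0)$ you need that the $\bigO(1)$ error in $\E_\delta[N_m(n)]=np_m(\delta)+\bigO(1)$ has derivative $\smallO(n)$ in $\delta$; the bound \eqref{eqn:bounded_difference} controls the error itself, not its $\delta$-derivative, so an extra argument (e.g., uniformity of the error bound on a neighbourhood plus analyticity of the recursion for $\E_\delta[N_m(n)]$) is needed. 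The paper's direct computation of $\E[X_{t,i}Y_{t,i}\mid G_{t,i-1}]$ avoids both issues at the cost of a longer explicit calculation. If you fill in these two steps, your route is a legitimate and arguably more conceptual alternative for the null-model covariance; as written, they are the two concrete gaps.
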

Note that \eqref{eq:asymptotic_normality_known_delta_test_statistic_null_hypothesis} has been proved in \cite{Baldassarri2021}. In the second statement \eqref{eq:asymptotic_normality_unknown_delta_test_statistic}, $u(\delta_0,m)$ captures the adjustment in the variability in the test statistic by using an estimate of $\delta_0$, instead of the actual value. The convergence results \eqref{eq:asymptotic_normality_known_delta_test_statistic_null_hypothesis} and \eqref{eq:asymptotic_normality_unknown_delta_test_statistic} provide an avenue for asymptotically exact calibration of the proposed test as given in Theorems~\ref{thm:asymptotically_calibrated_minimal_degree_test_known_delta} and \ref{thm:asymptotically_calibrated_minimal_degree_test_unknown_delta} (equivalently, for computation of asymptotically exact $p$-values).

To establish asymptotic normality under the alternative model, we exploit the corresponding statements under the null model, together with a correction term quantifying the effects due to the presence of a changepoint (partially relying on the arguments in Theorems~\ref{thm:minimal_degree_test_known_delta} and \ref{thm:main2}). It is important to remark that, for the convergence result \eqref{eq:asymptotic_normality_unknown_delta_test_statistic_alternative_hypothesis}, the assumption $\gamma>\tfrac{1}{2}$ appears to be merely technical, and it should be possible to drop it. Finally, note that the asymptotic variances remain the same,
whether one is considering the null or alternative model, as expected. The proof of Theorem \ref{thm:asymptotic_normality_test_statistics} is carried out in Section~\ref{sec:proof_asymptotic_normality_test_statistics}.

\section{Numerical experiments}

In this section we examine the properties of the proposed tests using simulation. This serves a two-fold purpose, namely to empirically assess the validity of the theoretical guarantees given, as well as to investigate the finite-sample properties of the tests.

To assess the finite sample properties of the asymptotically calibrated tests we conduct several numerical experiments. For simplicity we fix $\delta_0=0$ for all the experiments (this is the so-called linear preferential attachment model in \cite{Barabasi1999}). We take $m=5$, $c=1$, $\gamma=\frac{3}{4}$, and $\delta_1\in\{-1,0,1\}$. Note that $\delta_1=\delta_0=0$ corresponds to the null model. We consider graphs of different sizes, namely $n\in\{1000,2000,5000,10000,20000,50000,100000,200000\}$, and for each value of $n$ we generate $B=2000$ independent graphs using the preferential attachment model in \eqref{eq:attachment_function_general}. Specifically, for the three cases $\delta_1=\{-1,0,1\}$ and each value $n$ we obtain $\{g_n^{(b)}\}_{b=1}^B$ graphs.

Note that as $n$ increases the relative distance between the changepoint and $n$ decreases considerably, and only a minute part of the graph is affected after the changepoint. For instance, for $n=5000$ there are only 594 vertices that join the graph after the changepoint, so about 12\% of the vertices. However, for $n=200000$ only 9457 vertices join after the changepoint, a mere 4.7\% of the total vertices.

\begin{figure}
\centering
\subfigure[Power of the tests (known $\delta_0$ and $\alpha=0.05$).]{\includegraphics[width=0.45\textwidth]{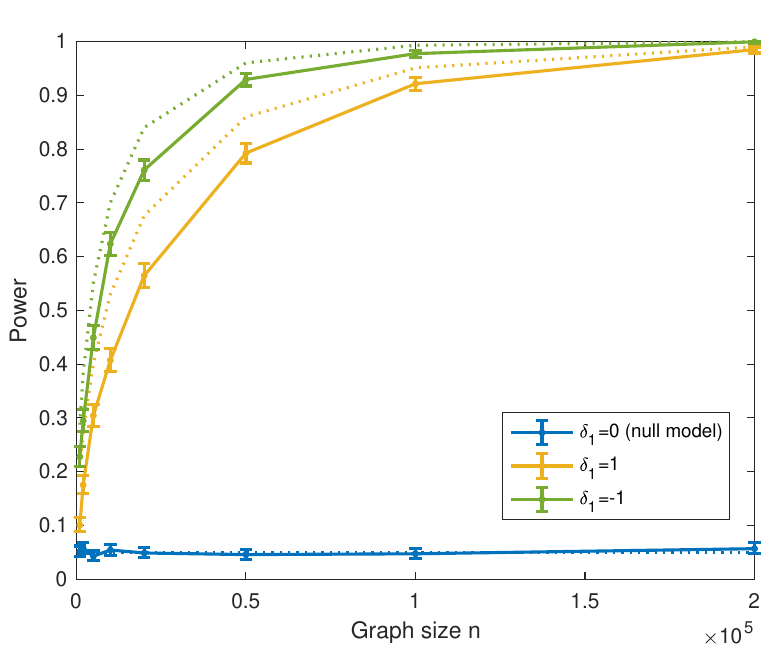}}
\hspace{0.05\textwidth}
\subfigure[Power of the tests (unknown $\delta_0$ and $\alpha=0.05$).]{\includegraphics[width=0.45\textwidth]{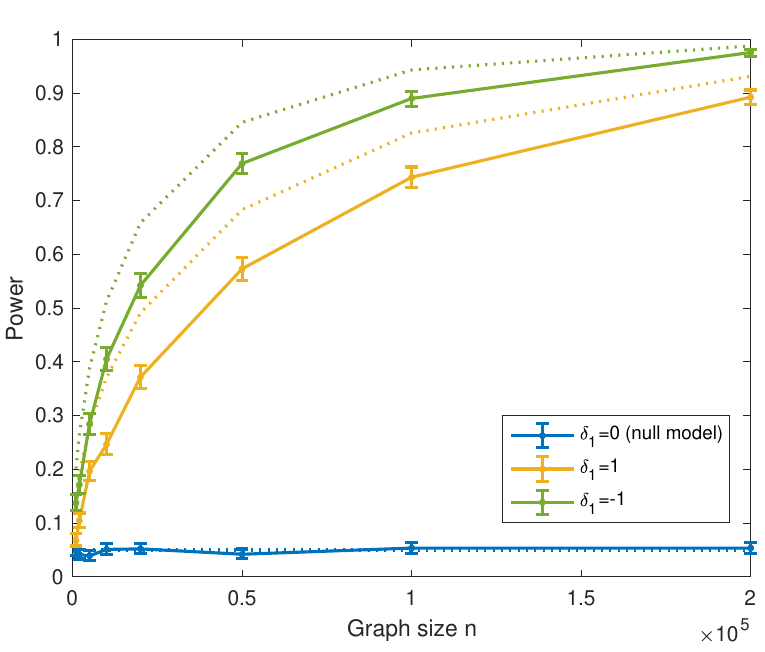}}
\caption{Power of the tests calibrated using asymptotic normality for $m=5$. Panel (a) corresponds to the case of known $\delta_0$ and panel (b) corresponds to the unknown $\delta_0$ case. Both plots consider a fixed significance level testing with level $\alpha=0.05$. The solid lines correspond to the power estimated using $B=2000$ graph samples, and (the pointwise 95\% confidence bands are computed using the Clopper-Pearson exact method. The dashed lines are estimates of the power based on the asymptotic characterization of the test statistics.}\label{fig:power}
\end{figure}

In Figure~\ref{fig:power} we depict the power of the two proposed tests. For concreteness we consider fixed significance testing at level $\alpha=0.05$ (qualitatively the results are similar for other significance levels). We clearly see that both tests are well calibrated, even for small values of $n$. Also, as expected, the power increases as a function of $n$. As intuitively expected, the test that does not assume knowledge of $\delta_0$ has slightly less power than the test making use of that knowledge. Finally, there is an asymmetry of the power depending on whether $\delta_1>\delta_0$ or $\delta_1<\delta_0$, the later scenario leading to higher power. This is expected, as for smaller $\delta_1$ values the empirical degree distribution has heavier tails, and detecting the presence of a changepoint becomes easier.

\begin{figure}
\centering
\subfigure[$T(G_n)$ with $n=100$ and $\delta_0 = \delta_1 = 0$.]{\includegraphics[width=0.45\textwidth]{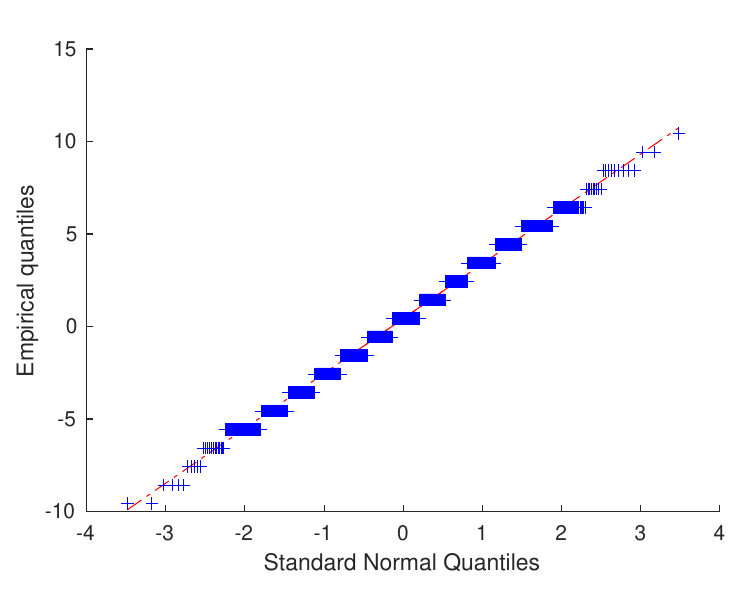}}
\hspace{0.05\textwidth}
\subfigure[$T(G_n)$ with $n=1000$ and $\delta_0 = \delta_1 = 0$.]{\includegraphics[width=0.45\textwidth]{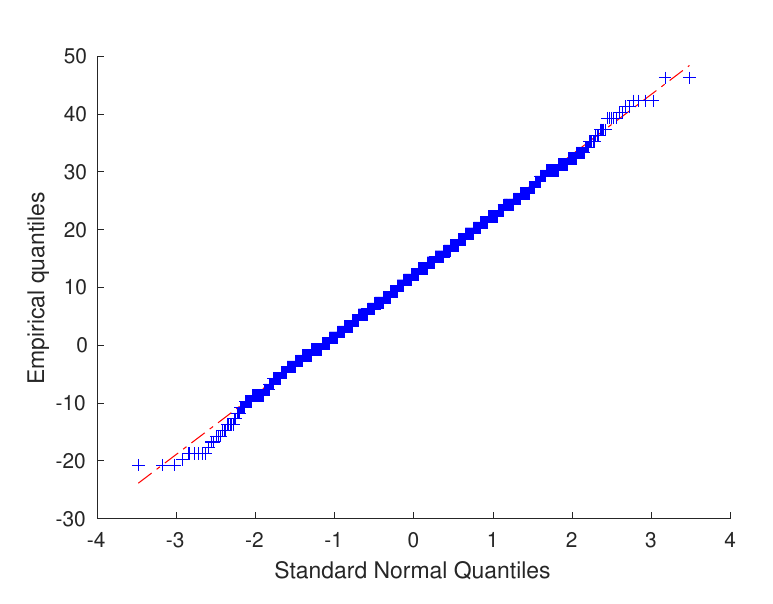}}
\subfigure[$Q(G_n)$ with $n=100$ and $\delta_0 = \delta_1 = 0$.]{\includegraphics[width=0.45\textwidth]{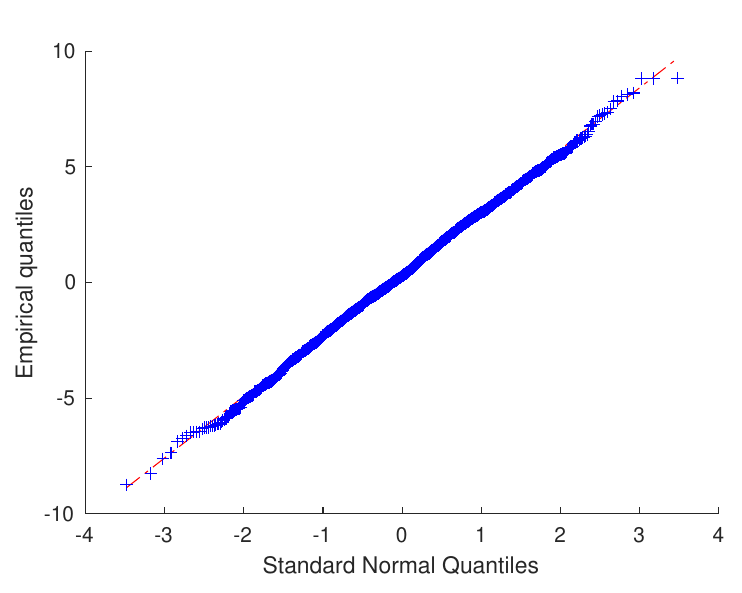}}
\hspace{0.05\textwidth}
\subfigure[$Q(G_n)$ with $n=1000$ and $\delta_0 = \delta_1 = 0$.]{\includegraphics[width=0.45\textwidth]{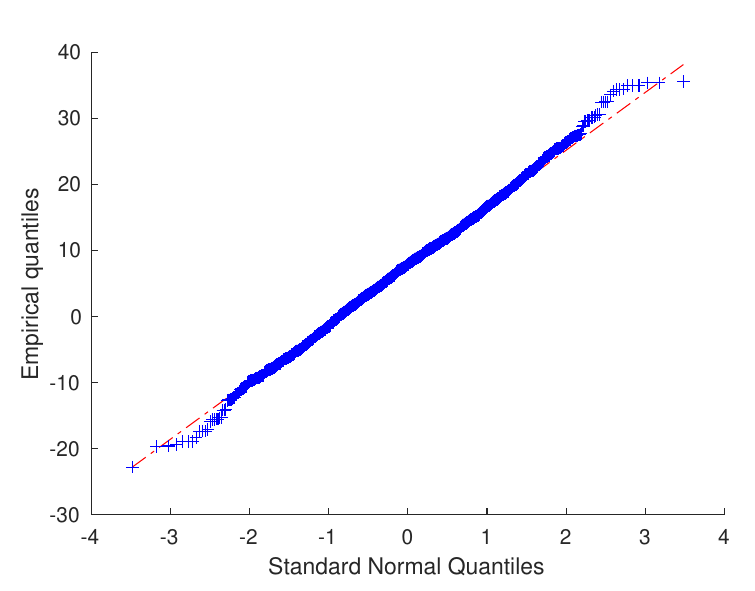}}
\caption{Normal QQ plots of the test statistics under the null model, for $n\in\{100,1000\}$. Panels (a) and (b) correspond to $T(G_n)$ (known $\delta_0$ statistic) and panels (c) and (d) correspond to $Q(G_n)$ (unknown $\delta_0$ statistics). Even for very small values of $n$ distribution of the statistics is very well approximated by a normal distribution.}\label{fig:asym_norm}
\end{figure}

To illustrate the results of Theorem~\ref{thm:asymptotic_normality_test_statistics}, in Figure~\ref{fig:asym_norm} we display normal quantile-quantile plots of the test statistics for very small values of $n$. We see that, even for $n=100$ (so, a graph with $nm=500$ edges) both test statistics are approximately normally distributed. In order to have a better assessment of the mean and variance of the test statistics we compute (for $\delta_1\in\{-1,0,1\}$) the first two empirical moments, and compare them with their asymptotic counterparts, both under the null and alternative hypothesis. Define
$$\hat\mu^{\sss (T)}_n \coloneqq \frac{1}{B}\sum_{b=1}^B T(g_n^{\sss (b)})$$
and 
$$\hat v^{\sss (T)}_n \coloneqq \frac{1}{B}\sum_{b=1}^B (T(g_n^{\sss (b)})-\hat\mu^{\sss (T)}_n)^2\ ,$$
and the analogous definitions of $\hat\mu^{\sss (Q)}_n$ and $\hat v^{\sss (Q)}_n$. In Table~\ref{tbl:asymptotic_mean_var} we compare these values (adequately rescaled) with the expected asymptotic values. As can be seen, the empirical variance closely matches the asymptotic variance, both under the null and alternative models, even for small values of $n$. This is in agreement with Theorem~\ref{thm:asymptotic_normality_test_statistics}. For the mean of the statistics, one sees that the scaling by $n^\gamma$ and the corresponding leading constant is also accurate, but finite sample effects are more evident under the alternative model when $n$ is small.

\begin{table}
\centering\small
\caption{Numerical estimates of (rescaled) mean and variance of the test statistics, and comparison with corresponding asymptotic values. The estimates above are based on $B=2000$ independently generated graphs.}\label{tbl:asymptotic_mean_var}
\subfigure[Null model $\delta_1=\delta_0=0.$]{
\begin{tabular}{p{1.2cm}<{\raggedleft}p{2.2cm}<{\raggedleft}p{2.2cm}<{\raggedleft}p{2.2cm}<{\raggedleft}p{3.2cm}<{\raggedleft}}
\toprule
$n$     & $\hat\mu_n^{\sss (T)}$ & $\hat\mu_n^{\sss (Q)}$ & $\hat v_n^{\sss (T)}/n$ & $\hat v_n^{\sss (Q)}/n$\\
\midrule
1000    &   $0.0127$  &   $0.0105$  &   $0.1014$  &   $0.0801$\\
2000    &   $0.0017$  &   $0.0042$  &   $0.1016$  &   $0.0795$\\
5000    &   $0.0056$  &   $0.0041$  &   $0.1009$  &   $0.0789$\\
10000   &   $-0.0052$ &   $-0.0038$ &   $0.1049$  &   $0.0831$\\
20000   &   $0.0007$  &   $0.0012$  &   $0.1034$  &   $0.0828$\\
50000   &   $0.0049$  &   $-0.0009$ &   $0.0986$  &   $0.0780$\\
100000  &   $-0.0066$ &   $-0.0054$ &   $0.1000$  &   $0.0814$\\
200000  &   $0.0031$  &   $0.0046$  &   $0.1048$  &   $0.0811$\\
\midrule
        &   $\eta(\delta_0,\delta_0)$   &   $\alpha(\delta_0,\delta_0)$ & $w(\delta_0,m)$ & $w(\delta_0,m)+u(\delta_0,m)$\\
        &   $=0$                        &   $=0$                        &   $=0.1020$     &   $=0.0811$\\
\bottomrule   
\end{tabular}
}
\subfigure[Alternative model $\delta_0=0$ and $\delta_1=1.$]{
\begin{tabular}{p{1.2cm}<{\raggedleft}p{2.2cm}<{\raggedleft}p{2.2cm}<{\raggedleft}p{2.2cm}<{\raggedleft}p{3.2cm}<{\raggedleft}}
\toprule
$n$ & $\hat\mu_n^{\sss (T)}/n^\gamma$ & $\hat\mu_n^{\sss (Q)}/n^\gamma$ & $\hat v_n^{\sss (T)}/n$ & $\hat v_n^{\sss (Q)}/n$\\
\midrule
1000    &   $-0.0460$ &   $-0.0276$ &   $0.0944$  &   $0.0772$\\
2000    &   $-0.0487$ &   $-0.0308$ &   $0.0992$  &   $0.0776$\\
5000    &   $-0.0554$ &   $-0.0371$ &   $0.1017$  &   $0.0826$\\
10000   &   $-0.0558$ &   $-0.0375$ &   $0.0994$  &   $0.0799$\\
20000   &   $-0.0574$ &   $-0.0395$ &   $0.1041$  &   $0.0799$\\
50000   &   $-0.0594$ &   $-0.0409$ &   $0.1033$  &   $0.0832$\\
100000  &   $-0.0610$ &   $-0.0421$ &   $0.1042$  &   $0.0809$\\
200000  &   $-0.0610$ &   $-0.0427$ &   $0.0974$  &   $0.0759$\\
\midrule
        &   $\eta(\delta_0,\delta_1)$   &   $\alpha(\delta_0,\delta_1)$ & $w(\delta_0,m)$ & $w(\delta_0,m)+u(\delta_0,m)$\\
        &   $=-0.0649$                  &   $=-0.0464$                  &   $=0.1020$     &   $=0.0811$\\
\bottomrule   
\end{tabular}
}
\subfigure[Alternative model $\delta_0=0$ and $\delta_1=-1.$]{
\begin{tabular}{p{1.2cm}<{\raggedleft}p{2.2cm}<{\raggedleft}p{2.2cm}<{\raggedleft}p{2.2cm}<{\raggedleft}p{3.2cm}<{\raggedleft}}
\toprule
$n$ & $\hat\mu_n^{\sss (T)}/n^\gamma$ & $\hat\mu_n^{\sss (Q)}/n^\gamma$ & $\hat v_n^{\sss (T)}/n$ & $\hat v_n^{\sss (Q)}/n$\\
\midrule
1000    &  $0.0672$  &   $0.0436$  &   $0.1087$  &   $0.0831$\\
2000    &  $0.0679$  &   $0.0443$  &   $0.1020$  &   $0.0760$\\
5000    &  $0.0700$  &   $0.0463$  &   $0.1033$  &   $0.0799$\\
10000   &  $0.0725$  &   $0.0491$  &   $0.0952$  &   $0.0759$\\
20000   &  $0.0713$  &   $0.0488$  &   $0.1049$  &   $0.0817$\\
50000   &  $0.0738$  &   $0.0509$  &   $0.1043$  &   $0.0833$\\
100000  &  $0.0743$  &   $0.0515$  &   $0.1057$  &   $0.0875$\\
200000  &  $0.0750$  &   $0.0522$  &   $0.0998$  &   $0.0793$\\
\midrule
        &   $\eta(\delta_0,\delta_1)$   &   $\alpha(\delta_0,\delta_1)$ & $w(\delta_0,m)$ & $w(\delta_0,m)+u(\delta_0,m)$\\
        &   $=0.0794$                   &   $=0.0567$                   &   $=0.1020$     &   $=0.0811$\\
\bottomrule   
\end{tabular}
}
\end{table}

Finally, making use of Theorem~\ref{thm:asymptotic_normality_test_statistics} we can compare the empirical power with an estimate based on the asymptotic normality of the statistic. Namely, we know the test statistics are asymptotically normal with exactly the same variance, and a small mean-shift proportional to $n^\gamma(1+\smallO(1))$, where the proportionality constant is given by $\eta(\delta_0,\delta_1)$ from \eqref{eqn:shift_T} and $\alpha(\delta_0,\delta_1)$ from \eqref{eqn:shift_Q} for $T(G_n)$ and $Q(G_n)$ respectively. Based on this, one can get an estimate for the power of the tests for different values of $\delta_1$. This is shown by the dashed lines in Figure~\ref{fig:power}. As one can see, although not terribly accurate, the estimates capture the exact behavior of the empirically observed power. This lack of accuracy is not unexpected, as all we know is that the mean-shifts are of the form $\text{const}(\delta_0,\delta_1,m) n^\gamma+\smallO(n^\gamma)$. However, the remainder term might still have an order only slightly smaller than $n^\gamma$, which will lead to rather poor power estimates for small values of $n$.

\section{Discussion and open problems} \label{sec:changepoint_detect_discussion}
In this section, we compare our results to the literature and state some open problems.

\subsection{Early changepoint} 
In previous work \cite{Bhamidi2018,Banerjee2018,Cirkovic2022}, the case of an {\em early} changepoint was considered for preferential attachment trees, i.e., for $m=1$. Thus, our work extends this setting from trees to graphs, as well as from an early changepoint to a late one. Arguably, the latter case is more relevant in practice, since one would rather detect a changepoint quickly, meaning, close to the time after which it occurs. This setting corresponds to a changepoint close to the time of observation of the final network.

\subsection{Dynamical graph observations} 
It would be of interest to extend our results to a {\em dynamic} setting, where we detect the changepoint as the graph changes dynamically. Bear in mind though that we currently only assume that we observe the graph at the final time, and observing the graph dynamically thus provides much more information. Thus, it is an interesting extension to devise an appropriate setting where we only observe {\em partial information} on the network, while still detecting the changepoint dynamically. There are several settings that could be of interest. In the first, one observes the network snapshots only at multiples of $n^\gamma$. In the second, we assume that we only dynamically observe information about the degree counts, and not the entire network. We defer such problems to future work.

\subsection{Lower bounds} Conjecture~\ref{conj:lower_bounds} states that no test will be powerful when $\gamma<\tfrac{1}{2}$. Proving such lower bounds in the context of preferential attachment models is challenging, due to the latent nature of these models. However, part (i) of the conjecture might be approached by relying on the asymptotic normality characterization in Lemma~\ref{lem:joint_CLT}, together with bounds on $\E_1[N_k(n)]-\E_0[N_k(n)]$ obtained using the methods developed in this paper.

\subsection{Boundary case \texorpdfstring{$\gamma=1$}{ɣ=1}} Note that in Theorems~\ref{thm:minimal_degree_test_known_delta} and \ref{thm:main2} the case $\gamma=1$ is excluded. This is in contrast with the results in \cite{Bhamidi2018,Banerjee2018,Cirkovic2022}. The proof of the two theorems relies on Proposition~\ref{prp:A}, that explicitly excludes the case $\gamma=1$. It should be possible to extend that result for $\gamma=1$. Specifically, the current argument quantifies the contribution to $\E_1[N_m(n)]-np_m(\delta_0)$ made by vertices that arrived after the changepoint. Due to the late changepoint, most of those vertices will have degree $m$ in $G_n$. However, for $\gamma=1$, a small, but non-vanishing fraction of those vertices will have higher degree. Therefore, to extend the result, this needs to be quantified, and a slightly more refined argument will be needed, where the role of the parameter $c$ will become much more prevalent. Extending the result of Theorem~\ref{thm:main2} to this setting will likely be significantly more challenging, as it requires extending Proposition~\ref{prp:ana-diff-means}, where the assumption that $\gamma<1$ was crucially used to bound the terms of order $\smallO(n^\gamma)$. On the other hand, when $\gamma=1$, our test statistic will likely not be a good practical choice (particularly when $c$ is large), and the statistics used in \cite{Bhamidi2018,Banerjee2018} will likely lead to more powerful tests.

\subsection{Other test statistics} Note that information about the presence of a changepoint is present not only in $N_m(n)$, but also on other counts of low-degree vertices. With this in mind, a test based on $(N_m(n),N_{m+1}(n))$ can be potentially more powerful (in a finite sample sense) than the test we proposed. Although we expect such tests to have exactly the same asymptotic performance, they can perform much better for finite $n$. An interesting avenue of research is to identify, in a principled way, statistics that lead to tests that have higher power than the ones proposed.

\subsection{Boundary case \texorpdfstring{$\gamma=\tfrac{1}{2}$}{ɣ=1/2}} When $\gamma=\tfrac{1}{2}$, the fluctuations of $N_m(n)$ around its mean under $H_0$ are of the same order as $\E_1[N_m(n)]-p_m(\delta_0)$. Moreover, since a central limit theorem holds for $N_m(n)-\E_1[N_m(n)]$ under $H_1$, with the same limiting variance as under $H_0$ (cf.~Theorem \ref{thm:asymptotic_normality_test_statistics}), it follows that, when $\gamma=\tfrac{1}{2}$, the type-II error of our test is strictly bounded away from zero. In other words, when $\gamma=\tfrac{1}{2}$, a large value of $N_m(n)-p_m$ can be explained either by a large deviation away from $\E_0[N_m]$ under $H_0$, or by a deviation around $\E_1[N_m]$ under $H_1$.

\section{Powerful test for known \texorpdfstring{$\delta_0$}{δ₀}: Proof of Theorem~\ref{thm:minimal_degree_test_known_delta}}\label{sec:proof_known}

The main idea of the proof is to decompose the test statistic $T(G_n)$ in two terms:
\begin{equation}\label{eqn:decomposition_T}
T(G_n)=N_m(n)-np_m(\delta_0)=\underbrace{\E_\ell[N_m(n)]-np_m(\delta_0)}_{\coloneqq A}+\underbrace{N_m(n)-\E_\ell[N_m(n)]}_{\coloneqq B}\ ,
\end{equation}
where $\ell\in\{0,1\}$. The characterization of the stochastic term $B$ is the same under both the null and alternative models, and follows a somewhat standard argument. Let $\ell\in\{0,1\}$ be fixed. Define the stochastic process $\{M_t\}_{t=1}^n$ such that
$$M_t\coloneqq \E_\ell[N_m(n)|G_t]\ .$$
This is a Doob martingale \cite[Lemma 8.5]{VanderHofstad2017}, and such that $\E_\ell[N_m(n)]=M_1$ and $N_m(n)=M_n$. Furthermore, by \cite[Lemma 8.6]{VanderHofstad2017}, we know that, for every $t\in\{2,\ldots,n\}$,
$|M_n-M_{n-1}|\leq 2m$ almost surely. Although strictly speaking these two lemmas were not stated for our model, their arguments do not depend on the specific sequence $\delta(t)$ in \eqref{eq:attachment_function_general}. Therefore, those results still hold and follow simply from the fact that every step in time we add precisely $m$ edges to the graph. With this in hand, we can directly apply the Azuma-Hoeffding inequality to get that, for any $x > 0$,
\begin{equation}\label{eqn:AH_Nm}
\forall \ell\in\{0,1\}\quad \P_\ell\left(\left|N_m(n) - \E_\ell[N_m(n)]\right| \geq x\right) \leq 2 \mspace{1mu} \e^{-\frac{x^2}{8 m^2 n}}\ .
\end{equation}
This completes the characterization of the term $B$ in \eqref{eqn:decomposition_T}. We now proceed by considering the type-I and type-II errors separately.

\subsection{\normalfont\emph{Type-I error}}
To control the term $A$ in \eqref{eqn:decomposition_T} under the null model we use \cite[Proposition~2.2]{Deijfen2007} (see also \cite[Proposition~8.7]{VanderHofstad2017}), which states that there exists a constant $C_0 = C_0(\delta_0, m)$ such that, for all $n \geq 1$,
\begin{equation}\label{eqn:bounded_difference}
|\E_0[N_m(n)] - n \mspace{1mu} p_m(\delta_0)| \leq C_0\ .
\end{equation}
Combining this with \eqref{eqn:AH_Nm} we see that the type-I error of the minimal degree test is bounded by
\begin{align}
\P_0(\psi(T(G_n)) \neq 0)
  &= \P_0\left(|N_m(n) - n \mspace{1mu} p_m(\delta_0)| \geq m \sqrt{8 n \log(2 / \alpha)}\right)\\
  &\leq \P_0\left(|N_m(n) - \E_0[N_m(n)]| \geq m \sqrt{8 n \log(2 / \alpha)} - C_0\right)\\
  &\leq 2 \exp\left(- \frac{(m \sqrt{8 n \log(2 / \alpha)} - C_0)^2}{8 m^2 n}\right)
  = (1 + \smallO(1)) \alpha \ .
\end{align}
This shows that the type-I error is essentially at most $\alpha$, completing the first part of the proof.

\subsection{\normalfont\emph{Type-II error}}

Again we must control the term $A$ above, for which the following proposition is instrumental:
\begin{proposition}\label{prp:A} Let $0<\gamma<1$. Then
\begin{align}
\E_1[N_m(n)]-np_m(\delta_0)&= (1+\smallO(1))\ cn^\gamma \left(1 - \frac{p_m(\delta_0)}{p_m(\delta_1)}\right)\\
 &= (1+\smallO(1)) cn^\gamma (\delta_0-\delta_1)\frac{1}{(2+\delta_1/m)(m+\delta_0+2+\delta_0/m)}\ .
\end{align}
\end{proposition}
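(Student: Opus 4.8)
The plan is to split $N_m(n)$ according to whether a vertex arrived before or after the changepoint: write $N_m(n)=N_m^{\le}(n)+N_m^{>}(n)$, where $N_m^{\le}(n)$ counts vertices among $\{v_0,\dots,v_{\tau_n}\}$ with degree $m$ in $G_n$, and $N_m^{>}(n)$ counts vertices among $\{v_{\tau_n+1},\dots,v_n\}$ with degree $m$ in $G_n$. Since degrees are nondecreasing in time and a freshly arrived vertex has degree exactly $m$, a vertex among $\{v_0,\dots,v_{\tau_n}\}$ has degree $m$ in $G_n$ if and only if it already has degree $m$ in $G_{\tau_n}$ and is never selected as an attachment target at any substep $(t,i)$ with $\tau_n<t\le n$ and $i\in[m]$. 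The key structural fact, visible in \eqref{eq:attachment_function_alt}, is that the normalizing constant $S_{t,i-1}(\delta_1)=t\delta_1+2m(t-1)+(i-1)$ is \emph{deterministic}. Hence, conditionally on a fixed vertex having degree $m$ at all earlier substeps, the probability it is missed at substep $(t,i)$ equals $1-(m+\delta_1)/S_{t,i-1}(\delta_1)$, independently of the rest of $G_{t,i-1}$. Telescoping over all substeps after $\tau_n$ shows that a given degree-$m$ vertex of $G_{\tau_n}$ still has degree $m$ in $G_n$ with the \emph{deterministic} probability $1-q_n$, where $q_n:=1-\prod_{t=\tau_n+1}^{n}\prod_{i=1}^{m}\bigl(1-(m+\delta_1)/S_{t,i-1}(\delta_1)\bigr)$. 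Consequently $\E_1[N_m^{\le}(n)]=(1-q_n)\,\E_1[N_m(\tau_n)]$, and because the alternative model agrees with the null model up to time $\tau_n$, the bound \eqref{eqn:bounded_difference} gives $\E_1[N_m(\tau_n)]=\tau_n\,p_m(\delta_0)+\bigO(1)$.

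The next step is to evaluate $q_n$ asymptotically. With $c_\delta:=m(m+\delta)/(2m+\delta)=(m+\delta)/(2+\delta/m)$ and using $S_{t,i-1}(\delta_1)=(2m+\delta_1)t+\bigO(1)$ uniformly in $i\in[m]$, a logarithm-plus-Taylor expansion yields $\log(1-q_n)=-c_{\delta_1}\sum_{t=\tau_n+1}^{n}t^{-1}+\bigO\bigl(\sum_{t>\tau_n}t^{-2}\bigr)$. Since $\tau_n=n-cn^\gamma$ with $\gamma<1$, one has $\sum_{t=\tau_n+1}^{n}t^{-1}=\log(n/\tau_n)+\bigO(1/n)=cn^{\gamma-1}(1+\smallO(1))$ and the remainder is $\bigO(n^{\gamma-2})=\smallO(n^{\gamma-1})$, so $q_n=c_{\delta_1}\,c\,n^{\gamma-1}(1+\smallO(1))\to0$. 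Combining with the previous step and $\tau_n=n(1+\smallO(1))$ (so $q_n\tau_n=c_{\delta_1}cn^\gamma(1+\smallO(1))$), we get $\E_1[N_m^{\le}(n)]=np_m(\delta_0)-cn^\gamma p_m(\delta_0)-c_{\delta_1}cn^\gamma p_m(\delta_0)(1+\smallO(1))+\bigO(1)$.

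For the post-changepoint vertices the same telescoping argument gives that $v_{t'}$ with $t'>\tau_n$ has degree $>m$ in $G_n$ with the deterministic probability $r_{t'}:=1-\prod_{t=t'+1}^{n}\prod_{i=1}^{m}\bigl(1-(m+\delta_1)/S_{t,i-1}(\delta_1)\bigr)$, and a Weierstrass-type bound together with $t'\ge\tau_n$ gives $r_{t'}\le C(n-t')/n$ for some constant $C$ and $n$ large. Hence $\E_1[N_m^{>}(n)]=\sum_{t'=\tau_n+1}^{n}(1-r_{t'})=(n-\tau_n)-\bigO\bigl(n^{-1}(n-\tau_n)^2\bigr)=cn^\gamma-\bigO(n^{2\gamma-1})=cn^\gamma(1+\smallO(1))$, once more using $\gamma<1$. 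Adding the two pieces,
\[
\E_1[N_m(n)]-np_m(\delta_0)=cn^\gamma\bigl(1-p_m(\delta_0)(1+c_{\delta_1})\bigr)(1+\smallO(1))+\bigO(1),
\]
and the identity $1+c_{\delta_1}=1/p_m(\delta_1)$, immediate from \eqref{eq:limiting_degree_distribution}, turns this into the first displayed equality of the proposition; the second equality follows from the elementary simplification $1-p_m(\delta_0)/p_m(\delta_1)=(\delta_0-\delta_1)\big/\bigl((2+\delta_1/m)(m+\delta_0+2+\delta_0/m)\bigr)$, again via \eqref{eq:limiting_degree_distribution}.

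I expect the main obstacle to be bookkeeping rather than conceptual: one must extract the precise leading term $c_{\delta_1}cn^{\gamma-1}$ of $q_n$ while certifying that every correction — the $\bigO(1/t)$ inside the harmonic sum, the $\bigO(t^{-2})$ from the logarithm, the $\bigO(1)$ from \eqref{eqn:bounded_difference}, and the integer rounding of $\tau_n$ — is $\smallO(n^\gamma)$ after multiplication by $\E_1[N_m(\tau_n)]\asymp n$; this is exactly where $\gamma<1$ is essential. An equivalent route derives the recursion $\E_1[N_m(t+1)]=\E_1[N_m(t)]\bigl(1-c_{\delta(t+1)}/t\bigr)+1+\bigO(1/t)$ from \eqref{eq:attachment_function_alt} and solves it on $\{\tau_n,\dots,n\}$, where the forcing constant equals $1-p_m(\delta_0)/p_m(\delta_1)$ and the product $\prod_{s=\tau_n}^{n-1}(1-c_{\delta_1}/s)=1+\smallO(1)$ plays the role of $q_n$ being negligible.
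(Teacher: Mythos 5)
Your argument is correct and follows essentially the same route as the paper: the deterministic product $1-q_n=\prod_{t>\tau_n}\prod_{i=1}^m\bigl(1-(m+\delta_1)/S_{t,i-1}(\delta_1)\bigr)$ is exactly the computation in the paper's Lemma~\ref{lem:crucial} (with $k=m$, $\ell=1$), the new vertices are handled identically, and the identity $1+c_{\delta_1}=1/p_m(\delta_1)$ reproduces the stated leading constant. The only (cosmetic) difference is the anchoring: you apply \eqref{eqn:bounded_difference} at time $\tau_n$ to write $\E_1[N_m(\tau_n)]=\tau_n p_m(\delta_0)+\bigO(1)$, whereas the paper subtracts the corresponding null-model increment and applies \eqref{eqn:bounded_difference} at time $n$; both are valid and yield the same asymptotics.
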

\begin{proof} Note that
\begin{align}\label{eqn:decomposition_term_A}
\E_1[N_m(n)]-np_m(\delta_0) &= \E_0[N_m(n)]-np_m(\delta_0)\\
&\hspace{-40pt}+\E_1[N_m(n)]-\E_1[N_m(\tau_n)] - (\E_0[N_m(n)]-\E_0[N_m(\tau_n)])\ .
\end{align}
The equality holds as the law of $G_{\tau_n}$ is the same under the null and alternative models, and therefore $\E_0[N_m(\tau_n)]=\E_1[N_m(\tau_n)]$. The last two terms are controlled in a similar way.

Let $\ell\in\{0,1\}$ and note that
\begin{align}
\E_\ell[N_m(n)]-\E_\ell[N_m(\tau_n)] &=\sum_{v\in[\tau_n]} (\P_\ell(D_v(n)=m)-\P_\ell(D_v(\tau_n)=m))\\
&\qquad + \sum_{v\in[n]\setminus[\tau_n]} \P_\ell(D_v(n)=m)\label{eqn:new_vertices}\ .
\end{align}
In the above there is a contribution from vertices that arrived before the change-point (so-called ``old'' vertices), and vertices that arrived afterwards (the ``new'' vertices). The contribution by the new vertices $v\in[n]\setminus[\tau_n]$ is essentially the same regardless of the value of $\ell$, as $\P_\ell(D_v(n)=m)\approx 1$ when $v\in[n]\setminus[\tau_n]$. To see this, note that
$\P_\ell(D_v(n)=m)=1-\P_\ell(D_v(n)>m)$ and that the event $D_v(n)>m$ can only occur if there is at least one vertex $v'>v$ attaching to $v$. Referring to \eqref{eq:attachment_function_alt} the probability of this happening is at most $(m+\delta_\ell)/((2m+\delta_{\ell})\tau_n-2m)$, and there are at most $m(n-\tau_n)$ possible edges that can lead to that connection. Therefore
    $$
    \P_\ell(D_v(n)=m)\geq 1-m(n-\tau_n)\frac{m+\delta_\ell}{(2m+\delta_\ell)\tau_n-2m}=1-\bigO(n^{\gamma-1})\ .
    $$
In conclusion (since $\P_\ell(D_v(n)=m)$ is bounded above by 1)
    $$
    \sum_{v\in[n]\setminus[\tau_n]} \P_\ell(D_v(n)=m)=c(n-\tau_n)(1+\bigO(n^{\gamma-1}))\ ,\qquad \ell\in\{0,1\}\ .
    $$

For the term involving the ``old'' vertices we use the following lemma, which will be used to the full extent for the proof of Theorem~\ref{thm:main2}:
\begin{lemma}\label{lem:crucial} Let $v\in[\tau_n]$, $\gamma<1$ and $m\leq k = \smallO(n^{1-\gamma})$ and $\ell\in\{0,1\}$. Then
$$\P_\ell(D_v(n)-D_v(\tau_n)>0\mid D_v(\tau_n)=k)=(1+\smallO(1))cn^{\gamma-1} m \frac{k+\delta_\ell}{2m+\delta_\ell}\ .$$ as $n\to\infty$.
\end{lemma}
\begin{proof}
Note that
\begin{align}
\lefteqn{\P_\ell\left(D_v(n)-D_v(\tau_n)>0\mid  D_v(\tau_n)=k\right)}\\
&= 1 - \P_\ell\left(D_v(n)-D_v(\tau_n)=0\mid D_v(\tau_n)=k\right)\\
&=1-\prod_{j\in[n]\setminus[\tau_n]} \prod_{i=1}^m \left(1-\frac{k+\delta_\ell}{j(2m+\delta_\ell)-2m+i-1}\right)\\
&=1-\exp\left(\sum_{j\in[n]\setminus[\tau_n]} \sum_{i=1}^m \log\left(1-\frac{k+\delta_\ell}{j(2m+\delta_\ell)-2m+i-1}\right)\right)\\
&=1-\exp\left(\sum_{j\in[n]\setminus[\tau_n]} \sum_{i=1}^m -(1+\smallO(1))\frac{k+\delta_\ell}{j(2m+\delta_\ell)-2m+i-1}\right)\label{eqn:step1}\\
&=1-\exp\left(-(1+\smallO(1))cmn^\gamma \frac{k+\delta_\ell}{n(2m+\delta_\ell)}\right)\\
&=1-\left(1-(1+\smallO(1))cmn^{\gamma-1} \frac{k+\delta_\ell}{2m+\delta_\ell}\right)\label{eqn:step2}\\
&=(1+\smallO(1))cmn^{\gamma-1} \frac{k+\delta_\ell}{2m+\delta_\ell}\ ,
\end{align}
where in \eqref{eqn:step1} we relied on the fact that $k=\smallO(n)$ and for \eqref{eqn:step2} it is crucial that $k=\smallO(n^{1-\gamma})$.
\qed

With this lemma in hand, we clearly see that
\begin{align}
\lefteqn{\P_\ell(D_v(n)=m)-\P_\ell(D_v(\tau_n)=m)}\\
&= \P_\ell(D_v(n)-D_v(\tau_n)=0|D_v(\tau_n)=m)\P_\ell(D_v(\tau_n)=m)\\
&= \left(1-\P_\ell(D_v(n)-D_v(\tau_n)>0|D_v(\tau_n)=m)\right)\P_\ell(D_v(\tau_n)=m)\\
&= \left(1-(1+\smallO(1))cn^{\gamma-1}m\frac{m+\delta_\ell}{2m+\delta_\ell}\right)\P_\ell(D_v(\tau_n)=m)\ .
\end{align}
Putting the two results together we get
\begin{align}
\lefteqn{\E_\ell[N_m(n)]-\E_\ell[N_m(\tau_n)]}\\
&= \left(1-(1+\smallO(1))cn^{\gamma-1}m\frac{m+\delta_\ell}{2m+\delta_\ell}\right)\E_\ell[N_m(\tau_n)]+c(n-\tau_n)(1+\bigO(n^{\gamma-1}))\ .
\end{align}
Note that this results characterizes what happens both under the null and alternative models. Hence this result, together with \eqref{eqn:decomposition_term_A} and the fact that $\E_0[N_m(\tau_n)]=\E_1[N_m(\tau_n)]$, yields
\begin{align}
\lefteqn{\E_1[N_m(n)]-np_m(\delta_0)}\\
&= \E_0[N_m(n)]-np_m(\delta_0)\\
&\qquad +(1+\smallO(1))cn^{\gamma-1}m\left(\frac{m+\delta_1}{2m+\delta_1}-\frac{m+\delta_0}{2m+\delta_0}\right)\E_0[N_m(n)]+\bigO(n^{2\gamma-1})\\
&=\bigO(1)+(1+\smallO(1))cn^{\gamma-1}(\delta_1-\delta_0)\frac{1}{(2+\delta_1/m)(2+\delta_0/m)}n\left(p_m(\delta_0)+\bigO(1/n)\right)\\
&\qquad+\bigO(n^{2\gamma-1})\\
&=(1+\smallO(1))cn^{\gamma}\frac{\delta_1-\delta_0}{(2+\delta_1/m)(m+\delta_0+2+\delta_0/m)}\ ,
\end{align}
where $0<\gamma<1$, and we have again used \eqref{eqn:bounded_difference} to relate $\E_0[N_m(n)]$ to $np_m(\delta_0)$.
\end{proof}

Similarly as for the type-I error, we can use Proposition~\ref{prp:A} together with the Azuma-Hoeffding inequality \eqref{eqn:AH_Nm} to get
\begin{align}
\lefteqn{\P_1(T(G_n) \neq 1)}\\
&= \P_1\left(|N_m(n) - n p_m(\delta_0)| < m \sqrt{8 n \log(2 / \alpha)}\right)\\
&\leq \P_1\left(|N_m(n) - \E_1[N_m(n)]| > \left(|\E_1[N_m(n)]-n p_m(\delta_0)| - m \sqrt{8 n \log(2 / \alpha)}\right)\vee 0\right)\\
&\leq 2\exp\left(-\frac{\left(\left(|\E_1[N_m(n)]-n p_m(\delta_0)| - m \sqrt{8 n \log(2 / \alpha)}\right)\vee 0\right)^2}{8m^2n}\right)\ .
\end{align}
Considering the cases $\gamma > \tfrac{1}{2}$ and $\gamma = \tfrac{1}{2}$ separately, this gives
\begin{align}
\lefteqn{\P_1(T(G_n) \neq 1)}\\
  &\leq
  \begin{cases}
    \smallO(1) & \text{when } \gamma > \tfrac{1}{2},\\
    (2 + \smallO(1)) \mspace{2mu} \exp\left(-\Bigl(\Bigl(\frac{c \mspace{2mu} |1 - p_m(\delta_0) / p_m(\delta_1)|}{m \sqrt{8}} - \smash{\sqrt{\log(2 / \alpha)}}\Bigr) \vee 0\Bigr)^{\!2}\right) & \text{when } \gamma = \tfrac{1}{2}.
  \end{cases}\notag
\end{align}

The case $\gamma<\tfrac{1}{2}$ does not follow immediately from the analysis above, as the characterization obtained by the Azuma-Hoeffding only provides an upper bound on the variability of the test statistic. However, in Theorem~\ref{thm:asymptotic_normality_test_statistics} it is shown that $(T(G_n)-\E_1[T(G_n)])/\sqrt{n}$ is asymptotically normal with mean 0 and variance $w(\delta_0,m)>0$. As shown above $\E_1[T(G_n)]/\sqrt{n}=\bigO(n^{\gamma-\tfrac{1}{2}})$. Therefore $\E_1[T(G_n)]/\sqrt{n}=\smallO(1)$ when $\gamma<\tfrac{1}{2}$, and therefore $T(G_n)/\sqrt{n}$ converges to the same distribution under the null and alternative models, meaning the type II error is asymptotically just the complement of the type I error.
\end{proof}

\section{Powerful test for unknown \texorpdfstring{$\delta_0$}{δ₀}: Proof of Theorem~\ref{thm:main2}}\label{sec:proof_unknown}

To prove Theorem~\ref{thm:main2} we partially leverage on the results and analysis in Theorem~\ref{thm:minimal_degree_test_known_delta}. However, we must take into account that $\delta_0$ is not known, and rather we have only its estimate $\hat\delta_n$. The main idea is to decompose the test statistic $Q(G_n)$ as
\begin{align}\label{eqn:three_terms}
Q(G_n) &= N_m(n) - np_m(\hat \delta_n)\\
&=\underbrace{\E_\ell[N_m(n)]-np_m(\delta_0)}_{:=A}+\underbrace{np_m(\delta_0)-np_m(\tilde \delta_n)}_{:=B}\\
&\qquad + \underbrace{N_m(n)-\E_\ell[N_m(n)]+np_m(\tilde \delta_n)-np_m(\hat \delta_n)}_{:=C}\ ,
\end{align}
where $\ell\in\{0,1\}$. In the above $\tilde \delta_n$ is a deterministic quantity, and it is formally defined below. At this moment one might simply think of it as a ``population'' version of $\hat\delta_n$.

Clearly $A$ is already characterized by Proposition~\ref{prp:A}. The bulk of the argument needed to prove Theorem~\ref{thm:minimal_degree_test_known_delta} is in the characterization of $B$, the second term. For this we need to understand how fast $\tilde \delta_n$ converges to $\delta_0$ as $n\to\infty$. It turns out that under the alternative model both $A$ and $B$ have the same first-order asymptotic behavior, but with different leading constants. This fact is crucial to ensure that the test is powerful. Finally $C$, the last term, appears complicated but it is very well concentrated around zero.

Define $\tilde\delta_n$ as a solution of $\E_\ell[\iota'_n(\delta)]=0$ in $\delta\in[\delta_{\min},\delta_{\max}]$ (if more than one solution exists, then we choose an arbitrary one). Most of the analysis focuses on the alternative model, but the stated results apply to the null model simply by taking $\delta_1=\delta_0$. The following proposition gives a characterization of $\tilde\delta_n$ and the term $B$ above:

\begin{proposition}\label{prp:B} Let $\tfrac{1}{2}< \gamma<1$. Under the alternative model $\tilde \delta_n$ converges to $\delta_0$ as $n\to\infty$. Furthermore,
\begin{align}\label{eqn:convergence_tilde_delta}
\tilde\delta_n-\delta_0&=(1+\smallO(1))n^{\gamma-1} c(\delta_1-\delta_0)\frac{2m+\delta_0}{2m+\delta_1}\ ,
\end{align}
and
\begin{equation}
n(p_m(\delta_0)-p_m(\tilde\delta_n)) = (1+\smallO(1)) cn^{\gamma} \frac{1}{(m+\delta_0+2+\delta_0/m)^2}(\delta_1-\delta_0)\frac{2m+\delta_0}{2m+\delta_1}\ .
\end{equation}
\end{proposition}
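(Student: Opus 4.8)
The plan is to study the deterministic population score $F^{(\ell)}_n(\delta):=\E_\ell[\iota'_n(\delta)]$, whose zero is $\tilde\delta_n$, and to treat the alternative as a perturbation of the null: $F^{(1)}_n(\delta)=F^{(0)}_n(\delta)+R_n(\delta)$ with $R_n(\delta):=\tfrac{1}{n+1}\sum_{k\ge m}\tfrac{\E_1[N_{>k}(n)]-\E_0[N_{>k}(n)]}{k+\delta}$. The pivotal fact is that $F^{(0)}_n(\delta_0)=0$ for every $n$: the process $Y_n:=(n+1)\iota'_n(\delta_0)$ is a Doob martingale with $Y_1=0$, because at sub-step $(t,i)$ the count $N_{>k}$ increases by one exactly when the new edge lands on an existing vertex of degree $k$, with conditional probability $(k+\delta_0)N_k(G_{t,i-1})/S_{t,i-1}(\delta_0)$, so $\sum_{k\ge m}(k+\delta_0)^{-1}$ times these expected increments telescopes to $\sum_{k\ge m}N_k(G_{t,i-1})/S_{t,i-1}(\delta_0)=t/S_{t,i-1}(\delta_0)$, cancelling the deterministic term in $\iota'_n$. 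Combined with the crude uniform bound $\sup_{\delta\in[\delta_{\min},\delta_{\max}]}|R_n(\delta)|=\bigO(n^{\gamma-1}\log n)$ — obtained from $|\E_1[N_{>k}(n)]-\E_0[N_{>k}(n)]|\le\min\{\bigO(n^\gamma),\bigO(nk^{-(2+\delta_0/m)})\}$ and splitting the $k$-sum at the crossover scale — and with the Gao--van der Vaart analysis of $\iota_n$ \cite{Gao2017} (which gives uniform convergence of $F^{(0)}_n$ on $[\delta_{\min},\delta_{\max}]$ to a limit vanishing only at $\delta_0$, transversally), one gets $F^{(1)}_n\to$ that same limit uniformly, so any zero $\tilde\delta_n$ converges to $\delta_0$.

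For the rate, I would Taylor-expand $F^{(0)}_n$ at $\delta_0$. Using $F^{(0)}_n(\delta_0)=0$, uniform boundedness of $\E_0[\iota''_n]$ and $\E_0[\iota'''_n]$ on the parameter interval, and $\tilde\delta_n\to\delta_0$, the relation $0=F^{(1)}_n(\tilde\delta_n)$ first gives $\tilde\delta_n-\delta_0=\bigO(n^{\gamma-1})$ (from the uniform bound on $R_n$), and then, bootstrapping, $(\tilde\delta_n-\delta_0)\,\E_0[\iota''_n(\delta_0)](1+\smallO(1))=-R_n(\tilde\delta_n)=-R_n(\delta_0)(1+\smallO(1))$, since $R_n$ is Lipschitz in $\delta$ with constant $\bigO(n^{\gamma-1})$ while $|R_n(\delta_0)|$ is itself of order $n^{\gamma-1}$ (as $\delta_1\ne\delta_0$). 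A computation identifies the limit: with $p_{>k}(\delta_0):=\sum_{j>k}p_j(\delta_0)$, the $S_{t,i-1}$-sum in $\iota''_n$ converges to $m/(2m+\delta_0)^2$, while $\tfrac{1}{n+1}\sum_k\E_0[N_{>k}(n)](k+\delta_0)^{-2}\to\sum_{k\ge m}p_{>k}(\delta_0)(k+\delta_0)^{-2}=\tfrac{m}{2m+\delta_0}\sum_k p_k(\delta_0)(k+\delta_0)^{-1}$, where the last equality uses the stationarity identity $(k+\delta_0)p_k(\delta_0)=\tfrac{2m+\delta_0}{m}p_{>k}(\delta_0)$ (obtained by telescoping \eqref{eqn:recursion_p_k} and using $(k+\delta_0)p_k(\delta_0)\to0$). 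Subtracting gives $\E_0[\iota''_n(\delta_0)]\to-v(\delta_0,m)$, the limiting Fisher information from \eqref{v-var-def}, so that $\tilde\delta_n-\delta_0=R_n(\delta_0)/v(\delta_0,m)\cdot(1+\smallO(1))$.

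It then remains to evaluate $R_n(\delta_0)$, which I would do by a vertex-by-vertex analysis analogous to Proposition~\ref{prp:A}. Write $R_n(\delta_0)=\tfrac{1}{n+1}\sum_{v\in[n]}\big(\E_1[\Psi(D_v(n))]-\E_0[\Psi(D_v(n))]\big)$ with $\Psi(d):=\sum_{j=m}^{d-1}(j+\delta_0)^{-1}$. New vertices, and old vertices with $D_v(\tau_n)>n^\epsilon$, contribute $\smallO(n^{\gamma-1})$ (using $\P_\ell(D_v(n)>D_v(\tau_n))=\bigO(n^{\gamma-1})$ for new $v$, and $\sum_{v\le\tau_n}(D_v(n)-D_v(\tau_n))\le2m(n-\tau_n)$ for the high-degree tail). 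For an old vertex with $D_v(\tau_n)=k\le n^\epsilon$, Lemma~\ref{lem:crucial} (plus a $\bigO(k^2n^{2(\gamma-1)})$ bound on gaining $\ge2$ edges) gives $\E_\ell[\Psi(D_v(n))-\Psi(k)\mid D_v(\tau_n)=k]=(1+\smallO(1))cn^{\gamma-1}\tfrac{m}{k+\delta_0}\cdot\tfrac{k+\delta_\ell}{2m+\delta_\ell}$; since $G_{\tau_n}$ has the same law under both hypotheses, the $\ell=1$ minus $\ell=0$ difference equals $(1+\smallO(1))cn^{\gamma-1}g(k)$ with $g(k)=\tfrac{m}{k+\delta_0}\big(\tfrac{k+\delta_1}{2m+\delta_1}-\tfrac{k+\delta_0}{2m+\delta_0}\big)=\tfrac{m(\delta_1-\delta_0)}{2m+\delta_1}\cdot\tfrac{1}{k+\delta_0}-\tfrac{m(\delta_1-\delta_0)}{(2m+\delta_0)(2m+\delta_1)}$. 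Summing over the old vertices using $\E_0[N_k(\tau_n)]=(1+\smallO(1))\tau_n p_k(\delta_0)$ (a truncation justified by $|\E_0[N_k(\tau_n)]-\tau_n p_k(\delta_0)|=\bigO(1)$, \cite{Deijfen2007,VanderHofstad2017}), together with $\sum_k p_k(\delta_0)(k+\delta_0)^{-1}-(2m+\delta_0)^{-1}=\tfrac{2m+\delta_0}{m}v(\delta_0,m)$, one finds $R_n(\delta_0)=(1+\smallO(1))cn^{\gamma-1}\tfrac{(\delta_1-\delta_0)(2m+\delta_0)}{2m+\delta_1}v(\delta_0,m)$, which divided by $v(\delta_0,m)$ is exactly \eqref{eqn:convergence_tilde_delta}. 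The second display then follows from $n\big(p_m(\delta_0)-p_m(\tilde\delta_n)\big)=-n\,p_m'(\delta_0)(\tilde\delta_n-\delta_0)+\bigO(n^{2\gamma-1})$, with $p_m'(\delta_0)=-(m+\delta_0+2+\delta_0/m)^{-2}$ and $2\gamma-1<\gamma$.

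The main obstacle is the uniform bookkeeping of the $\smallO(n^{\gamma-1})$ remainders in the evaluation of $R_n(\delta_0)$: controlling high-degree vertices (where Lemma~\ref{lem:crucial} fails and a vertex may gain many edges), controlling vertices that gain two or more edges, and making the degree-count truncation compatible with keeping only a multiplicative $(1+\smallO(1))$ error — the splitting $g(k)=\text{const}\cdot(k+\delta_0)^{-1}-\text{const}$, which isolates the summable piece from the piece whose partial sums grow linearly in $k$, is what makes this last point work. Everything else is routine, and the martingale identity $F^{(0)}_n(\delta_0)=0$ is the structural fact that keeps the first-order expansion clean (indeed only $F^{(0)}_n(\delta_0)=\smallO(n^{\gamma-1})$ is needed, but the identity delivers this for free).
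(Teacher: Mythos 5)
Your proposal is correct and follows essentially the same route as the paper: consistency of $\tilde\delta_n$ via uniform convergence of the population score, then a first-order expansion of $\E_0[\iota'_n]$ at $\delta_0$ (using $\E_0[\iota'_n(\delta_0)]=0$ and $\E_0[\iota''_n]\to\iota''(\delta_0)=-v(\delta_0,m)$) balanced against the perturbation $\E_1[\iota'_n]-\E_0[\iota'_n]$, and finally a Taylor expansion of $p_m$. Your inline evaluation of $R_n(\delta_0)$ — old/new vertex split, Lemma~\ref{lem:crucial} for single-edge gains, separate control of multi-edge gains and high degrees, and the splitting of $g(k)$ into a summable piece plus a constant — is exactly the content of the paper's Proposition~\ref{prp:ana-diff-means} (proved in the appendix) specialized to $\delta=\delta_0$, with your Lipschitz bound on $R_n$ substituting for the paper's uniformity in $\delta$.
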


The proof of this result is rather involved, due to the implicit nature of the definition of $\tilde\delta_n$. Note, however, that $\iota'_n(\delta)$ is a score function, therefore we have immediately that $\E_0[\iota'_n(\delta_0)]=0$. Under the alternative model $\tilde\delta_n$ will not be equal to $\delta_0$ and quantifying this deviation is crucial to our analysis. The following technical result, examining the difference between $\E_1[\iota'_n(\delta)]$ and $\E_0[\iota'_n(\delta)]$, is instrumental:

\begin{proposition}[Analysis of differences of means]
\label{prp:ana-diff-means}
Let $\tfrac{1}{2}<\gamma<1$. As $n\rightarrow \infty$, and uniformly for every $\delta\in[\delta_{\min},\delta_{\max}]$,
\begin{equation}
(n+1)\left(\E_1[\iota'_n(\delta)]-\E_0[\iota'_n(\delta)]\right) = \kappa(\delta_1,\delta_0,\delta) n^\gamma (1+\smallO(1))\ ,
\end{equation}
where $\kappa(\delta_1,\delta_0,\delta)$ equals
\begin{equation}\label{kappa-delta0-delta1-def}
\kappa \coloneqq \kappa(\delta_1,\delta_0,\delta)=(\delta_1-\delta_0)\frac{cm}{(2m  + \delta_1)(2m  + \delta_0)}\left(-1+\sum_{k\geq m}\frac{2m+\delta}{k+\delta}p_k(\delta_0)\right)\ .
\end{equation}
\end{proposition}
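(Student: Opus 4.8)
The plan is to exploit the explicit form of the score function in \eqref{eqn:iota_n_prime}. Multiplying by $n+1$, we have $(n+1)\iota_n'(\delta)=\sum_{k\ge m}\tfrac1{k+\delta}N_{>k}(n)-\sum_{t=2}^n\sum_{i=1}^m\tfrac{t}{S_{t,i-1}(\delta)}$, and the doubly indexed sum is a deterministic function of $\delta,m,n$ only, so it cancels when we subtract the expectation under $H_0$ from the one under $H_1$:
\begin{equation}
(n+1)\bigl(\E_1[\iota_n'(\delta)]-\E_0[\iota_n'(\delta)]\bigr)=\sum_{k=m}^\infty\frac{1}{k+\delta}\bigl(\E_1[N_{>k}(n)]-\E_0[N_{>k}(n)]\bigr).
\end{equation}
Since $k+\delta\ge m+\delta_{\min}>0$ for all $k\ge m$ and $\delta\in[\delta_{\min},\delta_{\max}]$, the weights $1/(k+\delta)$ are bounded uniformly in $\delta$, so the uniformity claimed in the proposition will follow as soon as the $k$-summed asymptotics of $\E_1[N_{>k}(n)]-\E_0[N_{>k}(n)]$ are obtained with constants not depending on $\delta$. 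Exactly as in the proof of Proposition~\ref{prp:A}, I would use that $G_{\tau_n}$ has the same law under $H_0$ and $H_1$, so that $\E_0[N_{>k}(\tau_n)]=\E_1[N_{>k}(\tau_n)]$, and write $\E_1[N_{>k}(n)]-\E_0[N_{>k}(n)]=\bigl(\E_1[N_{>k}(n)]-\E_1[N_{>k}(\tau_n)]\bigr)-\bigl(\E_0[N_{>k}(n)]-\E_0[N_{>k}(\tau_n)]\bigr)$, reducing matters to the net change of $N_{>k}$ over the last $cn^\gamma$ steps under each model.

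Two elementary deterministic facts about the upper-tail counts organize the argument. First, for every $k\ge m$ the process $t\mapsto N_{>k}(t)$ is non-decreasing, since a newly arrived vertex has degree exactly $m$ and existing degrees never decrease; hence each increment $\E_\ell[N_{>k}(n)]-\E_\ell[N_{>k}(\tau_n)]$ is non-negative. Second, $\sum_{k\ge m}N_{>k}(t)=\sum_{v\in[t]}(D_v(t)-m)=2mt-m(t+1)=m(t-1)$ deterministically, so that $\sum_{k\ge m}\bigl(\E_\ell[N_{>k}(n)]-\E_\ell[N_{>k}(\tau_n)]\bigr)=m(n-\tau_n)=cmn^\gamma$. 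This bounded total mass settles the tail of the $k$-sum: for any $K_n\to\infty$,
\begin{equation}
\sum_{k>K_n}\frac{1}{k+\delta}\bigl|\E_1[N_{>k}(n)]-\E_0[N_{>k}(n)]\bigr|\le\frac{1}{K_n+\delta_{\min}}\bigl(cmn^\gamma+cmn^\gamma\bigr)=\smallO(n^\gamma),
\end{equation}
uniformly in $\delta$.

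For the body $m\le k\le K_n$, with $K_n\to\infty$ diverging slowly (for definiteness $K_n=\log n$, so that $K_n=\smallO(n^{1-\gamma})$ and $K_n$ grows slower than any power of $n$), I would split $\E_\ell[N_{>k}(n)]-\E_\ell[N_{>k}(\tau_n)]$ according to the degree $j=D_v(\tau_n)$ of each old vertex $v\in[\tau_n]$ at the changepoint, plus the contribution of the vertices born after $\tau_n$. The born-after-$\tau_n$ vertices contribute $\bigO(n^{2\gamma-1})$ for each $k$ (exactly as in Proposition~\ref{prp:A}), and an old vertex with $D_v(\tau_n)=j<k$ must gain at least two further edges, an event of conditional probability $\bigO(j^2 n^{2(\gamma-1)})$ by a union bound over pairs (first hit, second hit) of attachment slots among the last $cn^\gamma$ steps — using that the degree of $v$ just before the second hit is at most $j+m$; after weighting by $1/(k+\delta)$ and summing over $k\le K_n$ both contributions are $\smallO(n^\gamma)$, thanks to the slow growth of $K_n$. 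The leading contribution is from old vertices with $D_v(\tau_n)=k$ that gain at least one more edge: by Lemma~\ref{lem:crucial} (whose conclusion is uniform over $v\in[\tau_n]$, since the product in its proof does not involve $v$, and, for $k=\smallO(n^{1-\gamma})$, uniform over $k$) this equals $cmn^{\gamma-1}\tfrac{k+\delta_\ell}{2m+\delta_\ell}\E_\ell[N_k(\tau_n)]$ up to an error that sums (against $1/(k+\delta)$, over $k\le K_n$) to $\smallO(n^\gamma)$, and $\E_\ell[N_k(\tau_n)]=\E_0[N_k(\tau_n)]=(1+\smallO(1))\,n\,p_k(\delta_0)$ uniformly for $k\le K_n$ by the standard concentration $|\E_0[N_k(t)]-t\,p_k(\delta_0)|=\bigO(1)$ under the null model together with the monotonicity of $p_k(\delta_0)$ in $k$. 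Subtracting the $\ell=1$ and $\ell=0$ versions produces the factor $\tfrac{k+\delta_1}{2m+\delta_1}-\tfrac{k+\delta_0}{2m+\delta_0}=\tfrac{(\delta_1-\delta_0)(2m-k)}{(2m+\delta_1)(2m+\delta_0)}$; inserting this into the $k$-sum and using $\tfrac{2m-k}{k+\delta}=\tfrac{2m+\delta}{k+\delta}-1$ together with $\sum_{k\ge m}p_k(\delta_0)=1$ recovers precisely $\kappa(\delta_1,\delta_0,\delta)\,n^\gamma$ as in \eqref{kappa-delta0-delta1-def}. The series $\sum_{k\ge m}\tfrac{(2m-k)p_k(\delta_0)}{k+\delta}$ converges absolutely and uniformly in $\delta$ because $|2m-k|/(k+\delta)$ is bounded for $k\ge m$, so truncating at $K_n$ only adds $\smallO(n^\gamma)$.

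I expect the main obstacle to be the uniform-in-$k$ bookkeeping rather than any single estimate: one must check that the error in Lemma~\ref{lem:crucial} is uniform over the growing range $k\le K_n$, establish the genuinely second-order estimate for the rare events in which an old vertex gains two or more edges after the changepoint, and tune $K_n$ so that the tail bound, the multi-edge bound, the born-after-$\tau_n$ bound, and the degree-count concentration are all simultaneously valid and $\smallO(n^\gamma)$, uniformly in $\delta$. Once this is in place, the algebraic identity collapsing the $k$-sum to the bracket in \eqref{kappa-delta0-delta1-def} is routine. (Incidentally, only $\gamma<1$ is used above; the stronger hypothesis $\gamma>\tfrac12$ is inherited from the section in which this proposition is applied.)
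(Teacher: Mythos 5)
Your proposal is correct, and its skeleton coincides with the paper's: the same reduction to $\sum_{k\ge m}(k+\delta)^{-1}\bigl(\E_1[N_{>k}(n)]-\E_0[N_{>k}(n)]\bigr)$ after the deterministic term cancels, the same splitting at $\tau_n$ using that $G_{\tau_n}$ has the same law under both models, the same trichotomy (post-changepoint vertices; old vertices with $D_v(\tau_n)=k$ gaining one edge; old vertices crossing level $k$ by gaining two or more edges), the same use of Lemma~\ref{lem:crucial} for the leading term, and the same algebra $\tfrac{2m-k}{k+\delta}=\tfrac{2m+\delta}{k+\delta}-1$ to arrive at $\kappa$. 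Where you genuinely diverge is in the two error estimates. For the tail in $k$ you exploit the deterministic identity $\sum_{k\ge m}N_{>k}(t)=m(t-1)$ together with monotonicity of $t\mapsto N_{>k}(t)$, so the total mass of all increments is exactly $cmn^\gamma$ and any $K_n\to\infty$ kills the tail; the paper instead truncates at $b_n=\lceil n^{1-\gamma}/\log n\rceil$ and bounds the tail of the leading term via $\P(\cdot\mid D_v(\tau_n)=k)\le k^2/b_n^2$ and a first-moment computation. Your identity is cleaner and lets you take $K_n=\log n$, which simplifies everything downstream (e.g.\ $\sum_{j\le K_n}j^2p_j(\delta_0)=\bigO(K_n)$ without any moment assumption on the degree distribution). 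For the two-or-more-edges term, you use a union bound over ordered pairs of attachment slots, giving $\bigO(j^2n^{2\gamma-2})$ conditionally on $D_v(\tau_n)=j$; the paper instead introduces the first post-changepoint hitting time $\sigma_v$ and the exact product formula for conditional expected degrees (Lemma~\ref{lem:recursion_expected_degrees}), handling all $k$ at once via $\sum_{k=D_v(\tau_n)+1}^{D_v(n)-1}1=D_v(n)-D_v(\tau_n)-1$. Your route avoids that auxiliary lemma entirely; to make the union bound airtight, sum over the choice of the \emph{second} hitting slot, on which event the degree just before it is exactly $j+1$ (your ``$\le j+m$'' is a harmless overestimate). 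Both routes produce errors of order $n^{2\gamma-1}$ times logarithms, which are $\smallO(n^\gamma)$ using only $\gamma<1$; your closing observation that $\gamma>\tfrac12$ is not actually needed matches the paper's own estimates. The remaining bookkeeping you flag (uniformity of Lemma~\ref{lem:crucial} over $k\le\log n$, replacing $\E_0[N_k(\tau_n)]$ by $np_k(\delta_0)$ with errors summing to $\smallO(n^\gamma)$, and uniformity in $\delta$ via $k+\delta\ge m+\delta_{\min}$) is routine and goes through as you describe.
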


The proof of this result, which is long and rather technical, is given in the appendix. With this result in hand we are ready to prove Proposition~\ref{prp:B}:
\begin{proof}[Proof of Proposition~\ref{prp:B}]
We first establish that $\tilde\delta_n$ is well defined and that $\tilde\delta_n\to\delta_0$. Define
\begin{equation}\label{eqn:limit_iota_prime}
\iota'(\delta)\coloneqq \sum_{k\geq m} \frac{p_{>k}(\delta_0)}{k+\delta}-\frac{1}{2+\delta/m}\ ,
\end{equation}
where $p_{>k}(\delta_0)=\sum_{j>k} p_j(\delta_0)$. Intuitively, this should be the limit of $\iota'_n(\delta)$ as $n\to\infty$, as we see next. Note also that we can rewrite this expression in terms of $p_k$, by noticing (see \cite[Lemma~2]{Gao2017}) that, for $k\geq m$
\begin{equation}
p_{>k}(\delta_0)=\frac{k+\delta_0}{2+\delta_0/m}p_k(\delta_0)\ .
\end{equation}
Therefore,
$$\iota'(\delta)=\frac{1}{2+\delta_0/m}\sum_{k\geq m} \frac{k+\delta_0}{k+\delta}p_k(\delta_0) -\frac{1}{2+\delta/m} \ .$$

\cite[Lemma~6]{Gao2017} shows essentially that, as $n\rightarrow \infty$,
$$\sup_{\delta\in[\delta_{\min},\delta_{\max}]} |\iota'_n(\delta)-\iota'(\delta)| \xrightarrow{\P_0} 0\ .$$
\cite[Lemma~6]{Gao2017} is actually stated for a more general setting, where the number of edges added at each step is \textit{random}. However, if the support of the distribution of the number of edges is bounded below by $m$, then following the steps in the proof of the lemma leads to the above result. Actually, in our setting we can state a slightly stronger result, namely convergence in $L_1$. First note that both $|\iota'_n(\delta)|$ and $|\iota'(\delta)|$ are bounded, uniformly in $\delta$ and $n$. To see this note that, regardless of the value $k$,
\[
kN_{>k}(n)=k\sum_{\ell>k}N_\ell(n)\leq \sum_{\ell>k}\ell N_\ell(n)\leq \sum_{\ell\geq m}\ell N_\ell(n)=2nm\ .
\]
Therefore,
\begin{align}
0 &\leq \frac{1}{n+1}\sum_{k=m}^\infty \frac{1}{k+\delta}N_{>k}(n) \leq \frac{1}{n+1}\sum_{k=m}^\infty \frac{1}{k(k+\delta)}2nm\\
&\leq \frac{2nm}{n+1}\sum_{k=m}^\infty \frac{1}{k(k+\delta_{\min})} \leq m\sum_{k=m}^\infty \frac{1}{k(k+\delta_{\min})}\\
&\leq m\left(\frac{1}{m(m+\delta_{\min})}+\sum_{k=1}^\infty \frac{1}{k^2}\right) = m\left(\frac{1}{m(m+\delta_{\min})}+\pi^2/6\right)\ .
\end{align}
On the other hand,
\begin{align}
0&\leq \frac{1}{n+1}\sum_{t=2}^n \sum_{i=1}^m \frac{t}{t\delta+2m(t-1)+(i-1)} \leq \frac{m}{n+1}\sum_{t=2}^n \frac{t}{t\delta_{\min}+2m(t-1)}\\
&\leq \frac{m(n-1)}{n+1} \frac{2}{2(m+\delta_{\min})} \leq \frac{2m}{2(m+\delta_{\min})}\ .
\end{align}
These two results together imply that $|\iota'_n(\delta)|$ is almost surely uniformly bounded for all $\delta\in[\delta_{\min},\delta_{\max}]$. For $\iota'$ we must simply note that this is a continuous function defined in the compact set $[\delta_{\min},\delta_{\max}]$, and is therefore bounded. In conclusion, as $n\to\infty$,
\begin{align}
\E_0\left[\sup_{\delta\in[\delta_{\min},\delta_{\max}]} |\iota'_n(\delta)-\iota'(\delta)|\right]\to 0\ .
\end{align}
This result, together with Proposition~\ref{prp:ana-diff-means}, shows that this is also true under the alternative model, and therefore, as $n\to\infty$,
\begin{equation}\label{eqn:L1_convergence}
\forall \ell\in\{0,1\}\quad \E_\ell\left[\sup_{\delta\in[\delta_{\min},\delta_{\max}]} |\iota'_n(\delta)-\iota'(\delta)|\right]\to 0\ .
\end{equation}

By definition $\E_1[\iota'_n(\tilde\delta_n)]=0$, therefore we conclude that $\iota'(\tilde\delta_n)\to 0$. \cite[Lemma~4]{Gao2017} shows that $\iota'$ has a unique zero at $\delta_0$, and $\iota'(\delta)>0$ for $\delta<\delta_0$ and $\iota'(\delta)<0$ for $\delta>\delta_0$. This immediately implies that $\tilde\delta_n\to\delta_0$ as $n\to\infty$, proving the first assertion in the proposition.

To quantify the speed of convergence note first that $\E_0[\iota'_n(\delta_0)]=0$, since $\iota'_n$ is a score function. Furthermore, by definition $\E_1[\iota'_n(\tilde\delta_n)]=0$, therefore Proposition~\ref{prp:ana-diff-means} implies that
\begin{align}
0&=\E_1[\iota'_n(\tilde\delta_n)]-\E_0[\iota'_n(\tilde\delta_n)]+\E_0[\iota'_n(\tilde\delta_n)]\\
&=\kappa(\delta_1,\delta_0,\tilde\delta_n) n^{\gamma-1} (1+\smallO(1))+\E_0[\iota'_n(\delta_0)+\iota''_n(\bar\delta_n)(\tilde\delta_n-\delta_0)]\\
&=\kappa(\delta_1,\delta_0,\tilde\delta_n) n^{\gamma-1} (1+\smallO(1))+\E_0[\iota''_n(\bar\delta_n)](\tilde\delta_n-\delta_0)\ ,\label{eqn:pop_est}
\end{align}
where $|\bar\delta_n-\delta_0|\leq |\tilde\delta_n-\delta_0|$. Clearly $\bar\delta_n\to\delta_0$. As shown in \cite{Gao2017}, $\iota''_n(\delta)$ also converges in probability to $\iota''(\delta)$ uniformly in $\delta\in[\delta_{\min},\delta_{\max}]$. In fact, convergence holds also in $L^1$, since $\iota''_n(\delta)$ is uniformly bounded (following the type of argument used before showing that $\iota'_n$ is uniformly bounded). In addition, it is also shown in \cite{Gao2017} that $\iota''(\delta_0)<0$. Therefore, for $n$ large enough,
$$\E_0[\iota''_n(\bar\delta_n)]=(1+\smallO(1))\iota''(\delta_0)<0\ .$$

We are now ready to show the second assertion in Proposition~\ref{prp:B}. Note that $\kappa(\delta_1,\delta_0,\tilde\delta_n)\to \kappa(\delta_1,\delta_0,\delta_0)$ since $\kappa$ is a continuous function. Putting all this together and re-writing the expression \eqref{eqn:pop_est} we conclude that
\begin{align}
\tilde\delta_n-\delta_0&=(1+\smallO(1))n^{\gamma-1} \frac{\kappa(\delta_1,\delta_0,\delta_0)}{|\iota''(\delta_0)|}\ ,
\end{align}
where
\begin{align}
\iota''(\delta_0)&=\frac{m}{(2m+\delta_0)^2}-\sum_{k\geq m}\frac{p_{>k}(\delta_0)}{(k+\delta_0)^2}\\
&=\frac{m}{(2m+\delta_0)^2}-\frac{m}{2m+\delta_0}\sum_{k\geq m}\frac{p_k(\delta_0)}{k+\delta_0}\ .
\end{align}
Therefore, after trivial algebraic manipulation, we conclude that
\begin{align}
\tilde\delta_n-\delta_0&=(1+\smallO(1))n^{\gamma-1} \frac{c(\delta_1-\delta_0)}{2m+\delta_1}\frac{-1+\sum_{k\geq m}\frac{2m+\delta_0}{k+\delta_0}p_k(\delta_0)}{{\sum_{k\geq m} \frac{1}{k+\delta_0}p_k(\delta_0)}-\frac{1}{2m+\delta_0}}\\
&=(1+\smallO(1))n^{\gamma-1} c(\delta_1-\delta_0)\frac{2m+\delta_0}{2m+\delta_1}\ ,
\end{align}
proving the second assertion in the proposition. For the last assertion note that $\delta\mapsto p_m(\delta)$ is a continuously differentiable function, so that
$$p_m(\tilde\delta_n)=p_m(\delta_0)+p'_m(\bar\delta_n)(\tilde\delta_n-\delta_0)\ ,$$
where $|\bar\delta_n-\delta_0|\leq |\tilde\delta_n-\delta_0|$ and
\begin{equation}\label{eqn:p_m_prime}
p'_m(\delta)=-\frac{1}{(m+\delta+2+\delta/m)^2}\ .
\end{equation}
Clearly $\bar\delta_n\to \delta_0$, and $p'_m(\delta_0)\neq 0$ and so
\begin{align}
\lefteqn{n(p_m(\delta_0)-p_m(\tilde\delta_n))}\\
&= -(1+\smallO(1)) np'_m(\delta_0)(\tilde\delta_n-\delta_0)\\
&= (1+\smallO(1)) n^{\gamma} \frac{1}{(m+\delta_0+2+\delta_0/m)^2}c(\delta_1-\delta_0)\frac{2m+\delta_0}{2m+\delta_1}\ .
\end{align}
\end{proof}

In conclusion, the sum of the terms $A$ and $B$ in \eqref{eqn:three_terms} equals $\alpha(\delta_0,\delta_1)(1+\smallO(1))n^{\gamma}$ with 
\begin{align}
\alpha(\delta_0,\delta_1)&=c \left[1-\frac{p_m(\delta_0)}{p_m(\delta_1)}+\frac{1}{(m+\delta_0+2+\delta_0/m)^2}(\delta_1-\delta_0)\frac{2m+\delta_0}{2m+\delta_1}\right]\\
&=c(\delta_0-\delta_1)\frac{m+\delta_0}{(2+\delta_1/m)(m+\delta_0+2+\delta_0/m)^2}\ .
\end{align}
Clearly, this takes the value $0$ when $\delta_0=\delta_1$, and it is non-zero otherwise.

To complete the proof of Theorem~\ref{thm:minimal_degree_test_known_delta} we need to characterize the term $C$ in \eqref{eqn:three_terms}. This has two components, the first one already studied in the proof of Theorem~\ref{thm:minimal_degree_test_known_delta}. For the second component we must characterize the deviations of $\hat\delta_n$ around $\tilde\delta_n$. Again, due to the implicit definition of the estimator this requires some care. The results are summarized in the following proposition, proven in the appendix:
\begin{proposition}\label{prp:C2}
The estimator $\hat\delta_n$ is consistent, both under the null and alternative models. Specifically,
$$\E_\ell[|\hat\delta_n-\delta_0|]\to 0\ ,$$
as $n\to\infty$, where $\ell\in\{0,1\}$. In addition,
$$|\hat\delta_n-\tilde\delta_n|=\smallO_{\P_\ell}\left(a_n/n\right)$$
where $a_n=\omega(\sqrt{n}\log n)$. Finally,
$$n(p_m(\hat\delta_n)-p_m(\tilde\delta_n))=\smallO_{\P_\ell}(a_n)\ .$$
\end{proposition}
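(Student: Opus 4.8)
### Proof Plan for Proposition~\ref{prp:C2}

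The plan is to exploit the same implicit-function strategy used for $\tilde\delta_n$ in Proposition~\ref{prp:B}, but now comparing the \emph{random} score $\iota'_n$ against its mean $\E_\ell[\iota'_n]$ rather than comparing $\E_1[\iota'_n]$ against $\E_0[\iota'_n]$. The three assertions are really three successive layers of the same argument, so I would organize the proof accordingly.

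\textbf{Step 1: Consistency of $\hat\delta_n$.} First I would establish that $\iota'_n(\delta)\to\iota'(\delta)$ in $L^1$ under \emph{both} models, uniformly in $\delta\in[\delta_{\min},\delta_{\max}]$ --- this is exactly \eqref{eqn:L1_convergence}, already proved in the excerpt. Since $\hat\delta_n$ is (for large $n$, with high probability) the unique zero of $\iota'_n$ in the interval, and $\iota'$ has a unique zero at $\delta_0$ with a definite sign change there (by \cite[Lemma~4]{Gao2017}), uniform convergence forces $\hat\delta_n\xrightarrow{\P_\ell}\delta_0$. To upgrade this to convergence in $L^1$ I would use that $\hat\delta_n$ is confined to the compact set $[\delta_{\min},\delta_{\max}]$, so $|\hat\delta_n-\delta_0|$ is bounded and convergence in probability implies convergence in mean by bounded convergence.

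\textbf{Step 2: Rate $|\hat\delta_n-\tilde\delta_n| = \smallOp[\P_\ell](a_n/n)$.} This is the crux. I would Taylor-expand $\iota'_n$ around $\tilde\delta_n$: since $\iota'_n(\hat\delta_n)=0$ and $\E_\ell[\iota'_n(\tilde\delta_n)]=0$, writing $\iota'_n(\hat\delta_n)=\iota'_n(\tilde\delta_n)+\iota''_n(\bar\delta_n)(\hat\delta_n-\tilde\delta_n)$ with $\bar\delta_n$ between the two points gives
\begin{equation}
\hat\delta_n-\tilde\delta_n = -\frac{\iota'_n(\tilde\delta_n)}{\iota''_n(\bar\delta_n)} = -\frac{\iota'_n(\tilde\delta_n)-\E_\ell[\iota'_n(\tilde\delta_n)]}{\iota''_n(\bar\delta_n)}\ .
\end{equation}
The denominator is handled by noting $\bar\delta_n\to\delta_0$ (since both $\hat\delta_n$ and $\tilde\delta_n$ do) and $\iota''_n(\bar\delta_n)\xrightarrow{\P_\ell}\iota''(\delta_0)<0$, so it is bounded away from zero with high probability. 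The numerator is the centered score at a deterministic point, so I must show $\iota'_n(\tilde\delta_n)-\E_\ell[\iota'_n(\tilde\delta_n)]=\smallOp[\P_\ell](a_n/n)$. Since $a_n=\omega(\sqrt n\log n)$, it suffices to show this fluctuation is $\bigOp[\P_\ell](\sqrt n \log n / n)=\bigOp[\P_\ell](\log n/\sqrt n)$, or even just $\bigOp[\P_\ell](1/\sqrt n)$. The natural tool is again a martingale concentration argument: $\iota'_n(\delta)$ is (up to the deterministic second sum) a linear functional $\tfrac1{n+1}\sum_{k\ge m}\tfrac1{k+\delta}N_{>k}(n)$ of the degree counts, and the Doob martingale $t\mapsto \E_\ell[\,\cdot\mid G_t]$ of this functional has bounded increments (each step adds $m$ edges, changing each $N_{>k}$ by $\bigO(m)$, and the weights $1/(k+\delta)$ are summable against $kN_{>k}\le 2nm$ as shown in the excerpt). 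Azuma--Hoeffding then gives concentration at scale $\sqrt n/(n+1)\asymp 1/\sqrt n$, which is comfortably $\smallOp[\P_\ell](a_n/n)$. One subtlety: the increments involve an infinite sum over $k$, so I would either truncate at $k=\smallO(n)$ (negligible tail since $N_{>k}=0$ for $k$ near $2nm$) or bound the increment directly using $\sum_k \tfrac1{k+\delta}|\Delta N_{>k}|\le C$ uniformly. The uniformity in $\delta$ from \eqref{eqn:L1_convergence} is not even needed here since $\tilde\delta_n$ is a specific deterministic point, though one must be mildly careful that $\tilde\delta_n$ itself moves with $n$ --- but it stays in the compact interval so all constants remain uniform.

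\textbf{Step 3: Transfer to $p_m$.} The last assertion follows by the mean value theorem exactly as at the end of the proof of Proposition~\ref{prp:B}: since $\delta\mapsto p_m(\delta)$ is continuously differentiable with $p'_m$ given by \eqref{eqn:p_m_prime}, and $\bar\delta_n\to\delta_0$ gives $p'_m(\bar\delta_n)=\bigO(1)$, we get $n(p_m(\hat\delta_n)-p_m(\tilde\delta_n)) = n\,p'_m(\bar\delta_n)(\hat\delta_n-\tilde\delta_n) = \bigO(n)\cdot\smallOp[\P_\ell](a_n/n) = \smallOp[\P_\ell](a_n)$.

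\textbf{Main obstacle.} The main work is Step 2, and within it the verification that the bounded-difference constant for the Doob martingale of $\iota'_n(\tilde\delta_n)$ really is $\bigO(1/n)$ after the $1/(n+1)$ normalization, uniformly in $n$. This requires carefully controlling how much a single vertex arrival can shift the weighted tail sum $\sum_k \tfrac1{k+\tilde\delta_n}N_{>k}(n)$ --- one extra edge to a vertex of degree $j$ increments $N_{>k}$ by $1$ for $j-1\le k<j$ (roughly), contributing $\bigO(1/j)$, and there are $\bigO(m)$ such edges per step, so the per-step change is $\bigO(1)$ and the normalized martingale increment is $\bigO(1/n)$. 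Making this estimate airtight (and noting it holds identically under $H_0$ and $H_1$, since in both cases exactly $m$ edges are added per step) is the only genuinely technical point; everything else is assembly of results already in the excerpt.
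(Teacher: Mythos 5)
Your proposal is correct and follows essentially the same route as the paper: consistency via the uniform $L^1$ convergence \eqref{eqn:L1_convergence} and the sign structure of $\iota'$, the rate via Azuma--Hoeffding applied to the Doob martingale of the score combined with a first-order Taylor expansion, and the final claim via the mean value theorem for $p_m$. The one organizational difference is which function you expand: you expand the random $\iota'_n$ around the deterministic point $\tilde\delta_n$, so you only need pointwise concentration of $\iota'_n(\tilde\delta_n)$ but must control the \emph{random} second derivative $\iota''_n(\bar\delta_n)$ at a random point under $\P_1$ (requiring uniform in-probability convergence of $\iota''_n$ under the alternative model, which takes an extra concentration step beyond Lemma~\ref{lem:second_derivative_difference}); the paper instead expands the deterministic mean $h_\ell(\delta)=\E_\ell[\iota'_n(\delta)]$ and evaluates the fluctuation at the random point $\hat\delta_n$, which is why its Lemma~\ref{lem:AZ_delta} is stated with a supremum over $\delta$ but only the deterministic convergence $\E_\ell[\iota''_n]\to\iota''$ is then needed. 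Two small corrections to your ``main obstacle'' discussion: the Doob martingale increment is a difference of \emph{conditional expectations}, not the direct one-step change of the degree counts, so your $\bigO(1)$ bound is not justified --- the paper's crude bound $|\E_\ell[N_{>k}(n)\mid G_t]-\E_\ell[N_{>k}(n)\mid G_{t-1}]|\le 2m$ summed against $1/(k+\delta)$ over $m\le k\le nm$ gives increments of order $\log n$; this is harmless, since (as you note) a fluctuation of order $\sqrt{n}\log n$ for the unnormalized score is exactly what the hypothesis $a_n=\omega(\sqrt{n}\log n)$ is calibrated to absorb.
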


With these results in hand we are ready to complete the proof of Theorem~\ref{thm:main2}. We proceed separately for the type I and type II error:

\subsection{\normalfont\emph{Type-I error}}
Recall the definition of $a_n$ in Theorem~\ref{thm:main2}, which satisfies $a_n=\omega(\sqrt{n}\log n)$ and $a_n=n^{\tfrac{1}{2}+o(1)}$. Refer to the decomposition of the test statistic $Q(G_n)$ in \eqref{eqn:three_terms}. Under the null model the term $B$ is equal to zero, and for the term $A$ we know (from \eqref{eqn:bounded_difference}) that
$$|\E_0[N_m(n)] - n \mspace{1mu} p_m(\delta_0)| \leq C_0(\delta_0,m)\coloneqq C_0\ .$$
Therefore
\begin{align}
\lefteqn{\P_0\left(|Q(G_n)|\geq a_n\right)}\\
&= \P_0\left(\left|\E_0[N_m(n)]-np_m(\delta_0)+N_m(n)-\E_0[N_m(n)]+np_m(\tilde \delta_n)-np_m(\hat \delta_n)\right|\geq a_n\right)\\
&\leq \P_0\left(\left|N_m(n)-\E_0[N_m(n)]+np_m(\tilde \delta_n)-np_m(\hat \delta_n)\right|\geq a_n-C_0\right)\\
&\leq \P_0\left(\left|N_m(n)-\E_0[N_m(n)]\right|\geq \frac{a_n-C_0}{2}\right) + \P_0\left(\left|np_m(\tilde \delta_n)-np_m(\hat \delta_n)\right|\geq \frac{a_n-C_0}{2}\right)\\
&=\smallO(1)\ ,
\end{align}
where the final result follows from \eqref{eqn:AH_Nm} and the last statement in Proposition~\ref{prp:C2}, since $a_n=\omega(\sqrt{n}\log n)$.

\subsection{\normalfont\emph{Type-II error}}

Referring again to the decomposition in \eqref{eqn:three_terms} we see that the terms $A$ and $B$ now play a crucial role. Namely, we have shown that the sum of the two terms equals
$\alpha(\delta_0,\delta_1)n^{\gamma}+\smallO(n^{\gamma})$, where 
\begin{align}
\alpha(\delta_0,\delta_1)&=c \left[1-\frac{p_m(\delta_0)}{p_m(\delta_1)}+\frac{1}{(m+\delta_0+2+\delta_0/m)^2}(\delta_1-\delta_0)\frac{2m+\delta_0}{2m+\delta_1}\right]\\
&=c(\delta_0-\delta_1)\frac{m+\delta_0}{(2+\delta_1/m)(m+\delta_0+2+\delta_0/m)^2}\label{eqn:shift_Q}\ .
\end{align}
Clearly, this takes the value $0$ when $\delta_0=\delta_1$, and it is non-zero otherwise. The characterization of the term $C$ remains exactly the same as under the null model. Therefore,
$$\E_\ell[N_m(n)]-np_m(\delta_0)+np_m(\delta_0)-np_m(\tilde \delta_n)=\alpha(\delta_0,\delta_1)(1+\smallO(1))n^{\gamma}=\omega(a_n)\ ,$$
since $a_n=n^{\tfrac{1}{2}+\smallO(1)}$ and $\gamma>\tfrac{1}{2}$, and therefore
$$\P_1(|Q(G_n)|\geq \tau_n)=1$$
concluding the proof of Theorem~\ref{thm:main2}.
\qed

\section{Asymptotic normality proofs}\label{sec:proof_asymptotic_normality_test_statistics}

This section is dedicated to the proof of Theorem~\ref{thm:asymptotic_normality_test_statistics}. The proof for the null model relies on a somewhat involved application of a martingale central limit theorem to an appropriately constructed martingale. The proof for the alternative model hinges on the null model result, as well as further estimates of the effect of the last $n-\tau_n$ vertices on the distribution of $N_k(n)$ for $k\geq m$. Since one proof hinges on the other, and to avoid any confusion between the null and alternative models, we separate these two cases in two sections. Specifically, in Section~\ref{sec:proof_asymptotic_normality_null_model} we prove the asymptotic normality of test statistics under the null model, i.e., \eqref{eq:asymptotic_normality_unknown_delta_test_statistic}. In Section~\ref{sec:proof_asymptotic_normality_alternative_model} we prove the asymptotic normality of test statistics under the alternative model, i.e., \eqref{eq:asymptotic_normality_known_delta_test_statistic_alternative_hypothesis} and \eqref{eq:asymptotic_normality_unknown_delta_test_statistic_alternative_hypothesis}.

\subsection{Asymptotic normality under the null hypothesis}\label{sec:proof_asymptotic_normality_null_model}
Let $\mathcal{N}(\mu,\Sigma)$ denote the normal distribution with mean $\mu$ and covariance matrix $\Sigma$. The following result establishes the joint asymptotic normality of $N_m(n)$ and $\hat\delta_n$, which can be them used to deduce \eqref{eq:asymptotic_normality_unknown_delta_test_statistic} by an application of the delta method:

\begin{proposition}[Joint normality of count of degree $m$ vertices and estimator for $\delta_0$]\label{prop:joint_asymptotic_normality_degree_counts_estimator_delta_zero}
Under the null model, as $n\to\infty$, 
\begin{align}\label{eq:asymptotic_normality_known_delta_test_statistic}
\sqrt{n}\begin{pmatrix}
\frac{N_m(n)-np_m(\delta_0)}{n} \\
\hat\delta_n-\delta_0
\end{pmatrix}
\xrightarrow{D} \mathcal{N}\left(\mspace{-3mu}\begin{pmatrix} 0\\0 \end{pmatrix},\Sigma(\delta_0,m)\right)\ ,
\end{align}
where the covariance matrix is
\begin{align}\label{eq:asymptotic_normality_covariance_matrix}
\Sigma(\delta_0,m) \coloneqq 
\begin{pmatrix}
w(\delta_0,m) & -\frac{b(\delta_0,m)}{v(\delta_0,m)}\\
-\frac{b(\delta_0,m)}{v(\delta_0,m)} & \frac{1}{v(\delta_0,m)}
\end{pmatrix}\ ,
\end{align}
where $w$ and $v$ are defined in \eqref{w-var-def} and \eqref{v-var-def}, and $b(\delta_0,m)\coloneqq \frac{m^2}{(\delta_0 + m(2 + m + \delta_0))^2}$.
\end{proposition}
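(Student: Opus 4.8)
The plan is to express both entries of the random vector in \eqref{eq:asymptotic_normality_known_delta_test_statistic} as, up to $\smallO_{\P_0}(1)$ errors, one and the same linear functional applied to the vector of centred degree counts $\bigl(N_j(n)-\E_0[N_j(n)]\bigr)_{j\ge m}$, and then to invoke the multivariate central limit theorem for these counts established in \cite{Baldassarri2021}. For the first coordinate this is immediate: by \eqref{eqn:bounded_difference}, $N_m(n)-np_m(\delta_0)=\bigl(N_m(n)-\E_0[N_m(n)]\bigr)+\bigO(1)$. For the second coordinate, start from the score equation $\iota'_n(\hat\delta_n)=0$ and Taylor expand around $\delta_0$, giving $\hat\delta_n-\delta_0=-\iota'_n(\delta_0)/\iota''_n(\bar\delta_n)$ for some $\bar\delta_n$ between $\hat\delta_n$ and $\delta_0$. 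Using consistency of $\hat\delta_n$ and the uniform (in fact $L^1$) convergence $\iota''_n\to\iota''$ borrowed from \cite{Gao2017}, together with $\iota''(\delta_0)=-v(\delta_0,m)<0$ (as computed in the proof of Proposition~\ref{prp:B}), this becomes $\hat\delta_n-\delta_0=v(\delta_0,m)^{-1}\iota'_n(\delta_0)\,(1+\smallO_{\P_0}(1))$. Finally, since $\iota'_n(\delta_0)$ is a score we have $\E_0[\iota'_n(\delta_0)]=0$, and rewriting \eqref{eqn:iota_n_prime} via the Abel-summation identity $\sum_{k\ge m}\tfrac{1}{k+\delta_0}N_{>k}(n)=\sum_{j>m}c_j N_j(n)$ with $c_j\coloneqq\sum_{k=m}^{j-1}\tfrac{1}{k+\delta_0}$ gives $(n+1)\,\iota'_n(\delta_0)=\sum_{j>m}c_j\bigl(N_j(n)-\E_0[N_j(n)]\bigr)$.

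With both coordinates written as linear functionals of the centred counts, joint asymptotic normality follows from the CLT of \cite{Baldassarri2021} through the Cram\'er--Wold device. (Alternatively one could bypass \cite{Baldassarri2021} and run a bivariate martingale CLT directly on the Doob martingales attached to $N_m(n)$ and to the score, reading off the covariance from the limiting predictable quadratic covariation; I expect the route via \cite{Baldassarri2021} to be shorter.) The one nontrivial point is that the functional defining $\hat\delta_n$ involves infinitely many coordinates with slowly growing weights $c_j\asymp\log j$, so the finite-dimensional convergence must be supplemented by a truncation estimate: the tail $n^{-1/2}\sum_{j>K}c_j\bigl(N_j(n)-\E_0[N_j(n)]\bigr)$ should have variance tending to $0$ as $K\to\infty$, uniformly in $n$. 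This can be obtained from the deterministic bound $\sum_j jN_j(n)=2mn$ and the polynomial decay (exponent $3+\delta_0/m>2$) of the rescaled covariances $n^{-1}\mathrm{Cov}_0(N_j(n),N_{j'}(n))$ controlled in \cite{Baldassarri2021}, which make $\sum_{j,j'>K}c_jc_{j'}\,\bigl|n^{-1}\mathrm{Cov}_0(N_j(n),N_{j'}(n))\bigr|\to 0$.

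It then remains to identify the three entries of $\Sigma(\delta_0,m)$. The $(1,1)$ entry is $w(\delta_0,m)$, which is precisely \eqref{eq:asymptotic_normality_known_delta_test_statistic_null_hypothesis} from \cite{Baldassarri2021}. For the $(2,2)$ entry, the representation above together with the information identity gives $n\,\mathrm{Var}_0(\iota'_n(\delta_0))\to v(\delta_0,m)$, hence $\sqrt n(\hat\delta_n-\delta_0)\xrightarrow{D}\mathcal N(0,1/v(\delta_0,m))$, recovering \cite{Gao2017}. For the off-diagonal entry I would use that $(n+1)\iota'_n(\delta_0)$ is the score of the null model viewed as a parametric family in the constant attachment parameter, so that $\mathrm{Cov}_0\bigl((n+1)\iota'_n(\delta_0),S\bigr)=\tfrac{\partial}{\partial\delta}\E_\delta[S]\big|_{\delta=\delta_0}$ for any statistic $S$; applied with $S=N_m(n)$ and $\E_\delta[N_m(n)]=np_m(\delta)+\bigO(1)$, with $p'_m$ as in \eqref{eqn:p_m_prime}, this yields $\mathrm{Cov}_0(\iota'_n(\delta_0),N_m(n))=(1+\smallO(1))\,n\,p'_m(\delta_0)$. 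Combining, $\Sigma_{12}=\lim_n n^{-1}\mathrm{Cov}_0\bigl(N_m(n),n(\hat\delta_n-\delta_0)\bigr)=p'_m(\delta_0)/v(\delta_0,m)=-b(\delta_0,m)/v(\delta_0,m)$, the last step being the algebraic identity $b(\delta_0,m)=-p'_m(\delta_0)=(m+\delta_0+2+\delta_0/m)^{-2}$ (since $\delta_0+m(2+m+\delta_0)=m\,(m+\delta_0+2+\delta_0/m)$). As an immediate consequence, the delta method applied to $Q(G_n)/\sqrt n=(1,-p'_m(\delta_0))\cdot\sqrt n\bigl((N_m(n)-np_m(\delta_0))/n,\ \hat\delta_n-\delta_0\bigr)^{\mathsf T}+\smallO_{\P_0}(1)$ gives \eqref{eq:asymptotic_normality_unknown_delta_test_statistic}, with limiting variance $w(\delta_0,m)-2p'_m(\delta_0)\Sigma_{12}+p'_m(\delta_0)^2/v(\delta_0,m)=w(\delta_0,m)+u(\delta_0,m)$.

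The main obstacle I anticipate is the rigorous justification of the off-diagonal computation: differentiating $\E_\delta[N_m(n)]$ under the sum is routine for fixed $n$, but one needs the $\bigO(1)$ remainder in $\E_\delta[N_m(n)]=np_m(\delta)+\bigO(1)$ to have derivative $\smallO(n)$, uniformly on a neighbourhood of $\delta_0$; this likely requires a $\delta$-differentiable refinement of \eqref{eqn:bounded_difference}, e.g.\ by differentiating the recursion for $\E_\delta[N_m(t)]$ and tracking the error. An alternative that avoids this is to compute $\Sigma_{12}=v(\delta_0,m)^{-1}\sum_{j>m}c_j\lim_n n^{-1}\mathrm{Cov}_0(N_m(n),N_j(n))$ directly from the explicit limiting covariance of \cite{Baldassarri2021}, at the cost of a longer closed-form summation; either way, combined with the truncation estimate above, this is where the real work lies.
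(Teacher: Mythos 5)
Your route is viable and genuinely different from the one in the paper. The paper proves joint normality by applying a \emph{bivariate} martingale CLT directly to the pair of martingale difference arrays $(X_{t,i},Y_{t,i})$ already used in \cite{Baldassarri2021} and \cite{Gao2017} for the two marginals, so that the only new work is the explicit computation of the predictable cross term $\E[X_{t,i}Y_{t,i}\mid G_{t,i-1}]$, followed by a linear transformation and Slutsky. You instead write both coordinates as linear functionals of the centred degree counts (your Abel-summation identity $(n+1)\iota_n'(\delta_0)=\sum_{j>m}c_j(N_j(n)-\E_0[N_j(n)])$ is correct) and identify the off-diagonal entry via the Bartlett/score identity $\mathrm{Cov}_0((n+1)\iota_n'(\delta_0),N_m(n))=\tfrac{\partial}{\partial\delta}\E_\delta[N_m(n)]\big|_{\delta_0}$. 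Your final covariance matrix agrees with the paper's: $\Sigma_{12}=p_m'(\delta_0)/v=-b/v$, and the delta-method variance $w-(p_m')^2/v=w+u$ checks out (there is a harmless factor-of-$n$ slip where you write $\mathrm{Cov}_0(\iota_n'(\delta_0),N_m(n))=(1+\smallO(1))\,n\,p_m'(\delta_0)$; it should be $(1+\smallO(1))\,p_m'(\delta_0)$ after dividing out the $(n+1)$ in front of the score). Your approach is conceptually attractive because it derives $\Sigma_{12}$ from a structural identity rather than a computation, and it makes transparent why the answer is $p_m'(\delta_0)/v$.

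That said, the two obstacles you flag are exactly the price of this route, and neither is closed in your sketch: (a) the tail/truncation estimate requires covariance bounds $n^{-1}|\mathrm{Cov}_0(N_j(n),N_{j'}(n))|$ that are uniform in $n$ and summable against the logarithmically growing weights $c_jc_{j'}$ — \cite{Baldassarri2021} gives finite-dimensional convergence and the limiting covariances, but a uniform-in-$n$ bound valid for $j,j'$ growing with $n$ is a genuinely stronger statement that would need its own proof; (b) the score-covariance identity needs the derivative of the $\bigO(1)$ remainder in $\E_\delta[N_m(n)]=np_m(\delta)+\bigO(1)$ to be $\smallO(n)$ uniformly near $\delta_0$, which does not follow from the bound itself and would require differentiating the recursion behind \eqref{eqn:bounded_difference}. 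The paper's martingale route sidesteps both issues at once — no infinite linear combination appears, and the cross term is computed from one-step conditional expectations — which is why it is shorter in practice even though your identities make the limiting constants more interpretable.
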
%
\begin{proof}[Proof of Proposition~\ref{prop:joint_asymptotic_normality_degree_counts_estimator_delta_zero}]
The convergence of the marginals in \eqref{eq:asymptotic_normality_known_delta_test_statistic} follows from results in the literature, which we briefly recall here. From the general result in \cite{Baldassarri2021} it follows that, under the null model,
\begin{align}%
\frac{N_m(n)-np_m(\delta_0)}{\sqrt{n}}\ \xrightarrow{D} \mathcal{N}\left(0,w(\delta_0,m)\right)\ ,
\end{align}%
as $n\to\infty$. Moreover, from \cite{Gao2017} follows that, under the null model,
\begin{align}
\sqrt{n}(\hat\delta_n-\delta_0)\ \xrightarrow{D} \mathcal{N}\left(0,1/v(\delta_0,m)\right)\ ,
\end{align}
as $n\to\infty$. To show joint convergence, we apply the multivariate martingale central limit theorem (MMCLT) to (a linear transformation of) the vector on the left-hand side of \eqref{eq:asymptotic_normality_known_delta_test_statistic}. For more details on the MMCLT, see, e.g., \cite{crimaldi2005convergence} and references therein. In fact, the asymptotic normality of the marginals has been proven respectively in \cite{Baldassarri2021} and \cite{Gao2017} by applying a martingale central limit theorem to appropriately constructed martingale difference sequences $X_{t,i}$ and $Y_{t,i}$, which we define precisely later on. To obtain our results, we apply the MMCLT to the (square-integrable) martingale difference array $(X_{t,i}, Y_{t,i})$. The MMCLT requires two ingredients. First, the components of the array must satisfy the Lindeberg condition. Since this is also an ingredient for the univariate martingale central limit theorem, it has already been verified for $X_{t,i}$ in \cite[Section 3.2]{Baldassarri2021}  and for  $Y_{t,i}$ in \cite[Section 4]{Gao2017}. Second, one should compute the asymptotic expression for the covariance matrix. Note that the asymptotic variance of $X_{t,i}$ (resp.~$Y_{t,i}$) has already been computed explicitly in \cite{Baldassarri2021} (resp.~\cite{Gao2017}).

We begin by introducing the notation required to define the martingale difference array $(X_{t,i}, Y_{t,i})$, which, roughly speaking, is a linear transformation of $(N_m(t), \hat\delta_t)$. We then compute the asymptotic covariance matrix of $(X_{t,i}, Y_{t,i})$. Finally, we perform a linear transformation to deduce a joint central limit theorem for $(N_m(n), \hat\delta_n)$.

To construct the martingale difference sequences, it is important to consider the ``intermediate'' $m$ steps in the preferential attachment process, at each time $t$. For $i=1,\ldots,m-1$, we denote by $N_k(s,i)$ the number of vertices of degree $k$ after the $i$-th edge has been attached at time $s$, excluding the vertex $s$. We set $N_k(s+1,0)\coloneqq N_k(s,m) = N_k(s)$. We further define $D_{s,i}\coloneqq D_{v_{s,i}}(G_{s,i-1})$. In other words, $D_{s,i}$ is the degree of the vertex which will be attached to when constructing $G_{s,i}$ from $G_{s,i-1}$. For compactness, we define the constants
\begin{align}%
W_{t,i} \coloneqq \prod_{s=1}^{t-1} \prod_{j=1}^{m} a_{s,j} \prod_{j=1}^{i} a_{t,j}\qquad \text{where }\qquad a_{t,i} \coloneqq 1 - \frac{m+\delta_0}{S_{t,i-1}} \ ,
\end{align}%
and where
\begin{align}%
S_{t,i} &\coloneqq t \delta_0 + 2(t-1)m + i\ .
\end{align}%
Define also
\begin{align}
\xi &\coloneqq \frac{m+\delta_0}{2m + \delta_0}\ ,\\
\Lambda &\coloneqq \prod_{j=0}^{m-1} \frac{\Gamma\bigl(1-\frac{2m-j}{2m+\delta_0}\bigr)}{\Gamma\bigl(1-\frac{3m-j+\delta_0}{2m+\delta_0}\bigr)}\ .
\end{align}%
In the constants defined above, we omitted the dependence on $\delta_0$ and $m$ to facilitate readability of the computations that follow. The martingale difference sequences (from \cite{Baldassarri2021,Gao2017a}) are defined as
\begin{align}
X_{t,i} &\coloneqq \left(\frac{N_m(t,i) - a_{t,i-1} N_m(t,i-1) + \mathds 1{\{i = m\}}}{W_{t,i-1}}\right) \frac{1}{n^{\tfrac{1}{2} + m \xi}}\ ,\\
Y_{t,i} &\coloneqq \left(\frac{1}{D_{t,i} + \delta_0} - \frac{t}{S_{t,i-1}}\right) \frac{1}{n^{\tfrac{1}{2}}}\ .
\end{align}%
Here $X_{t,i}$ is the first component of the martingale difference array in \cite{Baldassarri2021}, and $Y_{t,i}$ is the martingale difference defined in \cite{Gao2017}. Next, we compute the asymptotic covariance matrix. By \cite[Section 3.1]{Baldassarri2021} and \cite[Section 4]{Gao2017}, as $n\rightarrow \infty$,
\begin{align}
\sum_{t=2}^n \sum_{i=1}^m \E[X_{t,i}^2 | G_{t,i-1}]
  &\xrightarrow{\P_0} \frac{1}{\Lambda^2} \frac{m}{1 + 2 m \xi} p_m \xi \bigl(1 - p_m \xi\bigr)\\
  &= \frac{1}{\Lambda^2} \frac{m^2 (m+\delta_0) (1 + m + \delta_0) (2m+\delta_0)}{(\delta_0 + 2m(1+m+\delta_0)) (\delta_0 + m(2+m+\delta_0))^2} \ , \label{eq:asymptotic_variance_X}\\
\sum_{t=2}^n \sum_{i=1}^m \E[Y_{t,i}^2 | G_{t,i-1}]
  &\xrightarrow{\P_0} \sum_{k=1}^\infty \frac{m p_k}{(k+\delta_0)(2m+\delta_0)} - \frac{m}{(2m+\delta_0)^2}\label{eq:asymptotic_variance_Y}\ .
\end{align}
We are left to explicitly compute $\E[X_{t,i} Y_{t,i} | G_{t,i-1}]$. First, since $\E[Y_{t,i} | G_{t,i-1}] = 0$, 
\begin{align}
\E[X_{t,i} Y_{t,i} | G_{t,i-1}]
&= \E\left[\frac{1}{n^{\tfrac{1}{2} + m \xi}} \frac{N_m(t,i)}{W_{t,i-1}} Y_{t,i} \middle| G_{t,i-1}\right]\\
&= \frac{1}{n^{1 + m \xi}} \frac{1}{W_{t,i-1}} \E\left[\frac{N_m(t,i)}{D_{t,i} + \delta_0} \middle| G_{t,i-1}\right] \\
&\quad- \frac{1}{n^{1 + m \xi}} \frac{1}{W_{t,i-1}} \frac{t}{S_{t,i-1}} \E\left[N_m(t,i) \middle| G_{t,i-1}\right]\ .
\end{align}
To continue, we determine the above two (conditional) expectations. We start by computing
\begin{align}
\E\left[\frac{N_m(t,i)}{D_{t,i} + \delta_0} \middle| G_{t,i-1}\right]
&= \frac{N_m(t,i-1) - 1 + \mathds 1{\{i = m\}}}{m + \delta_0} \P(D_{t,i} = m | G_{t,i-1})\\
&\quad{} + \sum_{k=m+1}^\infty \frac{N_m(t,i-1) + \mathds 1{\{i = m\}}}{k + \delta_0} \P(D_{t,i} = k | G_{t,i-1}) \notag\\
&= \sum_{k=m}^\infty \frac{N_m(t,i-1) + \mathds 1{\{i = m\}}}{k + \delta_0} \P(D_{t,i} = k | G_{t,i-1})\\
&\quad- \frac{1}{m + \delta_0} \P(D_{t,i} = m | G_{t,i-1})\\
&= \frac{t}{S_{t,i-1}} \bigl(N_m(t,i-1) + \mathds 1{\{i = m\}}\bigr) - \frac{N_m(t,i-1)}{S_{t,i-1}} \,.
\end{align}
Also, from \cite{Baldassarri2021},
\begin{equation}
\E\left[N_m(t,i) \middle| G_{t,i-1}\right]
  = \left(1 - \frac{m+\delta_0}{S_{t,i-1}}\right) N_m(t,i-1) + \mathds 1{\{i = m\}}
 \,.
\end{equation}
Combining the above, we get
\begin{align}
\E[X_{t,i} Y_{t,i} | G_{t,i-1}]
&= \frac{1}{n^{1 + m \xi}} \frac{1}{W_{t,i-1}} \E\left[\frac{N_m(t,i)}{D_{t,i} + \delta_0} \middle| G_{t,i-1}\right] \\
&\quad- \frac{1}{n^{1 + m \xi}} \frac{1}{W_{t,i-1}} \frac{t}{S_{t,i-1}} \E\left[N_m(t,i) \middle| G_{t,i-1}\right]\\
&= \frac{1}{n^{1 + m \xi}} \frac{1}{W_{t,i-1}} \frac{t}{S_{t,i-1}} \bigg[N_m(t,i-1) + \mathds 1{\{i = m\}} - \frac{N_m(t,i-1)}{t}\\
&\hspace{90pt}{} - \left(1 - \frac{m+\delta_0}{S_{t,i-1}}\right) N_m(t,i-1) - \mathds 1{\{i = m\}}\bigg] \notag\\
&= \frac{1}{n^{1 + m \xi}} \frac{N_m(t,i-1)}{W_{t,i-1} S_{t,i-1}} \left[\frac{t}{S_{t,i-1}}(m+\delta_0) - 1\right] \,.
\end{align}
To compute the limiting correlation we use that, uniformly over $i = 1, \ldots, m$, as $t\to\infty$,
\begin{equation}
\frac{N_m(t,i)}{t} \xrightarrow{\P_0} p_m\,,
\qquad
\frac{S_{t,i}}{t} \to 2 + \delta_0\ .
\end{equation}
Furthermore, tedious but straightforward computations show that, uniformly over $i = 1, \ldots, m$, as $t\to\infty$,
\begin{equation}
W_{t,i} \asymp \frac{\Lambda}{t^{m \xi}}\ ,
\end{equation}
where, crucially, $\Lambda$ is not a function of $t$. Hence, as $t\to\infty$,
\begin{equation}
\E[X_{t,i} Y_{t,i} | G_{t,i-1}]
  = \frac{1}{n^{1 + m \xi}} \frac{t^{m \xi}}{\Lambda} \frac{p_m}{(2m + \delta_0)} (\xi - 1)(1+\smallOp(1)) \,.
\end{equation}
Summing these terms and using $\sum_{t=1}^n t^{x-1} \asymp n^{x} / x$ as $n \to \infty$, gives
\begin{align}\label{eq:asymptotic_covariance_final_result}
\sum_{t=2}^n \sum_{i=1}^m \E[X_{t,i} Y_{t,i} | G_{t,i-1}]
  &\xrightarrow{\P_0} \frac{1}{\Lambda} \frac{m p_m}{(2m + \delta_0)} \frac{\xi - 1}{1 + m\xi}\\
&\quad= - \frac{m^2}{(\delta_0 + m(2 + m + \delta_0))^2} \frac{1}{\Lambda}\ .
\end{align}
In conclusion, putting together \eqref{eq:asymptotic_variance_X}, \eqref{eq:asymptotic_variance_Y}, and \eqref{eq:asymptotic_covariance_final_result} and applying the MMCLT (see, e.g., \cite{crimaldi2005convergence}) gives, as $n\to\infty$,
\begin{equation}
\sqrt{n} \left(\mspace{-3mu}
\begin{pmatrix}
\frac{N_m(n)/n}{\Lambda} \\ \iota_n'(\delta_0)
\end{pmatrix}
-
\begin{pmatrix}
\frac{p_m}{\Lambda} \\ \iota'(\delta_0)
\end{pmatrix}\mspace{-3mu}
\right)
\xrightarrow{D}\mathcal{N}\left(\mspace{-3mu}\begin{pmatrix} 0\\0 \end{pmatrix},\tilde{\Sigma}\right)\ ,\label{eq:asymptotic_normality_proof_intermediate_result}
\end{equation}
where
\begin{equation}
\widetilde{\Sigma} \coloneqq
\begin{pmatrix}
\frac{1}{\Lambda^2} \frac{m^2 (m+\delta_0) (1 + m + \delta_0) (2m+\delta_0)}{(\delta_0 + 2m(1+m+\delta_0)) (\delta_0 + m(2+m+\delta_0))^2}
  & -\frac{1}{\Lambda} \frac{m^2}{(\delta_0 + m(2 + m + \delta_0))^2}\\
-\frac{1}{\Lambda} \frac{m^2}{(\delta_0 + m(2 + m + \delta_0))^2}
  & v(\delta_0,m)
\end{pmatrix}\ .
\end{equation}
Recall that $v(\delta_0,m)$ is given in \eqref{v-var-def}. Note that the above central limit theorem result is not for $(N_m(n) / n, \hat{\delta}_n)$. This is because the martingale difference sequences are defined as a linear transformation of $(N_m(n) / n, \hat{\delta}_n)$. Therefore, next we apply a linear transformation to \eqref{eq:asymptotic_normality_proof_intermediate_result} to obtain a central limit theorem for $(N_m(n) / n, \hat{\delta}_n)$. To this end, observe that (see \cite[proof of Theorem~2]{Gao2017}),
\begin{equation}
\sqrt{n}(\hat{\delta}_n - \delta_0)
  = -\sqrt{n}\frac{(\iota_n'(\delta_0) - \iota'(\delta_0))}{\iota_n''(\bar\delta_n)} \,,
\end{equation}
where $\bar\delta_n$ lies between $\delta_0$ and $\hat{\delta}_n$, and so $\iota_n''(\bar\delta_n) \xrightarrow{\P_0} \iota''(\delta_0) = -v(\delta_0,m)$.
Using the above we obtain
\begin{align}%
\sqrt{n}(N_m(n)/n - p_m) &= \sqrt{n}\Lambda\left(\frac{N_m(n)/n}{\Lambda} -\frac{p_m}{\Lambda}\right)\\
\sqrt{n}(\hat{\delta}_n-\delta_0) &= \frac{\sqrt{n}}{v(\delta_0,m)}(\iota_n'(\delta_0) - \iota'(\delta_0))\ .
\end{align}%
In conclusion, using Slutsky's lemma, as $n\to\infty$,
\begin{align}%
\sqrt{n} \left(\mspace{-3mu}
\begin{pmatrix}
N_m(n)/n \\ \hat{\delta}_n
\end{pmatrix}
-
\begin{pmatrix}
p_m \\ \delta_0
\end{pmatrix}\mspace{-3mu}
\right)
\xrightarrow{D}
\mathcal{N}\left(\mspace{-3mu}\begin{pmatrix} 0\\0 \end{pmatrix},\Sigma(\delta_0,m)\right)\ ,
\end{align}%
where
\begin{equation}
\Sigma(\delta_0,m) \coloneqq 
\begin{pmatrix}
\frac{m^2 (m+\delta) (1 + m + \delta) (2m+\delta)}{(\delta + 2m(1+m+\delta)) (\delta + m(2+m+\delta))^2}
  & -\frac{m^2}{(\delta + m(2 + m + \delta))^2} \frac{1}{v(\delta_0,m)}\\
-\frac{m^2}{(\delta + m(2 + m + \delta))^2} \frac{1}{v(\delta_0,m)}
  &  \frac{1}{v(\delta_0,m)}
\end{pmatrix}\ .
\end{equation}

\end{proof}

The proof of \eqref{eq:asymptotic_normality_unknown_delta_test_statistic} is now a simple consequence of Proposition~\ref{prop:joint_asymptotic_normality_degree_counts_estimator_delta_zero} together with the delta method. Define the function $h(x, y) \coloneqq x - p_m(y)$. The gradient of $h(\cdot, \cdot)$ is given by
\begin{equation}
\nabla h(x, y) = (1, -p_m'(y)) = \Bigl(
1,\; \frac{m^2}{(y + m(2 + m + y))^2}
\Bigr),
\end{equation}
where $p_m'(y)$ is given in \eqref{eqn:p_m_prime}. Using $h(\cdot,\cdot)$, we can rewrite
\begin{equation}
\label{eq:unknown_delta_test_statistic2}
\frac{Q(G_n)}{\sqrt n} = \frac{N_m(n) - n \mspace{1mu} p_m(\hat{\delta}_n)}{\sqrt{n}}
  = \sqrt{n} \mspace{2mu} h\bigl(N_m(G_n)/n, \hat{\delta}_n\bigr)\,.
\end{equation}
Hence, by applying the delta method we get that \eqref{eq:unknown_delta_test_statistic2} converges in distribution as $n\to\infty$ to a normal distribution with variance 
\begin{align}
\label{eq:asymptotic_variance}
&\lim_{n \to \infty} \mathrm{Var}\Big(\frac{N_m(G_n) - n \mspace{1mu} p_m(\hat{\delta}_n)}{\sqrt{n}}\Big)= \bigl(\nabla h(p_m(\delta_0), \delta_0)\bigr)^{\! T} \:  \Sigma(\delta_0,m)\; \bigl(\nabla h(p_m(\delta_0), \delta_0)\bigr)\\
&\quad= \frac{m^2 (m+\delta_0) (1 + m + \delta_0) (2m + \delta_0)}{(\delta_0 + 2m(1 + m + \delta_0)) (\delta_0 + m(2 + m + \delta_0))^2} - \frac{m^4}{v(\delta_0,m) (\delta_0 + m(2 + m + \delta_0))^4} \,. \notag
\end{align}
where $\Sigma(\delta_0,m)$ and $v(\delta_0,m)$ are given by \eqref{eq:asymptotic_normality_covariance_matrix} and \eqref{v-var-def}, respectively.
\qed

\subsection{Asymptotic normality under the alternative hypothesis}
\label{sec:proof_asymptotic_normality_alternative_model}
The main insight is that most of the contribution for the asymptotic distribution of the degree counts is due to the attachment process up to the changepoint. In fact, the asymptotic distribution of $(N_m(n),N_{m+1}(n),\ldots)$ is normal, both under the null and alternative models, with exactly the same covariance structure but with different means. Specifically, in \cite{Baldassarri2021} it was shown that under the null model
$$\left(\frac{N_k(n)-\E_0[N_k(n)]}{\sqrt{n}}\right)_{k\geq m} \xRightarrow{D} \left(Z_k\right)_{k\geq m}\ ,$$
as $n\to\infty$, where the right-hand side is a zero-mean Gaussian process with covariance given in \cite[Theorem 2.5]{Baldassarri2021}, and the notation $(X_k(n))_{k\geq m} \xRightarrow{\smash{\raisebox{-1.0pt}{$\scriptstyle{}D$}}} (Z_k)_{k\geq m}$ means that for any $k\geq m$ we have $(X_m(n),\ldots,X_k(n)) \xrightarrow{\smash{\raisebox{-1.5pt}{$\scriptstyle{}D$}}} (Z_m,\ldots,Z_k)$ as $n\to\infty$ (i.e., the infinite vector converges in the product topology). The following lemma generalizes this result to our alternative model, immediately implying \eqref{eq:asymptotic_normality_known_delta_test_statistic_alternative_hypothesis}. Furthermore, it provides a stepping stone towards the proof of \eqref{eq:asymptotic_normality_unknown_delta_test_statistic_alternative_hypothesis}:

\begin{lemma}\label{lem:joint_CLT}
Under the alternative model with $\gamma\in(0,1)$
$$\left(\frac{N_k(n)-\E_1[N_k(n)]}{\sqrt{n}}\right)_{k\geq m} \xRightarrow{D} \left(Z_k\right)_{k\geq m}\ ,$$
as $n\to\infty$ where the right-hand side is a zero-mean Gaussian process with covariance given in \cite[Theorem 2.5]{Baldassarri2021}.
\end{lemma}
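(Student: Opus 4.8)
The plan is to deduce the statement from the null-model result of \cite{Baldassarri2021} \emph{applied at time $\tau_n$}, exploiting that the law of $G_{\tau_n}$ is identical under $\P_0$ and $\P_1$ and that only $n-\tau_n=cn^\gamma=\smallO(n)$ vertices are added after the changepoint. Since the asserted convergence is in the product topology, it suffices to treat each finite block $(N_m(n),\dots,N_K(n))$, $K\geq m$; I will argue coordinate-wise, the vector statement being identical because the cited limit theorem is already joint. For $k\geq m$ write, using $\E_1[N_k(\tau_n)]=\E[N_k(\tau_n)]$,
\begin{equation*}
\frac{N_k(n)-\E_1[N_k(n)]}{\sqrt n}=\frac{N_k(\tau_n)-\E[N_k(\tau_n)]}{\sqrt n}+\frac{\Delta_k-\E_1[\Delta_k]}{\sqrt n},\qquad \Delta_k\coloneqq N_k(n)-N_k(\tau_n).
\end{equation*}
Applying \cite[Theorem~2.5]{Baldassarri2021} to the constant-$\delta_0$ graph $G_{\tau_n}$ gives $(N_k(\tau_n)-\E[N_k(\tau_n)])/\sqrt{\tau_n}\xRightarrow{D}(Z_k)_{k\geq m}$, and since $\sqrt{\tau_n}/\sqrt n\to 1$, Slutsky's lemma identifies the first term's limit as $(Z_k)_{k\geq m}$. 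It thus remains to show the remainder is negligible: $\Delta_k-\E_1[\Delta_k]=\smallOp(\sqrt n)$.

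For the remainder I would split once more,
\begin{equation*}
\Delta_k-\E_1[\Delta_k]=\bigl(\Delta_k-\E_1[\Delta_k\mid G_{\tau_n}]\bigr)+\bigl(R_k(G_{\tau_n})-\E[R_k(G_{\tau_n})]\bigr),\qquad R_k(G_{\tau_n})\coloneqq\E_1[\Delta_k\mid G_{\tau_n}].
\end{equation*}
Conditionally on $G_{\tau_n}$, the process $s\mapsto\E_1[\Delta_k\mid G_s]$, $\tau_n\leq s\leq n$, is a martingale with increments bounded by $2m$ (as in the proof of Theorem~\ref{thm:minimal_degree_test_known_delta}, via \cite[Lemma~8.6]{VanderHofstad2017}), so Azuma--Hoeffding gives $\Delta_k-\E_1[\Delta_k\mid G_{\tau_n}]=\bigOp(\sqrt{n-\tau_n})=\bigOp(n^{\gamma/2})=\smallOp(\sqrt n)$ since $\gamma<1$. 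The heart of the matter is controlling the fluctuations of the correction $R_k(G_{\tau_n})$. Expanding,
\begin{equation*}
R_k(G_{\tau_n})=\sum_{v\in[n]\setminus[\tau_n]}\P_1(D_v(n)=k\mid G_{\tau_n})+\sum_{v\in[\tau_n]}\bigl(\P_1(D_v(n)=k\mid G_{\tau_n})-\mathds 1\{D_v(\tau_n)=k\}\bigr),
\end{equation*}
and using refinements of Lemma~\ref{lem:crucial} — the probability that an old vertex of degree $j$ at time $\tau_n$ changes degree in the last $n-\tau_n$ steps is $(1+\smallO(1))\,cmn^{\gamma-1}(j+\delta_1)/(2m+\delta_1)$, and two or more extra edges are of lower order — one obtains that, up to a $\smallOp(\sqrt n)$ error, $R_k(G_{\tau_n})$ is an affine functional of the degree counts $\{N_j(\tau_n)\}_{j\geq m}$ with coefficients of order $n^{\gamma-1}$. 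Since these counts concentrate at scale $\sqrt n$ (again by \cite{Baldassarri2021}, or by a crude Azuma bound on the Doob martingale of each $N_j(\tau_n)$), it follows that $R_k(G_{\tau_n})-\E[R_k(G_{\tau_n})]=\bigOp(n^{\gamma-1}\sqrt n)=\bigOp(n^{\gamma-1/2})=\smallOp(\sqrt n)$.

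I expect this last step to be the main obstacle: $R_k(G_{\tau_n})$ is itself of deterministic size $\bigTheta(n^\gamma)$, which for $\gamma>\tfrac12$ exceeds $\sqrt n$, so no naive bound suffices and one genuinely needs the cancellation produced by centering — i.e.\ one must verify that $R_k$ depends on $G_{\tau_n}$ only through smooth, small-coefficient functionals of the degree statistics, which requires making the Lemma~\ref{lem:crucial}-type expansion precise and uniform in $k$ over the finite block. An essentially equivalent route, which I would fall back on if the functional bookkeeping became unwieldy, is to run the Doob martingale $t\mapsto\E_1[\Delta_k\mid G_t]$ for $t\leq\tau_n$ and show its increments are $\bigO(n^{\gamma-1})$ rather than $\bigO(1)$ — revealing one vertex's attachments affects $\Delta_k$, as opposed to $N_k(n)$ itself, only through the $\bigO(n^{\gamma-1})$-probable event of further propagation after the changepoint — whence Azuma--Hoeffding again yields $\bigOp(\sqrt{\tau_n}\,n^{\gamma-1})=\smallOp(\sqrt n)$. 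Combining the three displays then gives $(N_k(n)-\E_1[N_k(n)])/\sqrt n\xRightarrow{D}(Z_k)_{k\geq m}$, as claimed.
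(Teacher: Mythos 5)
Your proposal is correct and follows essentially the same route as the paper: condition on $G_{\tau_n}$, kill the post-changepoint fluctuations with Azuma--Hoeffding over the last $n-\tau_n$ steps, and recognize that $\E_1[N_k(n)\mid G_{\tau_n}]$ is (exactly, since the post-changepoint degree evolution of an old vertex depends on $G_{\tau_n}$ only through its degree) an affine functional $\sum_{j=m}^{k}N_j(\tau_n)\,p_{j,k}(\tau_n,n)$ whose coefficients differ from $\mathds 1\{j=k\}$ by $\bigO(n^{\gamma-1})$ via Lemma~\ref{lem:crucial}, so the limit is that of the null-model CLT at time $\tau_n$; your splitting off of $N_k(\tau_n)-\E[N_k(\tau_n)]$ before centering the correction $R_k$ is just an algebraic rearrangement of the paper's identical decomposition, and the Cram\'er--Wold step is handled the same way.
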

\begin{proof}
Let $k\geq m$ be arbitrary. We must show that the standardized version of $\left(N_m(n),\ldots,N_k(n)\right)$ is asymptotically normal with the correct covariance structure. Let us first inspect the asymptotic marginal distributions of $N_k(n)$, as this illustrates the main premise of the argument, that can be extended easily by using an application of the Cram\'{e}r-Wold device. Proceed by using the following decomposition
\begin{align}
\frac{1}{\sqrt{n}}\left(N_k(n)-\E_1[N_k(n)]\right)&=\frac{1}{\sqrt{n}}\left(\E_1\left[\left.N_k(n)\right|G_{\tau_n}\right]-\E_1\left[N_k(n)\right]\right)\\
&\qquad +\frac{1}{\sqrt{n}}\left(N_k(n)-\E_1\left[\left.N_k(n)\right|G_{\tau_n}\right]\right)\ .\label{eqn:decomposition}
\end{align}
The second term in \eqref{eqn:decomposition} can be dealt with conveniently using the Azuma-Hoeffding inequality. Define the Doob martingale, for $t\in [n]\setminus [\tau_n]$,
$$M_k(t)=\E_1\left[N_k(n)\mid G_t\right]\ .$$
As used before we know that almost surely $|M_k(t)-M_k(t-1)|\leq 2m$. Therefore, by the Azuma-Hoeffding inequality,
$$\P_1\left(|N_k(n)-\E_1\left[\left.N_k(n)\right|G_{\tau_n}\right]|\geq x \right)\leq 2\e^{-\frac{x^2}{8m^2 (n-\tau_n)}}\ .$$
Thus, the second term in \eqref{eqn:decomposition} is $\bigO_{\P_1}(\sqrt{n-\tau_n}/\sqrt{n})=\bigO_{\P_1}(n^{(\gamma-1)/2})=\smallO(1)$ when $\gamma<1$.

We now shift the focus to the first term in \eqref{eqn:decomposition}. Define
$$N_k(\tau_n,n)=\sum_{v\in[\tau_n]} \mathds 1\{D_v(n)=k\}\ .$$
In words, this is the number of vertices of degree $k$ that were added to the graph $G_n$ up to the changepoint. Note that
$$N_k(n)=N_k(\tau_n,n)+\sum_{v\in[n]\setminus[\tau_n]} \mathds 1\{D_v(n)=k\}\ .$$
Given the affine nature of the preferential attachment function (after time $\tau_n$) we know that $\sum_{v\in[n]\setminus[\tau_n]} \mathds 1\{D_v(n)=k\}$ is independent of $G_{\tau_n}$. Therefore, we can simplify the first term in \eqref{eqn:decomposition} as
\begin{align}
\frac{1}{\sqrt{n}}\left(\E_1\left[\left.N_k(n)\right|G_{\tau_n}\right]-\E_1\left[N_k(n)\right]\right) &= \frac{1}{\sqrt{n}}\left(\E_1\left[\left.N_k(\tau_n,n)\right|G_{\tau_n}\right]-\E_1\left[N_k(\tau_n,n)\right]\right)\ .
\end{align}

At this point, it is useful to write $\E_1\left[\left.N_k(\tau_n,n)\right|G_{\tau_n}\right]$ in a slightly more explicit way. Note that
\begin{align}
\E_1\left[\left.N_k(\tau_n,n)\right|G_{\tau_n}\right] &=\E_1\left[\left. \sum_{v\in[\tau_n]} \mathds 1\{D_v(n)=k\}\right|G_{\tau_n}\right]\\
&=\sum_{v\in[\tau_n]} \sum_{j=m}^k \mathds 1\{D_v(\tau_n)=j\} \underbrace{\P_1\left(\left. D_v(n)=k\right|D_v(\tau_n)=j\right)}_{\coloneqq p_{j,k}(\tau_n,n)}\\
&=\sum_{j=m}^k N_j(\tau_n) p_{j,k}(\tau_n,n)\ .
\end{align}
Using this we immediately see that
$$\E_1[N_k(\tau_n,n)]=\sum_{j=m}^k \E_1[N_j(\tau_n)] p_{j,k}(\tau_n,n)=\sum_{j=m}^k \E_0[N_j(\tau_n)] p_{j,k}(\tau_n,n)\ .$$
In conclusion
\begin{align}
\lefteqn{\frac{1}{\sqrt{n}}\left(\E_1\left[\left.N_k(n)\right|G_{\tau_n}\right]-\E_1\left[N_k(n)\right]\right)}\\
&= \frac{1}{\sqrt{n}} \sum_{j=m}^k \left(N_j(\tau_n) -\E_0[N_j(\tau_n)]\right) p_{j,k}(\tau_n,n)\\
&= \sum_{j=m}^k \frac{1}{\sqrt{\tau_n}}\left(N_j(\tau_n) -\E_0[N_j(\tau_n)]\right) \sqrt{\frac{\tau_n}{n}}p_{j,k}(\tau_n,n)\ .\label{eqn:almost_final}
\end{align}
To proceed all that is needed is to characterize $p_{j,k}(\tau_n,n)$. Lemma~\ref{lem:crucial} immediately provides the necessary result, specifically
$$p_{k,k}(\tau_n,n)=1-(1+\smallO(1))cn^{\gamma-1}m\frac{k+\delta_1}{2m+\delta_1}=1+\smallO(1)$$
and for $j<k$
$$p_{j,k}(\tau_n,n)\leq \P_1\left(\left. D_v(n)>k\right|D_v(\tau_n)=j\right)=\smallO(1)\ .$$
In conclusion, since $\sqrt{\tau_n/n}\to 1$ and $p_{k,l}(\tau_n,n)\to \mathds 1\{j=k\}$ as $n\to\infty$ we conclude that \eqref{eqn:almost_final} converges to the same normal distribution as $\frac{1}{\sqrt{n}}\left(N_k(n) -\E_0[N_k(n)]\right)$ under the null model.

Owing to the linearity of \eqref{eqn:almost_final}, the same argument also shows that any finite linear combination of the (centered and rescaled) elements of $\left(N_m(n),\ldots,N_k(n)\right)$ is asymptotically normal with the appropriate variance. An application of the Cram\'{e}r-Wold device then shows that, for any $k\geq m$, this vector converges in distribution to the desired finite-dimensional multi-variate Gaussian distribution. This concludes the proof.
\end{proof}

With this lemma at hand \eqref{eq:asymptotic_normality_known_delta_test_statistic_alternative_hypothesis} is immediate. The second result \eqref{eq:asymptotic_normality_unknown_delta_test_statistic_alternative_hypothesis} is, however, not a trivial consequence of the lemma, as the convergence in the product topology in Lemma~\ref{lem:joint_CLT} is unfortunately not sufficient to obtain the final result.

To formally show \eqref{eq:asymptotic_normality_unknown_delta_test_statistic_alternative_hypothesis}, begin by recalling that $\iota'_n(\hat\delta_n)=0$. Expanding $\delta\mapsto\iota'_n(\delta)$ around $\delta_0$ we get
\begin{align}
\hat \delta_n-\delta_0&=-\frac{\iota'_n(\delta_0)}{\iota''_n(\bar \delta_n)}\ ,
\end{align}
where $\bar \delta_n=\delta_0+\bar\zeta_n (\hat\delta_n-\delta_0)$ for some $\bar\zeta_n\in[0,1]$. Therefore, and considering also a Taylor expansion of $p_m(\hat\delta_n)$, we conclude that
\begin{align}%
Q(g_n) &= N_m(n)-np_m(\hat\delta_n)\\
&=N_m(n)-np_m(\delta_0)+n\frac{p_m'(\breve \delta_n)}{\iota''_n(\bar \delta_n)}\iota'_n(\delta_0)\ ,
\end{align}%
where $\breve \delta_n=\delta_0+\breve\zeta_n (\hat\delta_n-\delta_0)$ for some $\breve\zeta_n\in[0,1]$. Since $\hat\delta_n \xrightarrow{\P_1} \delta_0$ as $n\to\infty$, and using the uniform convergence results of $\iota''$ from \cite{Gao2017} (as already used in the proof of Proposition~\ref{prp:B}) we conclude that 
\begin{align}%
\frac{p'_m(\check\delta_n)}{{\iota''_n(\bar \delta_n)}} \xrightarrow{\P_1} \frac{p'_m(\delta_0)}{\iota''(\delta_0)}\neq 0.
\end{align}%
To aid the presentation we rewrite $Q(G_n)$ as
\begin{align}%
Q(G_n) &= N_m(n)-np_m(\delta_0)+n\frac{p'_m(\delta_0)}{\iota''(\delta_0)}\iota'_n(\delta_0)\\
&\qquad + n\left(\frac{p'_m(\breve\delta_0)}{\iota''_n(\bar\delta_0)}-\frac{p'_m(\delta_0)}{\iota''(\delta_0)}\right)\iota'_n(\delta_0)\ .\label{eq:Q_asympt_normal_second_term_rewritten}
\end{align}%
We argue that the last term is negligible, and therefore it suffices to characterize the asymptotic normality of the first two terms. Note that $N_m(n)$ and $\iota'_n(\delta_0)$ are not independent. To avoid unnecessarily cluttering the presentation we focus first on the asymptotic normality of $\iota'_n(\delta_0)$ and then argue that extending the analysis to the joint normality is straightforward.

The following result generalizes \cite[Lemma 7]{Gao2017} to the alternative model:

\begin{lemma}\label{lem:iota_prime_normality}
Recall the definition of $v$ in \eqref{v-var-def}. Under the alternative model,
\begin{align*}%
\sqrt{n}(\iota_n'(\delta_0) - \mathbb E_1[\iota_n'(\delta_0)]) \xrightarrow{D} \mathcal N(0, v(\delta_0,m))\ .
\end{align*}%
\end{lemma}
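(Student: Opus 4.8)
The plan is to use that, by \eqref{eqn:iota_n_prime} together with the telescoping identity $\sum_{k\ge m}\frac{N_{>k}(n)}{k+\delta_0}=\sum_{t=2}^{n}\sum_{i=1}^{m}\frac{1}{D_{t,i}+\delta_0}$ (each attachment event raises the degree of one old vertex from some $d\ge m$ to $d+1$ and contributes $1/(d+\delta_0)$),
\[
(n+1)\,\iota'_n(\delta_0)=\sum_{t=2}^{n}\sum_{i=1}^{m}Z_{t,i},\qquad Z_{t,i}\coloneqq\frac{1}{D_{t,i}+\delta_0}-\frac{t}{S_{t,i-1}(\delta_0)},
\]
where $|Z_{t,i}|$ is bounded by a constant $C=C(\delta_0,m)$ almost surely. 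The point is that this sum splits at the changepoint into a ``null-model'' part carrying the whole asymptotic variance and a ``post-changepoint'' part that is negligible after centering. Indeed, for $t\le\tau_n$ the attachment still uses $\delta_0$, so $\E_1[Z_{t,i}\mid G_{t,i-1}]=0$ (the usual score identity, valid under $\P_1$ because $G_{\tau_n}$ has the same law under both hypotheses) and hence $\E_1[Z_{t,i}]=0$; therefore
\[
\sqrt n\bigl(\iota'_n(\delta_0)-\E_1[\iota'_n(\delta_0)]\bigr)=\frac{\sqrt n}{n+1}\Bigl(\underbrace{\textstyle\sum_{t=2}^{\tau_n}\sum_{i=1}^{m}Z_{t,i}}_{=:\,V_n}\;+\;\underbrace{\textstyle\sum_{t>\tau_n}\sum_{i=1}^{m}\bigl(Z_{t,i}-\E_1[Z_{t,i}]\bigr)}_{=:\,R_n}\Bigr).
\]

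For $V_n$: applying the same identity at time $\tau_n$ shows $V_n=(\tau_n+1)\iota'_{\tau_n}(\delta_0)$, and since $G_{\tau_n}$ has the same distribution under $\P_0$ and $\P_1$, the random variable $\sqrt{\tau_n}\,\iota'_{\tau_n}(\delta_0)$ has under $\P_1$ the same law as under $\P_0$, which by \cite[Lemma~7]{Gao2017} (equivalently, the martingale CLT with predictable quadratic variation \eqref{eq:asymptotic_variance_Y}) converges to $\mathcal N(0,v(\delta_0,m))$. Since $\tau_n/n\to1$, Slutsky's lemma gives $\frac{\sqrt n}{n+1}V_n\xrightarrow{D}\mathcal N(0,v(\delta_0,m))$ under $\P_1$. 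It then remains to prove $\tfrac{1}{\sqrt n}R_n=\smallO_{\P_1}(1)$, after which the lemma follows from one more application of Slutsky's lemma.

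For $R_n$ I would write $Z_{t,i}-\E_1[Z_{t,i}]=\bigl(Z_{t,i}-\E_1[Z_{t,i}\mid G_{t,i-1}]\bigr)+\bigl(\E_1[Z_{t,i}\mid G_{t,i-1}]-\E_1[Z_{t,i}]\bigr)$. The sum over $t>\tau_n$ of the first pieces is a martingale with $m(n-\tau_n)=\bigO(n^\gamma)$ bounded increments, hence has $L^2$-norm $\bigO(n^{\gamma/2})=\smallO(\sqrt n)$. For the second pieces, the elementary computation $\frac{D+\delta_1}{D+\delta_0}=1+\frac{\delta_1-\delta_0}{D+\delta_0}$ together with the fact that after $\tau_n$ the attachment probabilities use $\delta_1$ gives, for $t>\tau_n$,
\[
\E_1[Z_{t,i}\mid G_{t,i-1}]=\Bigl(\tfrac{t}{S_{t,i-1}(\delta_1)}-\tfrac{t}{S_{t,i-1}(\delta_0)}\Bigr)+\frac{\delta_1-\delta_0}{S_{t,i-1}(\delta_1)}\,\Phi_{t,i-1},\qquad \Phi_{t,i-1}\coloneqq\sum_{j=0}^{t-1}\frac{1}{D_{v_j}(G_{t,i-1})+\delta_0},
\]
where the first bracket is deterministic; so after centering only the $\Phi$-term survives, and by the triangle inequality in $L^2$ together with $S_{t,i-1}(\delta_1)\asymp(2m+\delta_1)t$,
\[
\Bigl\|\textstyle\sum_{t>\tau_n}\sum_{i=1}^{m}\bigl(\E_1[Z_{t,i}\mid G_{t,i-1}]-\E_1[Z_{t,i}]\bigr)\Bigr\|_2\le C'\sum_{t>\tau_n}\frac{1}{t}\,\bigl\|\Phi_{t,i-1}-\E_1[\Phi_{t,i-1}]\bigr\|_2 .
\]
Since $\Phi_{t,i-1}$ equals $\sum_{k\ge m}(k+\delta_0)^{-1}N_k(t-1)$ up to an $\bigO(1)$ correction, I would bound $\|\Phi_{t,i-1}-\E_1[\Phi_{t,i-1}]\|_2=\bigO(\sqrt t\,\log t)$ by combining the Azuma estimate $\|N_k(s)-\E_1[N_k(s)]\|_2=\bigO(\sqrt s)$ (bounded-difference constant $2m$, exactly as in \eqref{eqn:AH_Nm}, which holds for every $k$) with the deterministic bound $N_k(s)\le 2ms/k$, summing over $k$ after splitting at $k\asymp\sqrt s$. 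Plugging this back in yields $\|R_n\|_2=\bigO(n^{\gamma/2})+\bigO\bigl(n^{\gamma-1/2}\log n\bigr)=\smallO(\sqrt n)$, using $n-\tau_n=cn^\gamma$ and $\gamma<1$, which is exactly what is needed.

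The main obstacle is the concentration estimate for the weighted degree sum $\Phi_s$ under the alternative model: one genuinely needs its fluctuations around the mean to be of order $\sqrt s$ up to logarithmic factors, since the trivial bound $\Phi_s\le 2ms/(m+\delta_0)$ is far too crude once $\gamma>\tfrac12$. Everything else is routine bookkeeping. We note that the hypothesis $\gamma>\tfrac12$ stated in the lemma is not actually used in this argument (only $\gamma<1$ is), consistent with the remark following Theorem~\ref{thm:asymptotic_normality_test_statistics}. An alternative route would be to deduce the lemma directly from Lemma~\ref{lem:joint_CLT} by writing $\iota'_n(\delta_0)-\E_1[\iota'_n(\delta_0)]$ as an (infinite) linear combination of the centered, rescaled degree counts, truncating in $k$, and controlling the tail with the same second-moment estimates; the martingale split above is somewhat more self-contained and makes the identification of the limiting variance $v(\delta_0,m)$ immediate.
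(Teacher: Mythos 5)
Your proposal is correct, and it follows the same high-level strategy as the paper --- isolate the pre-changepoint contribution, identify it (via the equality in law of $G_{\tau_n}$ under $\P_0$ and $\P_1$) with the null-model score at time $\tau_n$ so that \cite[Lemma~7]{Gao2017} delivers the limit $\mathcal N(0,v(\delta_0,m))$, and show the post-changepoint contribution is $\smallO_{\P_1}(\sqrt n)$ after centering --- but the implementation is genuinely different. The paper works at the level of degree counts: it conditions on $G_{\tau_n}$, writes $\E_1[N_{>k}(n)\mid G_{\tau_n}]$ through the transition probabilities $q_k(\tau_n,n)$, and kills the two remainder terms with Doob-martingale/Azuma arguments whose increment bounds are recycled from the double-sum estimates in Proposition~\ref{prp:ana-diff-means}. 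You instead work at the level of attachment events, using the telescoping identity $\sum_{k\ge m}N_{>k}(n)/(k+\delta_0)=\sum_{t,i}1/(D_{t,i}+\delta_0)$ to write $(n+1)\iota'_n(\delta_0)$ as a sum of score increments $Z_{t,i}$; the pre-changepoint block is then \emph{exactly} $(\tau_n+1)\iota'_{\tau_n}(\delta_0)$ (already centered, since $\E_1[Z_{t,i}\mid G_{t,i-1}]=0$ for $t\le\tau_n$), and the post-changepoint block splits into a martingale part of size $\bigO(n^{\gamma/2})$ and a compensator part driven by $\Phi_{t,i-1}-\E_1[\Phi_{t,i-1}]$. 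The price of your route is the extra concentration estimate $\|\Phi_s-\E_1[\Phi_s]\|_2=\bigO(\sqrt s\,\log s)$, but your truncation at $k\asymp\sqrt s$ combined with the uniform-in-$k$ Azuma bound and $N_k(s)\le 2ms/k$ does establish it (one can even drop the logarithm by applying Azuma directly to the Doob martingale of $\Phi_s$, since each edge addition changes $\Phi_s$ by $\bigO(1)$); the payoff is a more self-contained argument that does not lean on Proposition~\ref{prp:ana-diff-means}, makes the limiting variance immediate, and transparently needs only $\gamma<1$. Two cosmetic points: the right-hand side of your $L^2$ triangle-inequality display should carry a $\sum_{i=1}^m$ (or a factor $m$), and it is worth stating explicitly that $\E_1[Z_{t,i}]=\E_1\bigl[\E_1[Z_{t,i}\mid G_{t,i-1}]\bigr]$ is what cancels the deterministic bracket $t/S_{t,i-1}(\delta_1)-t/S_{t,i-1}(\delta_0)$ in the compensator.
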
%
\begin{proof}%
We proceed similarly as in the proof of Lemma \ref{lem:joint_CLT} by isolating the contributions of the vertices that join after the changepoint as follows
\begin{align}%
\sqrt{n}(\iota_n'(\delta_0) - \E_1[\iota_n'(\delta_0)]) &= \frac{\sqrt{n}}{n+1}\sum_{k\geq m}\frac1{k+\delta_0}(N_{>k}(n) - \E_1[N_{>k}(n)\vert G_{\tau_n}]) \\
&\quad+ \frac{\sqrt{n}}{n+1}\sum_{k\geq m}\frac1{k+\delta_0}(\E_1[ N_{>k}(n)\vert G_{\tau_n}] - \E_1[ N_{>k}(n) ])\ .\label{eq:iota_prime_asymptotically_normal_decomposition}
\end{align}%
Following the exact same argument as in the proof of Lemma~\ref{lem:AZ_delta} the first term on the right-hand side is sufficiently small since
\begin{align}%
\P_1(n\vert \iota'_n(\delta_0) - \E_1[\iota'_n(\delta_0)\vert G_{\tau_n}]\vert \geq x) \leq 2 \exp\left(\frac{x^2}{2(n-\tau_n)c^2_{n,m}}\right)\ ,
\end{align}%
where $c_{n,m} = 2m\log(n)(1+o(1))$, and thus 
\begin{align}%
\sqrt{n}(\iota'_n(\delta_0) - \E_1[\iota'_n(\delta_0)\vert G_{\tau_n}]) = \bigO_{\P}(n^{\gamma-1}\log(n)) = \smallO_{\P_1}(1)\ .
\end{align}%
For the following term, it is convenient to introduce a convenient notation, similarly as done in the proof of Lemma~\ref{lem:joint_CLT}:
\begin{align}%
N_{>k}(\tau_n,n) := \sum_{v\in[\tau_n]}\mathds 1\{D_v(n)>k\}\ .
\end{align}%
Note that
\begin{align}%
\E_1[N_{>k}(n)\vert G_{\tau_n}] - \E_1[N_{>k}(n)] &= \E_1[N_{>k}(\tau_n, n)\vert G_{\tau_n}] - \E_1[N_{>k}(\tau_n, n)]\ ,
\end{align}%
and also
\begin{align}
\E_1[N_{>k}(n)\vert G_{\tau_n}] &= N_{>k}(\tau_n)+\sum_{v\in[\tau_n]} \mathds 1\{D_v(\tau_n)\leq k\}\underbrace{\P_1(D_v(n)>k\vert D_v(\tau_n)\leq k)}_{\coloneqq q_{k}(\tau_n,n)}\ .
\end{align}
With this in hand, the second term on the right-hand side of \eqref{eq:iota_prime_asymptotically_normal_decomposition} is
\begin{align}%
&\frac{\sqrt{n}}{n+1}\sum_{k\geq m}\frac1{k+\delta_0}\left(\E_1[ N_{>k}(\tau_n, n)\vert G_{\tau_n}] - \E_1[ N_{>k}(\tau_n, n) ]\right) \\
&= \frac{\sqrt{n\tau_n}}{n+1}\frac1{\sqrt{\tau_n}}\sum_{k\geq m}\frac1{k+\delta_0}\left( N_{>k}(\tau_n) - \E_1[ N_{>k}(\tau_n) ]\right)\\
&\qquad+\frac{\sqrt{n}}{n+1}\sum_{k\geq m}\frac{q_{k}(\tau_n,n)}{k+\delta_0}\sum_{v\in[\tau_n]} \left(\mathds 1\{D_v(\tau_n)\leq k\} - \P_1(D_v(\tau_n)\leq k)\right)\\
&= \frac{\sqrt{n\tau_n}}{n+1}\frac1{\sqrt{\tau_n}}\sum_{k\geq m}\frac1{k+\delta_0}\left( N_{>k}(\tau_n) - \E_0[ N_{>k}(\tau_n) ]\right) \label{eq:iota_prime_asymptotically_normal_decomposition_main}\\
&\qquad-\frac{\sqrt{n}}{n+1}\sum_{k\geq m}\frac{q_{k}(\tau_n,n)}{k+\delta_0}\sum_{v\in[\tau_n]} \left(\mathds 1\{D_v(\tau_n)> k\} - \P_0(D_v(\tau_n)> k)\right)\ .\label{eq:iota_prime_asymptotically_normal_decomposition_extra}
\end{align}%
The term \eqref{eq:iota_prime_asymptotically_normal_decomposition_main} converges in law to $\mathcal N(0,v(\delta_0,m))$ by \cite[Lemma 7]{Gao2017}. To finalize our argument, we are left to prove that the term \eqref{eq:iota_prime_asymptotically_normal_decomposition_extra} is negligible. We do this by rewriting it as a Doob martingale difference, similarly as was done earlier in the proof. Let
\begin{align}%
M_t \coloneqq \frac{\sqrt{n}}{n+1}\E_1\left[\left.\sum_{k\geq m}\frac{q_{k}(\tau_n,n)}{k+\delta_0} N_{>k}(\tau_n)\right| G_t\right]\ .
\end{align}%
With this notation \eqref{eq:iota_prime_asymptotically_normal_decomposition_extra} is precisely $M_{\tau_n}-M_1$. Furthermore, the martingale differences are bounded by
\begin{align}%
\left| M_t-M_{t-1}\right| &\leq \frac{\sqrt{n}}{n+1}\sum_{k\geq m}\frac{2m}{k+\delta_0}q_k(\tau_n,n)\\
&= \frac{2m\sqrt{n}}{(n+1)\tau_n}\sum_{k\geq m}\sum_{v\in[\tau_n]}\frac{1}{k+\delta_0}\frac{\P_1(D_v(n)>k,D_v(\tau_n)\leq k)}{\P_1(D_v(\tau_n)\leq k)}\\
&\leq \frac{2m\sqrt{n}}{(n+1)\tau_n}\sum_{k\geq m}\sum_{v\in[\tau_n]}\frac{\P_1(D_v(n)>k,D_v(\tau_n)\leq k)}{k+\delta_0}\frac{1}{\P_0(D_v(\tau_n)\leq m)}\\
&\leq \frac{2mn^{-3/2}}{p_m(\delta_0)}(1+\smallO(1))\sum_{k\geq m}\sum_{v\in[\tau_n]}\frac{\P_1(D_v(n)>k,D_v(\tau_n)\leq k)}{k+\delta_0}\ .
\end{align}%
The double-summation is controlled in the proof of Proposition~\ref{prp:ana-diff-means}, and it is the sum of the terms in expressions \eqref{eqn:series1} and \eqref{eqn:series2}. In conclusion
\begin{align}%
\left| M_t-M_{t-1}\right| &\leq \underbrace{\frac{2m^2 c}{(2m+\delta_0)p_m(\delta_0)}}_{\coloneqq \text{const}(m,\delta_0,c)}n^{\gamma-3/2}(1+\smallO(1))\ .
\end{align}%
Based on this and using the Azuma-Hoeffding inequality we conclude that, for any $x\geq 0$
\begin{align}
\P\left(|M_{\tau_n}-M_1|\geq x\right) &\leq 2\exp\left\{-\frac{x^2}{(1+\smallO(1))\tau_n \left(\text{const}(m,\delta_0,c)n^{\gamma-3/2}\right)^2}\right\}\\
&\leq 2\exp\left\{-\frac{x^2}{(1+\smallO(1))\text{const}^2(m,\delta_0,c)n^{2(\gamma-1)}}\right\}\ .
\end{align}
This implies that $M_{\tau_n}-M_1=\bigO_{\P_1}(n^{\gamma-1})=\smallO_{\P_1}(1)$, showing that \eqref{eq:iota_prime_asymptotically_normal_decomposition_extra} is negligible, and concluding the proof of the lemma.
\end{proof}%
Lemma~\ref{lem:iota_prime_normality} suffices to show that the last term in \eqref{eq:Q_asympt_normal_second_term_rewritten} gives a negligible contribution to the limit distribution of $(Q(G_n)-\E_1[Q(G_n)])/\sqrt{n}$, since
\begin{align}
\sqrt{n}\left(\frac{p'_m(\breve\delta_0)}{\iota''_n(\bar\delta_0)}-\frac{p'_m(\delta_0)}{\iota''(\delta_0)}\right)\iota'_n(\delta_0)-\E_1\left[\sqrt{n}\left(\frac{p'_m(\breve\delta_0)}{\iota''_n(\bar\delta_0)}-\frac{p'_m(\delta_0)}{\iota''(\delta_0)}\right)\iota'_n(\delta_0)\right]\\
=\smallO_{\P_1}(1)\bigO_{\P_1}(1)=\smallO_{\P_1}(1)\ .
\end{align}

In conclusion
$$\frac{Q(G_n)-\E_1[G_n]}{\sqrt{n}}=\frac{N_m(n)-\E_1[N_m(n)]}{\sqrt{n}}+\sqrt{n}\frac{p'_m(\delta_0)}{\iota''(\delta_0)}\left(\iota'_n(\delta_0)-\E_1[\iota'_n(\delta_0)]\right)+\smallO_{\P_1}(1)\ .$$
Since $\iota'_n(\delta_0)$ and $N_m(n)$ are not independent we cannot directly rely on Lemma~\ref{lem:iota_prime_normality} and \ref{lem:joint_CLT} to obtain the final result. However, using exactly the same type of argument leads to the following sequence of statements:
\begin{align}
\lefteqn{\frac{Q(G_n)-\E_1[Q(G_n)]}{\sqrt{n}}}\\
&= \smallO_{\P_1}(1)+\frac{\E_1\left[\left.N_m(n)\right|G_{\tau_n}\right]-\E_1\left[N_m(n)\right]}{\sqrt{n}}\\
&\qquad+\frac{\sqrt{n}}{n+1}\sum_{k\geq m} \frac{1}{k+\delta_0}\left(\E_1\left[\left.N_{>k}(n)\right|G_{\tau_n}\right]-\E_1\left[N_{>k}(n)\right]\right)\\
&= \smallO_{\P_1}(1)+\frac{N_m(\tau_n)-\E_0\left[N_m(\tau_n)\right]}{\sqrt{n}}\\
&\qquad+\frac{\sqrt{n}}{n+1}\sum_{k\geq m} \frac{1}{k+\delta_0}\left(N_{>k}(\tau_n)-\E_0\left[N_{>k}(\tau_n)\right]\right)\\
&\xrightarrow{D} \mathcal N(0, w(\delta_0,m)+u(\delta_0,m)),
\end{align}
where the last statement follows from the joint convergence, after appropriate rescaling, of $N_m(\tau_n)$ and $\iota'_{\tau_n}$, which is guaranteed by \eqref{eq:asymptotic_normality_proof_intermediate_result} (cf.~the definition of $\iota'_{n}$ in \eqref{eqn:iota_n_prime}).

\begin{appendix}
\section{Proof of auxiliary results}

\subsection{Proof of Proposition~\ref{prp:ana-diff-means}}
Recall that $\delta\in[\delta_{\text{min}},\delta_{\text{max}}]$. Begin by noting that
\begin{equation}
(n+1)\left(\E_1[\iota'_n(\delta)]-\E_0[\iota'_n(\delta)]\right) = \sum_{k\geq m} \frac{\E_1[N_{>k}(n)]-\E_0[N_{>k}(n)]}{k+\delta}\ .
\end{equation}
The numerator in the above summand can be decomposed in a similar manner as used for the proof of Proposition~\ref{prp:A}:
\begin{align}
\label{eqn:differences}
\lefteqn{\E_1[N_{>k}(n)]-\E_0[N_{>k}(n)]}\\
&= \E_1[N_{>k}(n)]-\E_1[N_{>k}(\tau_n)]+\E_0[N_{>k}(\tau_n)]-\E_0[N_{>k}(n)]\\
&= \left(\E_1[N_{>k}(n)]-\E_1[N_{>k}(\tau_n)]\right)-\left(\E_0[N_{>k}(n)]-\E_0[N_{>k}(\tau_n)]\right)\ .
\end{align}
The first equality holds as the law of $G_{\tau_n}$ is the same under the null and alternative models. Therefore,
\begin{align}
\label{eqn:main_decomposition}
(n+1)\left(\E_1[\iota'_n(\delta)]-\E_0[\iota'_n(\delta)]\right) &= \sum_{k\geq m} \frac{\E_1[N_{>k}(n)]-\E_1[N_{>k}(\tau_n)]}{k+\delta}\\
&\qquad - \sum_{k\geq m} \frac{\E_0[N_{>k}(n)]-\E_0[N_{>k}(\tau_n)]}{k+\delta}\ .
\end{align}
The treatment of the two terms is entirely analogous and it is done simultaneously.

For the rest of the proof, let $\ell\in\{0,1\}$. Clearly
\begin{align*}
\E_\ell[N_{>k}(n)]-\E_\ell[N_{>k}(\tau_n)] &= \sum_{v\in[\tau_n]} \P_\ell(D_v(n)>k)-\P_\ell(D_v(\tau_n)>k)\\
&\qquad + \sum_{v\in[n]\setminus [\tau_n]} \P_\ell(D_v(n)>k)\ .
\end{align*}
Like in the proof of Proposition~\ref{prp:A} we distinguish the behavior of ``old'' vertices (that arrived before the change-point) from the remaining vertices (the ``new'' vertices). The contribution of the latter plays an insignificant role, as we see next. Note that, since $k\geq m$ and for $v\in [n]\setminus [\tau_n]$, the event $D_v(n)>k$ is only possible when there is a vertex $v'>v$ that attached to $v$. Referring to \eqref{eq:attachment_function_alt} we see that the probability of this happening is at most $(m+\delta_\ell)/((2m+\delta_\ell)\tau_n-2m)$. Since there are at most $m(n-\tau_n)$ possible edges that could attach we get the simple bound
$$\sum_{v\in[n]\setminus [\tau_n]} \P_\ell(D_v(n)>k) \leq m(n-\tau_n)^2 \frac{m+\delta_\ell}{(2m+\delta_\ell)\tau_n-2m}=\bigO(n^{2\gamma-1})\ .$$
Using that result and the fact that the largest degree in $G_n$ is at most $nm$, this implies that
    \[
    \sum_{k=m}^{nm} \sum_{v\in[n]\setminus [\tau_n]} \frac{\P_\ell(D_v(n)>k)}{k+\delta}\leq \bigO(n^{2\gamma-1})\sum_{k=m}^{nm}\frac{1}{k+\delta}
    \leq \ \bigO(n^{2\gamma-1}\log n) \ .
    \]
Therefore,
\begin{align}
\lefteqn{\sum_{k\geq m} \frac{\E_\ell[N_{>k}(n)]-\E_\ell[N_{>k}(\tau_n)]}{k+\delta}}\\
&= \sum_{k\geq m} \sum_{v\in[\tau_n]} \frac{\P_\ell(D_v(n)>k)-\P_\ell(D_v(\tau_n)>k)}{k+\delta} \ + \ \bigO(n^{2\gamma-1}\log n)\\
&= \sum_{k\geq m} \sum_{v\in[\tau_n]} \frac{\P_\ell(D_v(\tau_n)\leq k,D_v(n)>k)}{k+\delta} \ + \ \smallO(n^\gamma)\\
&= \sum_{k\geq m} \sum_{v\in[\tau_n]} \frac{\P_\ell(D_v(\tau_n)=k,D_v(n)>k)}{k+\delta}\label{eqn:main_term}\\
&\qquad + \sum_{k> m} \sum_{v\in[\tau_n]} \frac{\P_\ell(D_v(\tau_n)<k,D_v(n)>k)}{k+\delta} \ + \ \smallO(n^\gamma)\label{eqn:extras}\ .
\end{align}

The bulk of the analysis is therefore the characterization of the double summations above. This is somewhat delicate, and requires a good understanding of the behavior of $\P_\ell(D_v(\tau_n)\leq k,D_v(n)>k)$ for $v\in[\tau_n]$ and $k\geq m$. For an arbitrary vertex $v\in[\tau_n]$ and ``small'' $k$ we know that, most likely, the degree of the vertex will not change in $G_n$. Most of the contribution in the above expression will therefore be due to the attachment of a single edge to $v$. This reasoning does not apply when $k$ is ``large'', but in that case the denominator $k+\delta$ is large enough to make the contribution to the above summations negligible.

Note that up to time $\tau_n$ both null model and alternative models coincide. Therefore $\frac1{\tau_n}\sum_{v\in[\tau_n]}\P_\ell(D_v(\tau)=k)= p_k(\delta_0)(1+\smallO(1))$, regardless of the value of $\ell$. To characterize the term \eqref{eqn:main_term} we proceed by truncating that series and using Lemma~\ref{lem:crucial}. Define an auxiliary series $b_n=\lceil n^{1-\gamma}/\log n\rceil$. This is a divergent sequence of integers such that $b_n=\smallO(n^{1-\gamma})$. Then
\begin{align}
\lefteqn{\sum_{k\geq m} \sum_{v\in[\tau_n]} \frac{\P_\ell(D_v(\tau_n)=k,D_v(n)>k)}{k+\delta}}\\
&= \sum_{k\geq m} \sum_{v\in[\tau_n]} \frac{\P_\ell(D_v(n)-D_v(\tau_n)>0\mid D_n(\tau_n)=k)\P_\ell(D_v(\tau_n)=k)}{k+\delta}\\
&= \sum_{k=m}^{b_n} \sum_{v\in[\tau_n]} (1+\smallO(1))cn^{\gamma-1} m \frac{k+\delta_\ell}{2m+\delta_\ell}\frac{\P_\ell(D_v(\tau_n)=k)}{k+\delta}\\
&\qquad + \sum_{k=b_n+1}^\infty \sum_{v\in[\tau_n]} \frac{\P_\ell(D_v(n)-D_v(\tau_n)>0\mid D_n(\tau_n)=k)\P_\ell(D_v(\tau_n)=k)}{k+\delta}\\
&= (1+\smallO(1))cn^{\gamma-1} \frac{m}{2m+\delta_\ell} \tau_n \sum_{k=m}^{b_n} \frac{k+\delta_\ell}{k+\delta}p_k(\delta_0)\label{eqn:series1}\\
&\qquad + \sum_{k=b_n+1}^\infty \sum_{v\in[\tau_n]} \frac{\P_\ell(D_v(n)-D_v(\tau_n)>0\mid D_n(\tau_n)=k)\P_\ell(D_v(\tau_n)=k)}{k+\delta}\label{eqn:series2}\ .
\end{align}

The series in \eqref{eqn:series1} is convergent. For the series in \eqref{eqn:series2}, note that
\begin{align}
\lefteqn{\sum_{k=b_n+1}^\infty \sum_{v\in[\tau_n]} \frac{\P_\ell(D_v(n)-D_v(\tau_n)>0 \mid D_n(\tau_n)=k)\P_\ell(D_v(\tau_n)=k)}{k+\delta}}\\
&\leq \sum_{k=b_n+1}^\infty \sum_{v\in[\tau_n]} \frac{k^2}{b_n^2}\frac{\P_\ell(D_v(\tau_n)=k)}{k+\delta}\\
&= \frac{1}{b_n^2}\sum_{k=b_n+1}^\infty \sum_{v\in[\tau_n]} \E_\ell\left(\frac{D^2_v(\tau_n)}{D_v(\tau_n)+\delta}\mathds 1\{D_v(\tau_n)=k\}\right)\\
&\leq \frac{1}{b_n^2}\sum_{k=1}^\infty \sum_{v\in[\tau_n]} \E_\ell\left(\underbrace{\frac{D_v(\tau_n)}{D_v(\tau_n)+\delta}}_{\leq C_m}D_v(\tau_n)\mathds 1\{D_v(\tau_n)=k\}\right)\\
&=\frac{C_m}{b_n^2} \E_\ell\left(\sum_{v\in[\tau_n]} D_v(\tau_n)\right)\\
&=\frac{C_m}{b_n^2} 2m\tau_n=\bigO(n^{2\gamma-1}\log n)=\smallO(n^{\gamma})\ ,
\end{align}
where $C_m:=\frac{m}{m+\delta_{\min}}$. In conclusion,
\begin{align}
\lefteqn{\sum_{k\geq m} \sum_{v\in[\tau_n]} \frac{\P_\ell(D_v(\tau_n)=k,D_v(n)>k)}{k+\delta}}\\
&= (1+\smallO(1))cn^{\gamma-1} \frac{m}{2m+\delta_\ell} \tau_n \sum_{k\geq m} \frac{k+\delta_\ell}{k+\delta}p_k(\delta_0)\ .\label{eq:D_tau_n_equals_k_D_n_greater_k_asymptotic}
\end{align}


The characterization of \eqref{eqn:extras} is significantly more delicate. Note that
\begin{align}
\lefteqn{\sum_{k>m} \sum_{v\in[\tau_n]} \frac{\P_\ell(D_v(\tau_n)<k,D_v(n)>k)}{k+\delta}}\\
&=\sum_{v\in[\tau_n]} \E_\ell\left[\sum_{k>m} \frac{1}{k+\delta}\mathds 1\{D_v(\tau_n)<k,D_v(n)>k\}\right]\\
&\leq\sum_{v\in[\tau_n]} \E_\ell\left[\sum_{k>m} \frac{1}{D_v(\tau_n)+1+\delta}\mathds 1\{D_v(\tau_n)<k,D_v(n)>k\}\right]\\
&=\sum_{v\in[\tau_n]} \E_\ell\left[\frac{1}{D_v(\tau_n)+1+\delta}\mathds 1\{D_v(n)-D_v(\tau_n)\geq 2\}\sum_{k=D_v(\tau_n)+1}^{D_v(n)-1} 1\right]\\
&=\sum_{v\in[\tau_n]} \E_\ell\left[\mathds 1\{D_v(n)-D_v(\tau_n)\geq 2\}\frac{D_v(n)-D_v(\tau_n)-1}{D_v(\tau_n)+1+\delta}\right]\\
&=\sum_{v\in[\tau_n]} \E_\ell\left[\mathds 1\{D_v(n)-D_v(\tau_n)\geq 1\}\frac{D_v(n)-D_v(\tau_n)-1}{D_v(\tau_n)+1+\delta}\right]\ .\label{eqn:key_formula}
\end{align}
The key quantity to control is the expectation in \eqref{eqn:key_formula}. Let $\sigma_v$ denote the first time after $\tau_n$ when an edge is attached to vertex $v$. With this in hand we can bound \eqref{eqn:key_formula} as
\begin{align}
\lefteqn{\sum_{k>m} \sum_{v\in[\tau_n]} \frac{\P_\ell(D_v(\tau_n)<k,D_v(n)>k)}{k+\delta}}\\
&\leq \sum_{v\in[\tau_n]} \sum_{s\in[n]\setminus[\tau_n]} \E_\ell\left[\mathds 1\{\sigma_v=s\}\frac{D_v(n)-D_v(s)}{D_v(\tau_n)+1+\delta}\right]\label{eqn:A}\ .
\end{align}
The following lemma allows us to bound \eqref{eqn:A}:
\begin{lemma}\label{lem:recursion_expected_degrees}
For $\ell\in \{0,1\}$ and $t\geq \tau>\tau_n$
    \begin{align}
    &\E_\ell\left[D_v(t)+\delta_\ell \mid D_v(\tau)\right]\\
    &\qquad=(D_v(\tau)+\delta_\ell)\prod_{j=1}^{t-\tau}\prod_{i=1}^m \left(1+\frac{1}{2(\tau+j-1)) m + \delta_\ell (\tau+j)+(i-1)}\right)\ .
    \end{align}
    \end{lemma}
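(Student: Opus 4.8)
The plan is a direct induction on $t$ (equivalently, on the number of edges inserted after time $\tau$), exploiting the one-step recursion satisfied by $D_v(\cdot)+\delta_\ell$ under the affine attachment rule \eqref{eq:attachment_function_alt}. Since $t>\tau>\tau_n$, every edge inserted between time $\tau$ and time $t$ uses the post-changepoint rule. First I would record the base case $t=\tau$, which is immediate because the double product is empty (and thus equals $1$), so the identity reads $\E_\ell[D_v(\tau)+\delta_\ell\mid D_v(\tau)]=D_v(\tau)+\delta_\ell$.

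For the inductive step, consider the $m$ intermediate sub-steps at a time $t'=\tau+j$ with $\tau< t'\le t$. Conditionally on $G_{t',i-1}$, the $i$-th edge at time $t'$ attaches to $v$ with probability $(D_v(G_{t',i-1})+\delta_\ell)/S_{t',i-1}(\delta_\ell)$, where $S_{t',i-1}(\delta_\ell)=2(t'-1)m+\delta_\ell t'+(i-1)$ as in \eqref{eq:attachment_function_alt}, and otherwise $D_v$ is unchanged. Hence
\begin{equation}
\E_\ell\!\left[D_v(G_{t',i})+\delta_\ell \;\middle|\; G_{t',i-1}\right]
= \left(D_v(G_{t',i-1})+\delta_\ell\right)\left(1+\frac{1}{S_{t',i-1}(\delta_\ell)}\right)\ .
\end{equation}
Iterating this identity via the tower property over the $m$ sub-steps at time $t'$ and over all times $t'=\tau+1,\dots,t$, and noting that each factor $1+1/S_{t',i-1}(\delta_\ell)$ is deterministic, gives
\begin{equation}
\E_\ell\!\left[D_v(t)+\delta_\ell \;\middle|\; G_\tau\right]
= \left(D_v(\tau)+\delta_\ell\right)\prod_{t'=\tau+1}^{t}\prod_{i=1}^m\left(1+\frac{1}{S_{t',i-1}(\delta_\ell)}\right)\ .
\end{equation}
The right-hand side is a deterministic multiple of $D_v(\tau)+\delta_\ell$, hence $\sigma(D_v(\tau))$-measurable, so conditioning further on $D_v(\tau)$ leaves it unchanged; this yields $\E_\ell[D_v(t)+\delta_\ell\mid D_v(\tau)]$. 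Re-indexing $t'=\tau+j$ for $j=1,\dots,t-\tau$, with $S_{\tau+j,i-1}(\delta_\ell)=2(\tau+j-1)m+\delta_\ell(\tau+j)+(i-1)$, produces exactly the claimed product.

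Equivalently, one may package the argument as a martingale statement: the one-step recursion shows that $(D_v(G_{t',i})+\delta_\ell)$ divided by the running product of the factors $1+1/S_{t',i-1}(\delta_\ell)$ is a martingale in the natural filtration of the attachment process, so its conditional expectation given $G_\tau$ equals its value at time $\tau$, which is $D_v(\tau)+\delta_\ell$. There is no genuine obstacle here; the only point requiring care is the bookkeeping of the double index $(t',i)$ and the observation that each of the $m$ sub-steps at time $t'$ contributes its own factor with denominator $S_{t',i-1}(\delta_\ell)$, even though only a single edge is inserted per sub-step.
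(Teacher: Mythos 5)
Your proposal is correct and follows essentially the same route as the paper: the one-step identity $\E_\ell[D_v(G_{t',i})+\delta_\ell\mid G_{t',i-1}]=(D_v(G_{t',i-1})+\delta_\ell)(1+1/S_{t',i-1}(\delta_\ell))$, iterated via the tower property over the $m$ sub-steps and then over the times $\tau+1,\dots,t$, is exactly the paper's argument. The martingale packaging is a cosmetic variant of the same computation, so nothing further is needed.
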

We postpone the proof of Lemma \ref{lem:recursion_expected_degrees} to later on. With this in hand, for any $s\in[n]\setminus[\tau_n]$,
\begin{align}
\lefteqn{\E_\ell\left[D_v(n)-D_v(s) \mid D_v(s)\right]}\\
&= \E_\ell\left[D_v(n)+\delta_\ell\mid D_v(s)\right]-(D_v(s)+\delta_\ell)\\
&=(D_v(s)+\delta_\ell)\left(\prod_{j\in[n]\setminus[\tau_n]} \prod_{i=1}^m \left(1+\frac{1}{j(2m+\delta_\ell)-2m+i-1}\right)\ -1\right)\\
&=(1+\smallO(1))(D_v(s)+\delta_\ell)(n-\tau_n)\frac{m}{(2m+\delta_\ell)\tau_n}\\
&\leq \bigO(1)(D_v(s)+\delta_\ell) n^{\gamma-1}\ .
\end{align}
When $s=\sigma_v$ we know that $D_v(s)\leq D_n(\tau_n)+m$ (at time $s$ the first edge was attached to $v$, and therefore at most $m$ edges were attached to $v$ after all the intermediate steps). That means that
\begin{align}
\frac{D_v(s)+\delta_\ell}{D_v(\tau_n)+1+\delta} &\leq \frac{D_v(\tau_n)+m+\delta_\ell}{D_v(\tau_n)+1+\delta}\\
&\leq \frac{D_v(\tau_n)+m+\delta_{\max}}{D_v(\tau_n)+1+\delta_{\min}}\\
&\leq \frac{2m+\delta_{\max}}{m+1+\delta_{\min}} :=\text{const}\ ,
\end{align}
where $\text{const}>0$ is simply a constant. Therefore,
\begin{align}
\lefteqn{\sum_{v\in[\tau_n]} \sum_{s\in[n]\setminus[\tau_n]} \E_\ell\left[\mathds 1\{\sigma_v=s\}\frac{D_v(n)-D_v(s)}{D_v(\tau_n)+1+\delta}\right]}\\
&= \sum_{v\in[\tau_n]} \sum_{s\in[n]\setminus[\tau_n]} \E_\ell\left[ \frac{1}{D_v(\tau_n)+1+\delta} \E_\ell\left[\mathds 1\{\sigma_v=s\}(D_v(n)-D_v(s))\mid D_v(s)\right]\right]\\
&=  \bigO(1)\sum_{v\in[\tau_n]} \sum_{s\in[n]\setminus[\tau_n]} \E_\ell\left[ \frac{1}{D_v(\tau_n)+1+\delta} \E_\ell\left.\left[\mathds 1\{\sigma_v=s\}(D_v(s)+\delta_\ell) n^{\gamma-1}\right|D_v(s)\right]\right]\\
&\leq \bigO(n^{\gamma-1})
\sum_{v\in[\tau_n]} \sum_{s\in[n]\setminus[\tau_n]} \P_\ell\left(\sigma_v=s\right)\\
&= \bigO(n^{\gamma-1}) \sum_{v\in[\tau_n]} \P_\ell\left(D_v(n)-D_v(\tau_n)\geq 1\right)\\
&\leq \bigO(n^{\gamma-1}) \sum_{v\in[\tau_n]} (n-\tau_n)m\E_\ell\left[\frac{D_v(\tau_n)+\delta_\ell}{(2m+\delta_\ell)\tau_n+\delta_\ell}\right]\\
&= \bigO(n^{2\gamma-2}) \E_\ell\left[\sum_{v\in[\tau_n]} D_v(\tau_n)+\delta_\ell\right]= \bigO(n^{2\gamma-1})=\smallO(n^{\gamma})\ ,
\end{align}
where the last inequality follows from the same reasoning used to obtain \eqref{eq:D_tau_n_equals_k_D_n_greater_k_asymptotic}, and the last step follows since $\gamma>\tfrac{1}{2}$. This means that the term in \eqref{eqn:extras} is of smaller order than the term in \eqref{eqn:main_term}. In conclusion,
\begin{align}
\sum_{k\geq m} \frac{\E_\ell[N_{>k}(n)]-\E_\ell[N_{>k}(\tau_n)]}{k+\delta}=(1+\smallO(1))cn^{\gamma} \frac{m}{2m+\delta_\ell} \sum_{k=m}^{\infty} \frac{k+\delta_\ell}{k+\delta}p_k(\delta_0)\ .
\end{align}

We are now ready to go back to \eqref{eqn:main_decomposition} to get 
\begin{align}
\lefteqn{(n+1)\left(\E_1[\iota'_n(\delta)]-\E_0[\iota'_n(\delta)]\right)}\\
&=\sum_{k\geq m} \frac{\E_1[N_{>k}(n)]-\E_1[N_{>k}(\tau_n)]}{k+\delta} - \sum_{k\geq m} \frac{\E_0[N_{>k}(n)]-\E_0[N_{>k}(\tau_n)]}{k+\delta}\\
&=(1+\smallO(1))\frac{cmn^\gamma}{(2m+\delta_0)(2m+\delta_1)}(\delta_1-\delta_0)\sum_{k\geq m}\frac{2m-k}{k+\delta}p_k(\delta_0)\\
&=(1+\smallO(1))\frac{cmn^\gamma}{(2m+\delta_0)(2m+\delta_1)}(\delta_1-\delta_0)\left(-1+\sum_{k\geq m}\frac{2m+\delta}{k+\delta}p_k(\delta_0)\right)\ ,
\end{align}
as required and where in the last step we used the fact that $\sum_{k\geq m} p_k(\delta_0)=1$.
\qed

\subsection{Proof of Lemma~\ref{lem:recursion_expected_degrees}}

Note that between $t$ and $\tau$ the attachment function is affine with parameter $\delta_\ell$. Note also that the graph $G_t$ has precisely $t+1$ vertices and $mt$ edges. Let us describe what happens at each one of the intermediate steps. Let $v$ be a vertex in $G_{\tau}$ and let $D_v(\tau+1,i)$ denote its degree in the graph $G_{\tau+1,i}$, where $i\in\{1,\ldots,m\}$. Then,
\begin{align*}
\lefteqn{\E_\ell[D_v(\tau+1,i)+\delta_0\mid D_v(\tau+1,i-1)]}\\
&= D_v(\tau+1,i-1)+\delta_0+\E_\ell[D_v(\tau+1,i)-D_v(\tau+1,i-1)\mid D_v(\tau+1,i-1)]\\
&= D_v(\tau+1,i-1)+\delta_0+\frac{D_v(\tau+1,i-1)+\delta_0}{2\tau m + \delta_0(\tau+1)+(i-1)}\\
&= (D_v(\tau+1,i-1)+\delta_0)\left(1+\frac{1}{2\tau m + \delta_0(\tau+1)+(i-1)}\right)\ .
\end{align*}
Therefore,
\begin{align*}
\E_{\ell}[D_v(\tau+1)+\delta_0\mid D_v(\tau)] &= \E_\ell[D_v(\tau+1,m)+\delta_0\mid D_v(\tau+1,0)]\\
&=(D_v(\tau)+\delta_0) \prod_{i=1}^m \left(1+\frac{1}{2\tau m + \delta_0(\tau+1)+(i-1)}\right)\ .
\end{align*}
Thus, in general, for $t>\tau$,
\begin{align*}
\lefteqn{\E_\ell[D_v(t)+\delta_0\mid D_v(\tau)]}\\
&= \E_\ell\left[\E_{\ell}[D_v(t)+\delta_0\mid D_v(t-1)] \mid D_v(\tau)\right]\\
&= \E_\ell\left[\E_{\ell}[D_v(t)+\delta_0\mid D_v(t-1)] \mid D_v(\tau)\right]\\
&=\prod_{i=1}^m \left(1+\frac{1}{2(t-1) m + \delta_0 t+(i-1)}\right)\E_\ell\left[D_v(t-1)+\delta_0 \mid D_v(\tau)\right]\\
&\vdots\\
&=(D_v(\tau)+\delta_0)\prod_{j=1}^{t-\tau}\prod_{i=1}^m \left(1+\frac{1}{2(\tau+j-1)) m + \delta_0 (\tau+j)+(i-1)}\right)\ .
\end{align*}
\qed

\subsection{Proof of Proposition~\ref{prp:C2}}

To show consistency of $\hat\delta_n$ note that $\iota'_n(\hat\delta_n)=0$ by definition. Recalling \eqref{eqn:L1_convergence} we conclude immediately that
$$\E[\iota'(\hat\delta_n)]\to 0\ .$$
\cite[Lemma~4]{Gao2017} shows that $\iota'$ has a unique zero at $\delta_0$, and $\iota'(\delta)>0$ for $\delta<\delta_0$ and $\iota'(\delta)<0$ for $\delta>\delta_0$. This immediately implies that $\E_1[|\hat\delta_n-\delta_0|]$ as $n\to\infty$, proving the first assertion in the proposition.

Note also that in Proposition~\ref{prp:B} we have shown that $\tilde\delta_n\to\delta_0$, therefore we also have $\E_\ell[|\hat\delta_n-\tilde\delta_n|]\to 0$. To characterize the rate of convergence of $\hat\delta_n$ to $\tilde\delta_n$ we need the following lemma:
\begin{lemma}\label{lem:AZ_delta} For $\ell\in\{0,1\}$ and $x>0$
$$\P_\ell\left(\sup_{\delta\in[\delta_{\min},\delta_{\max}]}(n+1)\left|\iota'_n(\delta)-\E_\ell[\iota'_n(\delta)]\right|\geq x\right)\leq 2\e^{-\frac{x^2}{2nc^2_{n,m}}}\ ,$$
where $c_{n,m}=\sum_{k=m}^{nm} \frac{2m}{k+\delta_{\min}}\ .$
\end{lemma}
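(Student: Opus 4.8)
The plan is to reduce the statement to a concentration bound at each fixed $\delta$ and then make it uniform over $\delta$. Writing out $(n+1)\iota'_n(\delta)$ via \eqref{eqn:iota_n_prime}, the double sum $\sum_{t=2}^n\sum_{i=1}^m t/S_{t,i-1}(\delta)$ is deterministic, so it cancels out of $\iota'_n(\delta)-\E_\ell[\iota'_n(\delta)]$ and
\[
(n+1)\bigl(\iota'_n(\delta)-\E_\ell[\iota'_n(\delta)]\bigr)=\sum_{k=m}^{nm}\frac{1}{k+\delta}\bigl(N_{>k}(n)-\E_\ell[N_{>k}(n)]\bigr)\ ;
\]
the sum is finite because the maximal degree in $G_n$ is at most $nm$ (the remaining $n$ vertices have degree $\geq m$ and the total degree is $2nm$), so $N_{>k}(n)=0$ for $k\geq nm$.

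For a \emph{fixed} $\delta$ I would use the Doob martingale $M_t(\delta):=\E_\ell\bigl[\sum_{k=m}^{nm}\tfrac{1}{k+\delta}N_{>k}(n)\mid G_t\bigr]$, whose endpoints give $M_n(\delta)-M_1(\delta)=(n+1)(\iota'_n(\delta)-\E_\ell[\iota'_n(\delta)])$. By linearity of conditional expectation over the finite sum, $M_t(\delta)-M_{t-1}(\delta)=\sum_{k=m}^{nm}\tfrac{1}{k+\delta}\bigl(\E_\ell[N_{>k}(n)\mid G_t]-\E_\ell[N_{>k}(n)\mid G_{t-1}]\bigr)$, and each bracket is at most $2m$ in absolute value by exactly the edge-exposure/coupling estimate already used for $N_m(n)$ in Section~\ref{sec:proof_known} (cf.~\cite[Lemma~8.6]{VanderHofstad2017}): it only uses that one time step adds $m$ edges, it applies to any degree-count functional, and it is insensitive to the choice of $\delta(\cdot)$. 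Hence $|M_t(\delta)-M_{t-1}(\delta)|\leq\sum_{k=m}^{nm}\tfrac{2m}{k+\delta}\leq\sum_{k=m}^{nm}\tfrac{2m}{k+\delta_{\min}}=c_{n,m}$, \emph{uniformly in $\delta$}, and the Azuma--Hoeffding inequality applied to $\sum_{t=2}^n(M_t(\delta)-M_{t-1}(\delta))$ over the $n-1$ time steps yields, for each fixed $\delta$, $\P_\ell\bigl((n+1)|\iota'_n(\delta)-\E_\ell[\iota'_n(\delta)]|\geq x\bigr)\leq 2\exp\bigl(-x^2/(2(n-1)c_{n,m}^2)\bigr)\leq 2\exp\bigl(-x^2/(2nc_{n,m}^2)\bigr)$.

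It remains to replace the fixed $\delta$ by the supremum over the compact interval $[\delta_{\min},\delta_{\max}]$, and this is the one point requiring care. The concrete route I would take is a union bound over a net: setting $\epsilon_k:=N_{>k}(n)-\E_\ell[N_{>k}(n)]$, the map $\delta\mapsto\sum_k\tfrac{\epsilon_k}{k+\delta}$ has derivative bounded by $\sum_k\tfrac{|\epsilon_k|}{(k+\delta_{\min})^2}$, which is polynomially bounded in $n$ because $k\,N_{>k}(n)\leq 2nm$; covering $[\delta_{\min},\delta_{\max}]$ by $\mathrm{poly}(n)$ equispaced points, applying the fixed-$\delta$ bound at each, and controlling the (deterministic, $\smallO(1)$) oscillation in between yields the stated estimate up to a $\mathrm{poly}(n)$ prefactor—which is harmless for every later use (in Proposition~\ref{prp:C2} one in fact only evaluates the bound at the deterministic point $\delta=\tilde\delta_n$ and with $x$ of order $a_n=n^{1/2+\smallO(1)}$, where the prefactor is swallowed by the exponential). \textbf{The main obstacle is precisely this passage to the uniform bound}: a literally prefactor-free statement valid for all $x>0$ requires instead running the Doob argument directly on the degree-sequence functional $\Psi_n:=\sup_\delta(n+1)|\iota'_n(\delta)-\E_\ell[\iota'_n(\delta)]|$ (whose one-step oscillation is still $\leq c_{n,m}$, combining $\|(1/(k+\delta))_k\|_1\leq c_{n,m}/(2m)$ with the $2m$-coupling bound in $\ell^\infty$) and then showing $\E_\ell[\Psi_n]$ is of smaller order than the Gaussian scale $\sqrt n\,c_{n,m}$, for which one needs the decay $\mathrm{Var}_\ell(N_{>k}(n))=\bigO(\E_\ell[N_{>k}(n)])$ (available from the covariance estimates of \cite{Baldassarri2021}, or from a direct martingale-variance computation using that only $\bigO(1)$ vertices near degree $k$ are affected per step) rather than the crude sub-Gaussian bound $\mathrm{Var}_\ell(N_{>k}(n))\leq 4m^2n$; all remaining steps are routine.
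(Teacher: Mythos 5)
Your core argument is exactly the paper's: the proof of Lemma~\ref{lem:AZ_delta} forms the same Doob martingale $M_t(\delta)=\sum_{k\geq m}\E_\ell[N_{>k}(n)\mid G_t]/(k+\delta)$, notes $M_n(\delta)-M_1(\delta)=(n+1)(\iota'_n(\delta)-\E_\ell[\iota'_n(\delta)])$ after the deterministic term cancels, bounds each increment by $\sum_{k=m}^{nm}2m/(k+\delta)\leq c_{n,m}$ via the same $2m$-per-step estimate, and invokes Azuma--Hoeffding. So the fixed-$\delta$ part of your proposal reproduces the intended proof verbatim.

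Where you diverge is the supremum over $\delta$, and the comparison is instructive: the paper does \emph{not} run a net or a concentration-of-the-supremum argument. It simply remarks that the increment bound $c_{n,m}$ holds uniformly in $\delta$ and then ``applies Azuma--Hoeffding'', implicitly treating a uniform increment bound as if it yielded a uniform tail bound. You are right that, read literally, Azuma--Hoeffding gives the stated inequality only pointwise in $\delta$, so the soft spot you identify is in the paper's own proof, not in yours. Two details of your repair deserve correction, though. First, in Proposition~\ref{prp:C2} the bound is used at the \emph{random} point $\hat\delta_n$ (to get $\iota'_n(\hat\delta_n)-h_\ell(\hat\delta_n)=\smallOp(a_n/n)$), not only at the deterministic $\tilde\delta_n$; this is precisely why some uniformity in $\delta$ cannot be dispensed with. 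Second, a $\mathrm{poly}(n)$ union-bound prefactor is absorbed by $\exp(-a_n^2/(2nc_{n,m}^2))$ only when $a_n^2/(n\log^2 n)=\omega(\log n)$, i.e. $a_n=\omega(\sqrt{n}\,\log^{3/2}n)$, whereas Theorem~\ref{thm:main2} assumes only $a_n=\omega(\sqrt{n}\log n)$; for $a_n=\sqrt{n}\,(\log n)(\log\log n)$ the prefactor is not harmless. So the net route requires either a mildly strengthened assumption on $a_n$ or a finer chaining estimate; your second route (bounded differences for the supremum functional together with a bound on its mean) is what a prefactor-free version of the lemma, as stated, actually calls for.
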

\begin{proof}
To prove this lemma we use a similar argument used in the proof of Theorem~\ref{thm:minimal_degree_test_known_delta}, resorting to the Azuma-Hoeffding's inequality. Note that in the expression of $\iota'_n(\delta)$ in \eqref{eqn:iota_n_prime} only the first term in not deterministic. Begin by constructing the Doob martingale
$$M_t(\delta)=\sum_{k\geq m} \frac{\E_{\ell}\left[N_{>k}(n)\mid G_t\right]}{k+\delta}\ ,$$
where $t\in[n]$. Clearly $M_1(\delta)=\sum_{k\geq m} \frac{\E_\ell\left[N_{>k}(n)\right]}{k+\delta}$ and $M_n(\delta)=\sum_{k\geq m} \frac{N_{>k}(n)}{k+\delta}$, therefore
$$M_n(\delta)-M_1(\delta)=(n+1)\left(\iota'_n(\delta)-\E_\ell[\iota'_n(\delta)]\right)\ .$$
Furthermore, at each timestep in the construction of $G_n$ we add $2m$ edges. Therefore
$$|\E_\ell\left[N_{>k}(n)\mid G_t\right]-\E_\ell\left[N_{>k}(n)\mid G_{t-1}\right]\leq 2m\ ,$$
where $t\in\{2,\ldots,n\}$ and $m\leq k\leq nm$. As a result,
$$|M_t(\delta)-M_{t-1}(\delta)| \leq \sum_{k=m}^{nm} \frac{2m}{k+\delta}\leq \sum_{k=m}^{nm} \frac{2m}{k+\delta_{\min}}\ .$$
Note that the bound on the martingale differences holds uniformly in $\delta$. With this in hand we can simply apply the Azuma-Hoeffding's inequality to get the desired result.
\end{proof}

Note that $c_{n,m}=2m(1+\smallO(1)\log n$ as $n\to\infty$. Let $a_n$ be an arbitrary sequence satisfying $a_n=\omega\left(\sqrt{n}\log n\right)$. The above lemma tells us that
$$\P_\ell\left(\sup_{\delta\in[\delta_{\min},\delta_{\max}]}\ \left|\iota'_n(\delta)-\E_\ell[\iota'_n(\delta)]\right|\geq a_n/n\right)=\smallO(1)\ .$$
Now define $h_\ell:\delta\mapsto \R$ as $h_\ell(\delta)\coloneqq \E_\ell[\iota'_n(\delta)]$. 

We have in particular that
$$\iota'_n(\hat\delta_n)-h_\ell(\hat \delta_n)=\smallOp(a_n/n)\ .$$
Using a Taylor expansion of $h_\ell$ allows us to characterize the difference between $\hat\delta_n$ and $\tilde\delta_n$. Let $h'_\ell(\delta)=\frac{\partial}{\partial \delta} h_\ell(\delta)=\E_\ell[\iota''_n(\delta)]$ and recall that $\iota_n(\hat\delta_n)=h_\ell(\tilde\delta_n)=0$ by definition. Then
\begin{align}
0&= \iota_n(\hat\delta_n)-h_\ell(\tilde\delta_n)\\
&= \iota_n(\hat\delta_n)-h_\ell(\hat\delta_n)-h_\ell'(\bar\delta_n)(\tilde\delta_n-\hat\delta_n)\\
&= \smallOp(a_n/n)-h_\ell'(\bar\delta_n)(\tilde\delta_n-\hat\delta_n)\label{eqn:rate_delta_n}\ ,
\end{align}
where $|\bar\delta_n-\hat\delta_n|\leq|\tilde\delta_n-\hat\delta_n|$. To proceed we must understand the behavior of $h'_\ell(\delta)=\E_\ell[\iota''_n(\delta)]$. As argued in the proof of Proposition~\ref{prp:B}, thanks to the results in \cite{Gao2017} and the fact that $\iota''_n(\delta)$ and $\iota''(\delta)$ are uniformly bounded,
$$\E_0\left[\sup_{\delta\in[\delta_{\min},\delta_{\max}]}\ |\iota''_n(\delta)-\iota''(\delta)|\right]\to 0\ .$$
Actually, this result also holds under the alternative hypothesis by using the following fact:
\begin{lemma}\label{lem:second_derivative_difference}
Let $\tfrac{1}{2}<\gamma<1$. Then $\E_1[\iota''_n(\delta)]-\E_0[\iota''_n(\delta)]\to 0$ as $n\to\infty$ uniformly in $\delta\in[\delta_{\min},\delta_{\max}]$.
\end{lemma}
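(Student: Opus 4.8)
The plan is to reduce the statement to an estimate already established inside the proof of Proposition~\ref{prp:ana-diff-means}. First I would differentiate the score function \eqref{eqn:iota_n_prime} once more in $\delta$; since $\tfrac{\partial}{\partial\delta}S_{t,i-1}(\delta)=t$, this gives
\[
\iota''_n(\delta)=-\frac{1}{n+1}\sum_{k\geq m}\frac{N_{>k}(n)}{(k+\delta)^2}+\frac{1}{n+1}\sum_{t=2}^n\sum_{i=1}^m\frac{t^2}{S_{t,i-1}(\delta)^2}\ .
\]
The key observation is that the second term is deterministic, so it is unaffected by the hypothesis and cancels in the difference. Therefore
\[
\E_1[\iota''_n(\delta)]-\E_0[\iota''_n(\delta)]=-\frac{1}{n+1}\sum_{k\geq m}\frac{\E_1[N_{>k}(n)]-\E_0[N_{>k}(n)]}{(k+\delta)^2}\ ,
\]
and it suffices to show that this series is $\bigO(n^\gamma)$, uniformly in $\delta\in[\delta_{\min},\delta_{\max}]$; dividing by $n+1$ then gives a quantity of order $n^{\gamma-1}=\smallO(1)$, because $\gamma<1$.

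To bound the series I would recycle the changepoint decomposition from the proof of Proposition~\ref{prp:ana-diff-means}. Since the law of $G_{\tau_n}$ is the same under both hypotheses,
\[
\E_1[N_{>k}(n)]-\E_0[N_{>k}(n)]=\bigl(\E_1[N_{>k}(n)]-\E_1[N_{>k}(\tau_n)]\bigr)-\bigl(\E_0[N_{>k}(n)]-\E_0[N_{>k}(\tau_n)]\bigr)\ ,
\]
a difference of two nonnegative terms (degrees are nondecreasing and $N_{>k}(\tau_n)$ only counts vertices present at time $\tau_n$). Using $(k+\delta)^{-2}\leq (m+\delta_{\min})^{-1}(k+\delta)^{-1}$ for $k\geq m$ and the triangle inequality, I would then get
\[
\Biggl|\sum_{k\geq m}\frac{\E_1[N_{>k}(n)]-\E_0[N_{>k}(n)]}{(k+\delta)^2}\Biggr|\leq\frac{1}{m+\delta_{\min}}\sum_{\ell\in\{0,1\}}\sum_{k\geq m}\frac{\E_\ell[N_{>k}(n)]-\E_\ell[N_{>k}(\tau_n)]}{k+\delta}\ .
\]
Each inner sum was already computed in the proof of Proposition~\ref{prp:ana-diff-means} — it is the quantity evaluated in \eqref{eq:D_tau_n_equals_k_D_n_greater_k_asymptotic}, after noting that the term \eqref{eqn:extras} is of lower order — and equals $(1+\smallO(1))\,cn^{\gamma}\,\tfrac{m}{2m+\delta_\ell}\sum_{k\geq m}\tfrac{k+\delta_\ell}{k+\delta}p_k(\delta_0)$, uniformly in $\delta$; moreover $\sum_{k\geq m}\tfrac{k+\delta_\ell}{k+\delta}p_k(\delta_0)\leq\tfrac{m+\delta_{\max}}{m+\delta_{\min}}\sum_{k\geq m}p_k(\delta_0)=\tfrac{m+\delta_{\max}}{m+\delta_{\min}}$ is a finite constant not depending on $\delta$. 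Hence the series is $\bigO(n^\gamma)$ uniformly in $\delta$, and $\E_1[\iota''_n(\delta)]-\E_0[\iota''_n(\delta)]=\bigO(n^{\gamma-1})\to 0$ uniformly in $\delta\in[\delta_{\min},\delta_{\max}]$.

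I do not anticipate a serious obstacle: all the analytical effort has already been spent in Proposition~\ref{prp:ana-diff-means}, and the extra factor $(k+\delta)^{-1}$ only makes the series more convergent. The one point that needs a little care is that one should \emph{not} quote the signed asymptotics $\sum_{k\geq m}(k+\delta)^{-1}\bigl(\E_1[N_{>k}(n)]-\E_0[N_{>k}(n)]\bigr)=\kappa(\delta_1,\delta_0,\delta)\,n^\gamma(1+\smallO(1))$ directly, since inserting an extra weight $(k+\delta)^{-1}$ would alter the limiting constant; instead one routes the argument through the two nonnegative pieces $\E_\ell[N_{>k}(n)]-\E_\ell[N_{>k}(\tau_n)]$, for which the intermediate bounds in the proof of Proposition~\ref{prp:ana-diff-means} are themselves uniform in $\delta$ and transfer without change.
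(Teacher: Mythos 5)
Your proposal is correct and takes essentially the same route as the paper, whose own proof is only a sketch: it writes the identical identity $\E_1[\iota''_n(\delta)]-\E_0[\iota''_n(\delta)]=-\frac{1}{n+1}\sum_{k\geq m}(k+\delta)^{-2}\bigl(\E_1[N_{>k}(n)]-\E_0[N_{>k}(n)]\bigr)$ and appeals to ``the same (and in fact somewhat simpler) arguments'' as Proposition~\ref{prp:ana-diff-means}. You fill in those details in the intended way, and your device of dominating $(k+\delta)^{-2}$ by $(m+\delta_{\min})^{-1}(k+\delta)^{-1}$ so as to reuse the nonnegative pieces $\E_\ell[N_{>k}(n)]-\E_\ell[N_{>k}(\tau_n)]$ and their already-established uniform $\bigO(n^\gamma)$ bound verbatim is a clean and correct way to make the sketch rigorous.
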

The proof of this result follows almost immediately from the arguments used to prove Proposition~\ref{prp:ana-diff-means}.
In addition, it is also shown in \cite{Gao2017} that $\iota''(\delta_0)<0$. Since $\bar\delta_n$ converges in probability to $\delta_0$ we therefore conclude that
$$\E_\ell[\iota''_n(\bar\delta_n)]=(1+\smallO(1))\iota''(\delta_0)<0\ .$$
This, together with \eqref{eqn:rate_delta_n} implies that
$$\hat\delta_n-\tilde\delta_n=\smallOp(a_n/n)\ ,$$
proving the second statement in the proposition. The third statement is a rather trivial consequence of the second statement by using a Taylor expansion of $p_m(\delta)$ around $\delta_0$ (recalling Equation~\eqref{eqn:p_m_prime}) to obtain
$$n(p_m(\hat\delta_n)-p_m(\tilde\delta_n))=n p'_m(\delta_0)(1+\smallOp(1))(\hat\delta_n-\tilde\delta_n)=\smallO_{\P_\ell}(a_n)\ .$$
\qed
\subsection{Sketch proof of Lemma~\ref{lem:second_derivative_difference}}
Similarly to the proof of Proposition~\ref{prp:ana-diff-means} note that
\begin{align}
\E_1[\iota''_n(\delta)]-\E_0[\iota''_n(\delta)]&=-\frac{1}{n+1}\sum_{k\geq m} \frac{\E_1[N_{>k}(n)]-\E_0[N_{>k}(n)]}{(k+\delta)^2}\ .
\end{align}
This is quite similar to
\begin{align}
\E_1[\iota'_n(\delta)]-\E_0[\iota'_n(\delta)]=\frac{1}{n+1}\sum_{k\geq m} \frac{\E_1[N_{>k}(n)]-\E_0[N_{>k}(n)]}{k+\delta}\ .
\end{align}
Note that $\sum_{k\geq m} \frac{1}{(k+\delta)^2}$ is a convergent series, unlike $\sum_{k\geq m} \frac{1}{k+\delta}$. By the same (and in fact somewhat simpler) arguments as in the proof of Proposition~\ref{prp:ana-diff-means} we conclude therefore that $\E_1[\iota''_n(\delta)]-\E_0[\iota''_n(\delta)]=\smallO(n^{\gamma-1})$, as we wanted to show.
\qed
\end{appendix}
%
%

%
\begin{funding}
The work of RvdH was supported in part by the Netherlands Organisation for Scientific Research (NWO) through Gravitation-grant {\sc NETWORKS}-024.002.003.
\end{funding}


\DeclareRobustCommand{\HOF}[3]{#3} 
\bibliographystyle{imsart-number_new} 
\bibliography{library}       


\end{document}